\renewcommand{\Re}{\ensuremath{\mathbb{R}}}
\begin{document}
\newcommand{\sg}{\mathbf{s}_g}
\newcommand{\bg}{\mathbf{b}_g}
\newcommand{\sa}{\mathbf{s}_a}
\newcommand{\ba}{\mathbf{b}_a}
\newcommand{\sgh}{\hat{\mathbf{s}}_g}
\newcommand{\bgh}{\hat{\mathbf{b}}_g}
\newcommand{\sah}{\hat{\mathbf{s}}_a}
\newcommand{\bah}{\hat{\mathbf{b}}_a}
\newcommand{\mw}{\left\|\omega\right\|}
\newcommand{\dd}{\partial}
%%\pagestyle{empty} %No headings for the first pages.
%%%%%%%%%%%%%%%%%%%%%%%%%%%%%%%%%%%%%%%%%%%%%%%%%%%%%%%%%%%%%%%
\newtheorem{ass}{Assumption}
\newtheorem{thm}{Theorem}
\newtheorem{rmk}{Remark}
\newtheorem{lem}{Lemma}
\newtheorem{deff}{Definition}

%% Title Page %%%%%%%%%%%%%%%%%%%%%%%%%%%%%%%%%%%%%%%%%%%%%%%
\pagestyle{plain}

\title{\textbf{Model Reference Adaptive Control of \\ Systems with Gain Scheduled Reference Models}}

\author{Mehrdad Pakmehr \footnote{Postdoctoral fellow at the School of Aerospace Engineering, Georgia Institute of Technology, Atlanta, GA 30332, {\small mehrdad.pakmehr@gatech.edu.}},
Tansel Yucelen \footnote{Assistant Professor at the Mechanical and Aerospace Engineering Department, Missouri University of Science and Technology, Rolla, Mo 65409, {\small yucelen@mst.edu.}}      }

\date{\null}

\maketitle
\pagestyle{plain} %Now display headings: headings / fancy / .
%%%%%%%%%%%%%%%%%%%%%%%%%%%%%%%%%%%%%%%%%%%%%%%%%%%%%%%%%%%%%%%%%%%%%%

\section*{Abstract}

Firstly, a new state feedback model reference adaptive control approach is developed for uncertain systems with gain scheduled reference models in a multi-input multi-output (MIMO) setting. Specifically, adaptive state feedback for output tracking control problem of MIMO nonlinear systems is studied and gain scheduled reference model system is used for generating desired state trajectories. Using convex optimization tools, a common Lyapunov matrix is computed for multiple linearizations near equilibrium and non-equilibrium points of the nonlinear closed loop gain scheduled reference system. This approach guarantees stability of the closed-loop gain scheduled system. Adaptive state feedback control scheme is then developed, and its stability is proven. The resulting closed-loop system is shown to have bounded solutions with bounded tracking error, with the proposed stable gain scheduled reference model. Secondly, the developed  control approach is improved for systems with constraints on the control inputs. The resulting closed-loop system is shown to have bounded solutions with bounded tracking error. Sufficient conditions for ultimate boundedness of the closed-loop system are derived. A semi-global stability result is proved with respect to the level of saturation for open-loop unstable plants while the stability result is shown to be global for open-loop stable plants. Thirdly, a decentralized adaptive state feedback control architecture is developed and its stability is proved. Specifically, the resulting closed-loop system is shown to have bounded solutions with bounded tracking error for all the subsystems with the proposed stable gain scheduled reference model. Simulation results are presented for each control architecture.

%%%%%%%%%%%%%%%%%%%%%%%%%%%%%%%%%%%%%%%%%%%%%%%%%%%%%%%%%%%%%%%%%%%%%%%%%%%%%%%%%%%%%%%%%%%%%%%%%%%%%%%%%%%%%%%%
%%%%%%%%%%%%%%%%%%%%%%%%%%%%%%%%%%%%%%%%%%%%%%%%%%%%%%%%%%%%%%%%%%%%%%%%%%%%%%%%%%%%%%%%%%%%%%%%%%%%%%%%%%%%%%%%
\section{Mathematical Preliminaries}
%%%%%%%%%%%%%%%%%%%%%%%%%%%%%%%%%%%%%%%%%%%%%%%%%%%%%%%%%%%%%%%%%%%%%%%%%%%%%%%%%%%%%%%%%%%%%%%%%%%%%%%%%%%%%%%%
%%%%%%%%%%%%%%%%%%%%%%%%%%%%%%%%%%%%%%%%%%%%%%%%%%%%%%%%%%%%%%%%%%%%%%%%%%%%%%%%%%%%%%%%%%%%%%%%%%%%%%%%%%%%%%%%

\subsection{Projection Operator}
The definitions and lemmas presented here are mainly adopted from \cite{caltechnote-lavretsky-2010, projection-lavretsky-2012, adaptive-pomet-1992}.

\begin{deff} \label{gsa-def2}
 Consider a convex compact set with a smooth boundary
\begin{equation}\label{eqn_appA1}
\displaystyle \Omega_c=\{ \theta \in R^n | f(\theta) \leq c \}, ~ 0 \leq c \leq 1,
\end{equation}
where $f: \Re^n \rightarrow \Re $ is a smooth convex function defined as
\begin{equation}\label{eqn_appA2}
\displaystyle f(\theta)=\frac{\theta^\mathsf{T} \theta - \theta^2_{max}}{\epsilon_{\theta} \theta^2_{max}},
\end{equation}
where $\theta_{max}$ is the norm bound imposed on the parameter vector $\theta$, and $\epsilon_{\theta}$ denotes the convergence tolerance of our choice. Let the true value of the parameter $\theta$, denoted by $\theta^*$, belong to $\Omega_0$, i.e. $\theta^* \in \Omega_0$, the projection operator for two vectors $\theta, y \in \Re^n$ is defined as
\begin{equation} \label{eqn_appA3}
\begin{array}{l}
\displaystyle \mathrm{Proj}(\theta, y) =  \left\{
   \begin{array}{ll}
       y-\frac{\triangledown f}{||\triangledown f||} \langle \frac{\triangledown f ^\mathsf{T} }{||\triangledown f||}, y \rangle f(\theta), & ~\mbox{if} ~f(\theta) > 0 \wedge \triangledown f ^\mathsf{T}y > 0,\\
       y,         & ~\mbox{otherwise},
   \end{array}   \right.
\end{array}
\end{equation}
where $\triangledown f(\theta)= \left( \frac{\partial f(\theta)}{\partial \theta_1},...,  \frac{\partial f(\theta)}{\partial \theta_n}  \right) \in \Re^n$ is the gradient vector of $f$ evaluated at $\theta$ and it is computed as
\begin{equation}\label{eqn_appA30}
\displaystyle \triangledown f(\theta)=\frac{2 \theta^\mathsf{T}}{\epsilon_{\theta} \theta^2_{max}},
\end{equation}
\end{deff}
Figure \ref{Proj_op} illustrates the projection operator.
\begin{figure}[!ht]
\centering
\includegraphics[width=0.8\textwidth]{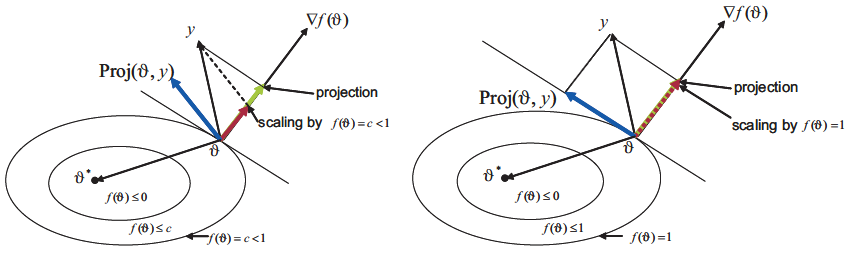}
\caption{Illustration of the projection operator \cite{AdaptiveCourse-hovakimyan-2009}.}\label{Proj_op}
\end{figure}

\begin{lem} \label{gsa-lem1}
One important property of the projection operator follows. Given $\theta^* \in \Omega_0$,
\begin{equation}\label{eqn_appA4}
\displaystyle  (\theta-\theta^*)^\mathsf{T} (\mathrm{Proj}(\theta, y)-y) \leq 0.
\end{equation}
\end{lem}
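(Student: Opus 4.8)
The plan is to read the inequality \eqref{eqn_appA4} straight off the two branches of the definition \eqref{eqn_appA3}. On the ``otherwise'' branch one has $\mathrm{Proj}(\theta,y)-y=0$, so the left-hand side of \eqref{eqn_appA4} is zero and there is nothing to prove. All the content is therefore in the first branch, where $f(\theta)>0$ and $\triangledown f^\mathsf{T}y>0$ hold simultaneously; I would treat that case explicitly.

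On the active branch, substituting the definition and collecting the scalar factors gives
\begin{equation}
\mathrm{Proj}(\theta,y)-y=-\,\frac{\triangledown f\,\triangledown f^\mathsf{T}y}{\|\triangledown f\|^{2}}\,f(\theta),
\end{equation}
hence
\begin{equation}
(\theta-\theta^*)^\mathsf{T}\big(\mathrm{Proj}(\theta,y)-y\big)=-\,\frac{\big((\theta-\theta^*)^\mathsf{T}\triangledown f\big)\,\big(\triangledown f^\mathsf{T}y\big)}{\|\triangledown f\|^{2}}\,f(\theta).
\end{equation}
On this branch the three scalars $\triangledown f^\mathsf{T}y$, $\|\triangledown f\|^{2}$ and $f(\theta)$ are all strictly positive, so the sign of the whole expression is forced to be $\le 0$ as soon as I can show that the remaining factor $(\theta-\theta^*)^\mathsf{T}\triangledown f(\theta)$ is nonnegative.

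The only substantive step, then, is to prove $(\theta-\theta^*)^\mathsf{T}\triangledown f(\theta)\ge 0$, and this is where convexity of $f$ is used. The first-order characterization of a differentiable convex function, applied at $\theta$ against $\theta^*$, gives $f(\theta^*)\ge f(\theta)+\triangledown f(\theta)^\mathsf{T}(\theta^*-\theta)$, i.e.\ $\triangledown f(\theta)^\mathsf{T}(\theta-\theta^*)\ge f(\theta)-f(\theta^*)$. Since $\theta^*\in\Omega_0$ means $f(\theta^*)\le 0$ and we are on the branch where $f(\theta)>0$, the right-hand side is strictly positive, so $\triangledown f(\theta)^\mathsf{T}(\theta-\theta^*)>0$, in particular $\ge 0$. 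Combining this with the sign bookkeeping of the previous paragraph yields \eqref{eqn_appA4}. If one prefers to avoid an abstract appeal to convexity, the same inequality follows directly from the explicit gradient \eqref{eqn_appA30}: it reduces to $\theta^\mathsf{T}\theta-(\theta^*)^\mathsf{T}\theta\ge 0$, which is immediate from Cauchy--Schwarz together with $\|\theta^*\|\le\theta_{max}<\|\theta\|$ on the active branch. I expect no genuine obstacle here; the only points requiring care are the bookkeeping of which quantities are strictly positive versus merely nonnegative, and the identification of $\Omega_0$ with the sublevel set $\{\theta:f(\theta)\le 0\}$.
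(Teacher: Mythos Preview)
Your proof is correct and follows essentially the same route as the paper: split on the two branches of the projection operator, and on the active branch use convexity of $f$ together with $\theta^*\in\Omega_0$ to fix the sign of $(\theta-\theta^*)^\mathsf{T}\triangledown f(\theta)$. Your write-up is in fact more explicit than the paper's, which simply asserts $(\theta^*-\theta)^\mathsf{T}\triangledown f(\theta)\le 0$ ``due to the convexity of $f(\theta)$'' without spelling out the first-order characterization or the alternative direct computation you give.
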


\begin{proof}
Note that $ (\theta-\theta^*)^\mathsf{T} (\mathrm{Proj}(\theta, y)-y) = (\theta^* - \theta)^\mathsf{T} (y-\mathrm{Proj}(\theta, y)) $. For $f(\theta)>0$ and $\triangledown f ^\mathsf{T} y > 0$, the left-hand side of inequality (\ref{eqn_appA4}) is
\begin{equation}\label{eqn_appA5}
\displaystyle  (\theta^* - \theta)^\mathsf{T} \left( y- \left( y- \frac{\triangledown f(\theta)(\triangledown f(\theta))^\mathsf{T} }{||\triangledown f(\theta)||^2} \right) \right),
\end{equation}
Since $\theta^* \in \Omega_0$ and due to the convexity of $f(\theta)$, we have $(\theta^* - \theta)^\mathsf{T} \triangledown f(\theta) \leq 0$. Hence
\begin{equation}\label{eqn_appA6}
\displaystyle \frac{(\theta^* - \theta)^\mathsf{T} \triangledown f(\theta) (\triangledown f(\theta))^\mathsf{T} y}{||\triangledown f(\theta)||^2} \leq 0,
\end{equation}
otherwise $\mathrm{Proj}(\theta, y)=y$.  %\hfill\(\Box\)
\end{proof}

\begin{deff} \label{gsa-def3}
The general form of the projection operator is the $n \times m$ matrix extension of the vector definition (\ref{gsa-def2}).
\begin{equation}\label{eqn_appA7}
\displaystyle \mathrm{Proj}(\Theta, Y) = [ \mathrm{Proj}(\theta_1, y_1), ..., \mathrm{Proj}(\theta_m, y_m) ],
\end{equation}
where $\Theta=[\theta_1 ... \theta_m ] \in \mathbb{R}^{n \times m}, Y=[y_1 ... y_m] \in \mathbb{R}^{n \times m}$, and $F=[f_1(\theta_1) ... f_m(\theta_m)]^\mathsf{T} \in \mathbb{R}^{m \times 1}$, then using definition (\ref{gsa-def2}) we have
\begin{equation} \label{eqn_appA8}
\begin{array}{l}
\displaystyle \mathrm{Proj}(\theta_j, y_j) =  \left\{
   \begin{array}{ll}
       y_j-\frac{\triangledown f_j}{||\triangledown f_j||} \langle \frac{\triangledown f_j ^\mathsf{T} }{||\triangledown f_j||}, y_j \rangle f_j(\theta_j), & ~\mathrm{if} ~f_j(\theta_j) > 0 \wedge \triangledown f_j ^\mathsf{T} y_j > 0,\\
       y_j,  & ~\mathrm{otherwise},
   \end{array}   \right.
\end{array}
\end{equation}
for $j=1 \text{ to } m$.
\end{deff}

\begin{lem} \label{gsa-lem2}
Let $F=[ f_1(\theta_1) ... f_m(\theta_m) ]^\mathsf{T} \in \mathbb{R}^{m \times 1}$ be a convex vector function and $\Theta=[ \theta_1 ... \theta_m ], \Theta^*=[ \theta^*_1 ... \theta^*_m ], Y=[ y_1 ... y_m ]$, where $\Theta, \Theta^*,Y \in \mathbb{R}^{n \times m}$ then,
\begin{equation}\label{eqn_appA9}
\begin{array}{l}
\displaystyle \mathrm{trace} \left\{ (\Theta-\Theta^*)^\mathsf{T} (\mathrm{Proj}(\Theta, Y)-Y) \right\} \leq 0.
\end{array}
\end{equation}
\end{lem}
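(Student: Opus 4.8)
The plan is to reduce the matrix inequality to the vector case already proved in Lemma \ref{gsa-lem1}. The key observation is that for any two matrices $A,B \in \mathbb{R}^{n\times m}$ written columnwise as $A=[a_1 \ldots a_m]$ and $B=[b_1 \ldots b_m]$, the quantity $\mathrm{trace}(A^\mathsf{T} B)$ decomposes as a sum of inner products of columns: $\mathrm{trace}(A^\mathsf{T} B)=\sum_{j=1}^m a_j^\mathsf{T} b_j$. I would apply this with $A=\Theta-\Theta^*$, whose $j$-th column is $\theta_j-\theta^*_j$, and $B=\mathrm{Proj}(\Theta,Y)-Y$, whose $j$-th column, by Definition \ref{gsa-def3}, is exactly $\mathrm{Proj}(\theta_j,y_j)-y_j$.

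Carrying this out, the first step is to expand the trace:
\begin{equation}\label{eqn_appA10}
\mathrm{trace}\left\{(\Theta-\Theta^*)^\mathsf{T}(\mathrm{Proj}(\Theta,Y)-Y)\right\} = \sum_{j=1}^m (\theta_j-\theta^*_j)^\mathsf{T}(\mathrm{Proj}(\theta_j,y_j)-y_j).
\end{equation}
The second step is to invoke Lemma \ref{gsa-lem1} termwise: since each $f_j$ is a smooth convex function of the form \eqref{eqn_appA2} and each $\theta^*_j \in \Omega_0$ (this is the standing assumption carried over from the vector case, and should be stated as a hypothesis), we have $(\theta_j-\theta^*_j)^\mathsf{T}(\mathrm{Proj}(\theta_j,y_j)-y_j)\leq 0$ for every $j=1,\ldots,m$. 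The third step is simply to note that a finite sum of nonpositive terms is nonpositive, which yields \eqref{eqn_appA9}.

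There is essentially no hard part here: the result is a bookkeeping corollary of Lemma \ref{gsa-lem1} once the columnwise structure of the trace form is recognized. The only point requiring a little care is making explicit that the $j$-th column of $\mathrm{Proj}(\Theta,Y)-Y$ coincides with $\mathrm{Proj}(\theta_j,y_j)-y_j$ — this is immediate from \eqref{eqn_appA7} but worth stating — and ensuring the hypothesis $\theta^*_j\in\Omega_0$ for each $j$ is in force so that Lemma \ref{gsa-lem1} applies to each summand. If one wanted to be thorough, one could also remark that the case split in \eqref{eqn_appA8} is handled automatically because Lemma \ref{gsa-lem1} already covers both branches (the "otherwise" branch makes the corresponding summand zero).
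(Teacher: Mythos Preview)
Your proposal is correct and follows essentially the same approach as the paper: expand the trace columnwise as $\sum_{j=1}^m (\theta_j-\theta_j^*)^\mathsf{T}(\mathrm{Proj}(\theta_j,y_j)-y_j)$ and then invoke Lemma~\ref{gsa-lem1} (equation~\eqref{eqn_appA4}) on each summand. Your write-up is in fact more careful than the paper's, since you flag the implicit hypothesis $\theta^*_j\in\Omega_0$ needed to apply Lemma~\ref{gsa-lem1} termwise.
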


\begin{proof}
Using (\ref{eqn_appA4})
\begin{equation}\label{eqn_appA10}
\begin{array}{l}
\displaystyle  \mathrm{trace} \left\{ (\Theta-\Theta^*)^\mathsf{T} (\mathrm{Proj}(\Theta, Y)-Y) \right\} 
   = \sum_{j=1}^m (\theta_j-\theta_j^*)^\mathsf{T} (\mathrm{Proj}(\theta_j, y_j)-y_j) \leq 0.
\end{array}
\end{equation}     %\hfill\(\Box\)
\end{proof}

\begin{lem} \label{gsa-lem3}
If an initial value problem, such as adaptive control algorithm with adaptive law and initial conditions, is defined by
\begin{enumerate}
  \item $\dot{\theta}=\mathrm{Proj}(\theta, y)$;
  \item $\theta(t=0)=\theta_0 \in \Omega_1$;
  \item $f(\theta): \mathbb{R}^m \rightarrow \mathbb{R}$ is convex.
\end{enumerate}
Then $\theta(t) \in \Omega_1 \forall t \geq 0$.
\end{lem}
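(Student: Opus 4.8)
The plan is to recognize that the claim is a forward-invariance statement for the sublevel set $\Omega_1=\{\theta\in\mathbb{R}^m : f(\theta)\le 1\}$ under the flow of $\dot\theta=\mathrm{Proj}(\theta,y)$, and to prove it by tracking the scalar $g(t):=f(\theta(t))$. Since $f$ is smooth and any (Carathéodory) solution $\theta(\cdot)$ is at least locally Lipschitz, $g$ is locally Lipschitz and $\dot g(t)=\triangledown f(\theta(t))^\mathsf{T}\mathrm{Proj}(\theta(t),y(t))$ for a.e.\ $t$. The first step is to evaluate this derivative from the definition (\ref{eqn_appA3}). On the region $f(\theta)>0\wedge\triangledown f^\mathsf{T}y>0$ one has $\mathrm{Proj}(\theta,y)=y-\dfrac{(\triangledown f^\mathsf{T}y)}{\|\triangledown f\|^2}\,f(\theta)\,\triangledown f$, so using $\triangledown f^\mathsf{T}\triangledown f=\|\triangledown f\|^2$ we get $\dot g=(\triangledown f^\mathsf{T}y)\bigl(1-f(\theta)\bigr)=(\triangledown f^\mathsf{T}y)(1-g)$; on the complementary region $\mathrm{Proj}(\theta,y)=y$, so $\dot g=\triangledown f^\mathsf{T}y$.

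The key estimate is then: whenever $g(t)\ge 1$, $\dot g(t)\le 0$. Indeed, $g(t)\ge 1$ forces $f(\theta(t))>0$, so either $\triangledown f^\mathsf{T}y\le 0$ and $\dot g=\triangledown f^\mathsf{T}y\le 0$, or $\triangledown f^\mathsf{T}y>0$ and $\dot g=(\triangledown f^\mathsf{T}y)(1-g)\le 0$ because $1-g\le 0$. (Note that when $f(\theta)\le 0$ we are deep inside $\Omega_0\subset\Omega_1$, so $g$ is bounded away from $1$ and this case is irrelevant to the boundary analysis; also $\triangledown f(\theta)=0$ only at $\theta=0$ by (\ref{eqn_appA30}), where $f(0)=-1/\epsilon_\theta<0$, so the projection formula is never evaluated at a degenerate gradient.)

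I would close the argument with a standard first-exit-time contradiction. Suppose $g(b)>1$ for some $b>0$. Since $\theta_0\in\Omega_1$ gives $g(0)=f(\theta_0)\le 1$ and $g$ is continuous, the set $\{t\in[0,b]:g(t)\le 1\}$ is closed and nonempty, hence contains its supremum $a$, with $a<b$; continuity and maximality of $a$ force $g(a)=1$ and $g(t)>1$ for all $t\in(a,b]$. By the key estimate, $\dot g\le 0$ a.e.\ on $[a,b]$, and since $g$ is absolutely continuous, $g(b)-g(a)=\int_a^b\dot g\,dt\le 0$, i.e.\ $g(b)\le 1$ — a contradiction. Therefore $g(t)\le 1$, equivalently $\theta(t)\in\Omega_1$, for all $t\ge 0$. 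For the matrix version one applies this componentwise as in Definition~\ref{gsa-def3}.

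The main obstacle is not conceptual but a matter of regularity bookkeeping: $\mathrm{Proj}(\cdot,y)$ is continuous and locally Lipschitz in $\theta$ but not $C^1$ across the surface $f(\theta)=0$, so one should invoke existence of solutions and work with the a.e.\ derivative of the absolutely continuous signal $g$, and make sure the comparison step only uses $\dot g\le 0$ a.e.\ together with continuity of $g$. Everything else is the short direct computation of $\triangledown f^\mathsf{T}\mathrm{Proj}(\theta,y)$ carried out above.
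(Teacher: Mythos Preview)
Your proposal is correct and follows essentially the same approach as the paper: both compute $\dot f(\theta(t))=\triangledown f(\theta)^\mathsf{T}\mathrm{Proj}(\theta,y)$ from the two cases of (\ref{eqn_appA3}), obtain $\dot f=(\triangledown f^\mathsf{T}y)(1-f)$ in the active case and $\dot f=\triangledown f^\mathsf{T}y$ otherwise, and conclude that $f$ cannot exceed $1$. The paper simply tabulates the sign of $\dot f$ in the three subcases and asserts invariance, whereas you add the regularity bookkeeping (a.e.\ derivative, absolute continuity) and a first-exit-time contradiction; this extra care is sound but not a different strategy.
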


\begin{proof}
Taking the derivative of the convex function
\begin{equation}\label{eqn_appA11}
\displaystyle \dot{f}(\theta)=(\triangledown f(\theta))^\mathsf{T} \dot{\theta}=(\triangledown f(\theta))^\mathsf{T} \mathrm{Proj}(\theta,y).
\end{equation}
Substituting (\ref{eqn_appA11}) into (\ref{eqn_appA3}) leads to
\begin{equation}\label{eqn_appA12}
\begin{array}{l}
\displaystyle    \dot{f}(\theta)= (\triangledown f(\theta))^\mathsf{T} \mathrm{Proj}(\theta,y)
    = \left\{
   \begin{array}{ll}
\displaystyle       (\triangledown f(\theta))^\mathsf{T} y (1- f(\theta)), & ~~\mathrm{if} ~f(\theta) > 0 \wedge \triangledown f ^\mathsf{T}y > 0,\\
\displaystyle       (\triangledown f(\theta))^\mathsf{T} y,                & ~~\mathrm{otherwise},
   \end{array}   \right.
\end{array}
\end{equation}
therefore
\begin{equation}\label{eqn_appA13}
\begin{array}{l}
\displaystyle    \left\{
   \begin{array}{lll}
       \dot{f}(\theta) > 0,  & ~~\mathrm{if} ~ 0<f(\theta)<1 \wedge \triangledown f ^\mathsf{T}y > 0,\\
       \dot{f}(\theta) = 0,  & ~~\mathrm{if} ~ f(\theta)=1 \wedge  \triangledown f ^\mathsf{T}y > 0,\\
       \dot{f}(\theta) < 0,  & ~~\mathrm{if} ~ f(\theta) \leq 0 \vee \triangledown f ^\mathsf{T}y \leq 0.
   \end{array}   \right.
\end{array}
\end{equation}
Thus $f(\theta_0) \leq 1 \Rightarrow f(\theta(t)) \leq 1$ for all $t \geq 0$, hence $\theta(t) \in \Omega_1$ for all $t \geq 0$.  %\hfill\(\Box\)
\end{proof}

\begin{deff} \label{gsa-def10}
\cite{robustbook-ionnou-1996, projection-lavretsky-2012} A variant of the projection algorithm, $\Gamma$-projection, updates the parameter along a symmetric positive definite gain $\Gamma$ as defined below
\begin{equation} \label{eqn_appA14}
\begin{array}{l}
\displaystyle \mathrm{Proj}_{\Gamma}(\theta, y) =  \left\{
   \begin{array}{ll}
       \Gamma y- \Gamma \frac{\triangledown f(\theta) (\triangledown f(\theta))^\mathsf{T}}{(\triangledown f(\theta))^\mathsf{T} \Gamma \triangledown f(\theta)} \Gamma y f(\theta), & ~\mathrm{if} ~f(\theta)>0 \wedge \triangledown f^\mathsf{T}\Gamma y>0,\\
       \Gamma y,  & ~\mathrm{otherwise.}
   \end{array}   \right.
\end{array}
\end{equation}
\end{deff}

\begin{lem} \label{gsa-lem10}
Given $\theta^* \in \Omega_0$, then
\begin{equation}\label{eqn_appA15}
\displaystyle  (\theta-\theta^*)^\mathsf{T} (\Gamma^{-1} \mathrm{Proj}_{\Gamma}(\theta, y)-y) \leq 0.
\end{equation}
\end{lem}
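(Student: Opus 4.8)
The plan is to follow exactly the structure used in the proof of Lemma \ref{gsa-lem1}, now carrying the gain matrix $\Gamma$ through the computation and using its symmetry and positive definiteness at the two places where Lemma \ref{gsa-lem1} used the ordinary inner product. First I would dispose of the inactive branch: when $f(\theta) \le 0$ or $(\triangledown f(\theta))^\mathsf{T}\Gamma y \le 0$, definition (\ref{eqn_appA14}) gives $\mathrm{Proj}_{\Gamma}(\theta,y) = \Gamma y$, so $\Gamma^{-1}\mathrm{Proj}_{\Gamma}(\theta,y) - y = 0$ and the left-hand side of (\ref{eqn_appA15}) is identically $0 \le 0$.

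On the active branch $f(\theta) > 0 \wedge (\triangledown f(\theta))^\mathsf{T}\Gamma y > 0$, I would substitute (\ref{eqn_appA14}) and note that the leading factor $\Gamma$ cancels against $\Gamma^{-1}$, giving
\begin{equation}
\Gamma^{-1}\mathrm{Proj}_{\Gamma}(\theta,y) - y = -\,\frac{\triangledown f(\theta)\,(\triangledown f(\theta))^\mathsf{T}\Gamma y}{(\triangledown f(\theta))^\mathsf{T}\Gamma\,\triangledown f(\theta)}\,f(\theta).
\end{equation}
Since $(\triangledown f(\theta))^\mathsf{T}\Gamma y$ and $f(\theta)$ are scalars, multiplying on the left by $(\theta-\theta^*)^\mathsf{T}$ reduces the claim to
\begin{equation}
-\,\frac{f(\theta)}{(\triangledown f(\theta))^\mathsf{T}\Gamma\,\triangledown f(\theta)}\,\big((\theta-\theta^*)^\mathsf{T}\triangledown f(\theta)\big)\big((\triangledown f(\theta))^\mathsf{T}\Gamma y\big) \le 0.
\end{equation}

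It then remains to check the sign of each factor. The factor $f(\theta)$ is positive by the branch condition, and $(\triangledown f(\theta))^\mathsf{T}\Gamma y$ is positive by the branch condition. Because $f(\theta) > 0$ forces $\theta^\mathsf{T}\theta > \theta_{max}^2 > 0$, we have $\theta \ne 0$, so by (\ref{eqn_appA30}) $\triangledown f(\theta) \ne 0$; since $\Gamma$ is symmetric positive definite this makes the denominator $(\triangledown f(\theta))^\mathsf{T}\Gamma\,\triangledown f(\theta)$ strictly positive (and in particular the quotient is well defined). Finally, convexity of $f$ together with $\theta^* \in \Omega_0$ (so $f(\theta^*) \le 0$) yields $(\theta^*-\theta)^\mathsf{T}\triangledown f(\theta) \le f(\theta^*) - f(\theta) \le -f(\theta) < 0$, i.e. $(\theta-\theta^*)^\mathsf{T}\triangledown f(\theta) \ge f(\theta) > 0$. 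The product of these four positive quantities, with the leading minus sign, is negative, which establishes (\ref{eqn_appA15}).

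I do not anticipate a genuine obstacle here: the argument is a routine computation. The only point that deserves care is to record explicitly that positive definiteness of $\Gamma$ (not merely invertibility) is what guarantees a strictly positive denominator and lets $(\triangledown f)^\mathsf{T}\Gamma y$ play the role that $\langle \triangledown f, y\rangle$ played in Lemma \ref{gsa-lem1}, together with the observation that $\triangledown f(\theta) \ne 0$ whenever the active branch is in force.
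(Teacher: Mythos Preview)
Your proposal is correct and follows essentially the same approach as the paper's proof: both split on the two branches of (\ref{eqn_appA14}), substitute the active-branch formula, and invoke convexity of $f$ with $\theta^* \in \Omega_0$ to obtain $(\theta^*-\theta)^\mathsf{T}\triangledown f(\theta) \le 0$. Your version is in fact more careful than the paper's, since you explicitly justify that the denominator $(\triangledown f(\theta))^\mathsf{T}\Gamma\,\triangledown f(\theta)$ is strictly positive (using $\triangledown f(\theta)\ne 0$ on the active branch and positive definiteness of $\Gamma$), a point the paper leaves implicit.
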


\begin{proof}
Note that $ (\theta-\theta^*)^\mathsf{T} (\Gamma^{-1} \mathrm{Proj}_{\Gamma}(\theta, y)-y) = (\theta^* - \theta)^\mathsf{T} (y- \Gamma^{-1} \mathrm{Proj}_{\Gamma}(\theta, y)) $. For $f(\theta)>0$ and $\triangledown f^\mathsf{T} \Gamma y > 0$, the left-hand side of inequality (\ref{eqn_appA14}) is
\begin{equation}\label{eqn_appA16}
\displaystyle  (\theta^* - \theta)^\mathsf{T} \left( y- \Gamma^{-1} \left( \Gamma y- \Gamma \frac{\triangledown f(\theta)(\triangledown f(\theta))^\mathsf{T} }{(\triangledown f(\theta))^\mathsf{T} \Gamma \triangledown f(\theta)} \Gamma y f(\theta)  \right) \right).
\end{equation}
Since $\theta^* \in \Omega_0$ and due to the convexity of $f(\theta)$, we have $(\theta^* - \theta)^\mathsf{T} \triangledown f(\theta) \leq 0$. Hence
\begin{equation}\label{eqn_appA17}
\displaystyle \frac{(\theta^* - \theta)^\mathsf{T} \triangledown f(\theta) (\triangledown f(\theta))^\mathsf{T} \Gamma y}{(\triangledown f(\theta))^\mathsf{T} \Gamma \triangledown f(\theta)} f(\theta) \leq 0,
\end{equation}
otherwise $\mathrm{Proj}(\theta, y)=\Gamma y$.  %\hfill\(\Box\)
\end{proof}

\begin{lem} \label{gsa-lem11}
Let $\mathrm{Proj}_{\Gamma}(\Theta, Y)$ be defined similar to Definition \ref{gsa-def3}, $F=[ f_1(\theta_1) ... f_m(\theta_m) ]^\mathsf{T} \in \mathbb{R}^{m \times 1}$ be a convex vector function and $\Theta=[ \theta_1 ... \theta_m ], \Theta^*=[ \theta^*_1 ... \theta^*_m ], Y=[ y_1 ... y_m ]$, where $\Theta, \Theta^*,Y \in \mathbb{R}^{n \times m}$ then,
\begin{equation}\label{eqn_appA18}
\begin{array}{l}
\displaystyle \mathrm{trace} \left\{ (\Theta-\Theta^*)^\mathsf{T} (\Gamma^{-1} \mathrm{Proj}_{\Gamma}(\Theta, Y)-Y) \right\} \leq 0.
\end{array}
\end{equation}
\end{lem}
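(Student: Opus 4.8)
The plan is to mimic the reduction that takes Lemma~\ref{gsa-lem1} to Lemma~\ref{gsa-lem2}, but now invoking the $\Gamma$-projection inequality of Lemma~\ref{gsa-lem10} columnwise. First I would recall that, by the matrix definition (built analogously to Definition~\ref{gsa-def3}), $\mathrm{Proj}_{\Gamma}(\Theta,Y)$ is formed column by column, so its $j$-th column is $\mathrm{Proj}_{\Gamma}(\theta_j,y_j)$. Since $\Gamma$ is a single fixed symmetric positive definite gain common to all columns and left-multiplication by a fixed matrix acts separately on each column, the $j$-th column of $\Gamma^{-1}\mathrm{Proj}_{\Gamma}(\Theta,Y)$ is $\Gamma^{-1}\mathrm{Proj}_{\Gamma}(\theta_j,y_j)$. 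The matrices $Y$ and $\Theta-\Theta^{*}$ are likewise specified columnwise.

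The second step is the standard trace identity: for matrices $A=[a_1\ \dots\ a_m]$ and $B=[b_1\ \dots\ b_m]$ with $n$ rows, $\mathrm{trace}\{A^\mathsf{T}B\}=\sum_{j=1}^{m} a_j^\mathsf{T} b_j$. Applying it with $A=\Theta-\Theta^{*}$ and $B=\Gamma^{-1}\mathrm{Proj}_{\Gamma}(\Theta,Y)-Y$ yields
\begin{equation*}
\mathrm{trace}\left\{(\Theta-\Theta^{*})^\mathsf{T}\bigl(\Gamma^{-1}\mathrm{Proj}_{\Gamma}(\Theta,Y)-Y\bigr)\right\}
=\sum_{j=1}^{m}(\theta_j-\theta_j^{*})^\mathsf{T}\bigl(\Gamma^{-1}\mathrm{Proj}_{\Gamma}(\theta_j,y_j)-y_j\bigr).
\end{equation*}
Each summand has exactly the form bounded in Lemma~\ref{gsa-lem10}. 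Under the standing hypothesis that every column $\theta_j^{*}$ of $\Theta^{*}$ lies in $\Omega_0$ (the natural product-set assumption, inherited from the scalar case and used implicitly in Lemma~\ref{gsa-lem2} as well), Lemma~\ref{gsa-lem10} gives $(\theta_j-\theta_j^{*})^\mathsf{T}\bigl(\Gamma^{-1}\mathrm{Proj}_{\Gamma}(\theta_j,y_j)-y_j\bigr)\le 0$ for each $j$. A finite sum of nonpositive reals is nonpositive, which establishes (\ref{eqn_appA18}).

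In fact there is no genuine obstacle here: the result is a bookkeeping corollary of Lemma~\ref{gsa-lem10}. The only two points that merit a line of care are (i) verifying that $\Gamma^{-1}$ distributes over the columnwise projection, which is immediate from the remark above on common gain and columnwise matrix multiplication, and (ii) making explicit the requirement $\theta_j^{*}\in\Omega_0$ for every $j$, without which the columnwise inequality cannot be invoked.
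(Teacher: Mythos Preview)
Your proof is correct and follows essentially the same approach as the paper: the paper's proof simply states that it is similar to the proof of Lemma~\ref{gsa-lem2}, i.e., decompose the trace into a sum over columns and apply the vector inequality (here Lemma~\ref{gsa-lem10}) termwise. Your write-up is in fact more explicit than the paper's, spelling out the trace identity and the columnwise action of $\Gamma^{-1}$, but the underlying argument is identical.
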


\begin{proof}
The proof is similar to the proof of Lemma \ref{gsa-lem2}.    %\hfill\(\Box\)
\end{proof}

%%%%%%%%%%%%%%%%%%%%%%%%%%%%%%%%%%%%%%%%%%%%%%%%%%%%%%%%%%%%%%%%%%%%%%%%
\subsection{Rectangular Saturation Function}

The definitions in this section are adopted from \cite{PhDThesis-jang-2009, MsThesis-schwager-2005}.

The constraints on the control inputs will be defined as a rectangular saturation function of $v$. The saturation function is given by $R_s(v)$, where the elements of $R_s$ are defined by
\begin{equation}\label{eqn_ags72}
\begin{array}{c}
\displaystyle R_{S_i}=\mathrm{sat}(v_i)=\left\{
       \begin{array}{ll}
           v_i,                 & \mathrm{if} ~|v_i| \leq v_{i, \max}, ~ i=1,...,m, \\
           v_{i, \max} \mathrm{sgn}(v_i), & \mathrm{if} ~|v_i| > v_{i, \max}.
       \end{array}
    \right.
\end{array}
\end{equation}
This saturation function can be expressed as the sum of a direction preserving component and an error component, so that
\begin{equation}\label{eqn_ags73}
\begin{array}{c}
\displaystyle R_{S}=\mathrm{sat}(v)=\left\{
       \begin{array}{ll}
           v,                       & \mathrm{if} ~||v|| \leq h(v), \\
           \bar{v}=v_{d}+\tilde{v}, & \mathrm{if} ~||v|| > h(v),
       \end{array}
    \right.
\end{array}
\end{equation}
where $v_d=\hat{e}h(v)$. $\hat{e}=v/||v||$ is the unit vector in the direction of $v$, and $h(v)$ returns the magnitude of the projection of $v$ onto the hyper-rectangle. In this formulation $v_d$ is in the same direction as $v$ and $\tilde{v}$ is an error vector. Figure \ref{Rectang_sat_fcn} illustrates the nature of $R_s$ for the case where $m=2$. It can be shown that $\tilde{v}$ is a bounded vector.
\begin{figure}[!ht]
\centering
\includegraphics[width=0.3\textwidth]{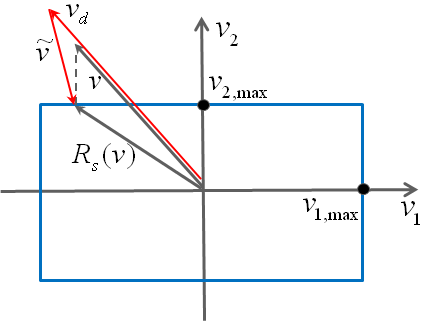}
\caption{The control input $\bar{v}$, saturated by rectangular saturation can be decomposed into $v_{d}$ and $\tilde{v}$ \cite{PhDThesis-jang-2009}. }\label{Rectang_sat_fcn}
\end{figure}

\begin{deff} \label{gsa-def4}
The function $R_s(.)$, is a multi-dimensional rectangular saturation function defined by
\begin{equation}\label{eqn_ags74}
 \begin{array}{c}
\displaystyle R_s(v) =  \left[
   \begin{array}{l}
        v_{1, \max} \mathrm{sat}\left(\frac{v_1}{v_{1, \max}} \right)\\
        .\\
        .\\
        v_{m, \max} \mathrm{sat}\left(\frac{v_m}{v_{m, \max}} \right)
   \end{array}   \right],
\end{array}
\end{equation}
where $\mathrm{sat}(.)$ for all $x \in \Re$ is given by
\begin{equation}\label{eqn_ags75}
\begin{array}{c}
\displaystyle \mathrm{sat}(x)=\left\{
       \begin{array}{ll}
           x,       & \mathrm{if} ~|x| \leq 1, \\
           \mathrm{sgn}(x),  & \mathrm{if} ~|x|  >   1.
       \end{array}
    \right.
\end{array}
\end{equation}
\end{deff}
Despite the advantages of $R_s(v)$, the direction of $R_s(v)$ is not necessarily consistent with that of $v$, which causes additional complexities in the stability analysis.

%%%%%%%%%%%%%%%%%%%%%%%%%%%%%%%%%%%%%%%%%%%%%%%%%%%%%%%%%%%%%%%%%%%%%%%%%%%%%%%%%%%%%%%%%%%%%%%%%%%%%%%%%%%%%%%%%%%%%%%%%%%%%%%%%%%%%%%%%%%%%%%%%%%%%%%%%%%%%%%%%%%%%%%%%%%%%%%%%%%%%%%%%%%%%%%%%%%%%%%%%%%%%%%%%%%%%%%%%%%%%%%%%%%%%%%%%%%%%%%%%%%%%%%%%%%%%%%%%%%%%%%%%%%%%%%%%%%%%%%%%%%%%%%%%%%%%%%%%%%%%%%%%%%%%%%%%%%%%%%%%%%%%%%%%%%%%%%%%%%%%%%%%%%%%%%%%%%%%%%%%%%%%%%%%%%%%%%%%%%%%%%%%%%%%%%%%%%%%%%%%%%%%%%%%%%%%%%%%%%%%%%%%%%%%%%%%%%%%%%%%%%%%%%%%%%%%%%%%%%%%%%%%%%%%%%%%%%%%%%%%%%%%%%%%%%%%%%%%%%%%%%%%%%%%%%%%%%%%%%%%%%%%%%%%%%%%%%%%%%%%%%%%%%%%%%%%%%%%%%%%%%%%%%%%%%%%%%%%%%%%%%%%%%%%%%%%%%%%%%%%%%%%%%%%%%%%%%%
\section{Linear Parameter Dependent Reference Model Design}
%%%%%%%%%%%%%%%%%%%%%%%%%%%%%%%%%%%%%%%%%%%%%%%%%%%%%%%%%%%%%%%%%%%%%%%%%%%%%%%%%%%%%%%%%%%%%%%%%%%%%%%%%%%%%%%
Consider the nonlinear dynamical system
\begin{equation}\label{eqn_gs1}
\begin{array}{c}
\dot{x}^p(t)= f^p(x^p(t),u(t)),\\[5pt]
y(t)=g^p(x^p(t),u(t)),
\end{array}
\end{equation}
where $x^p(t) \in \Re^n$ is the state vector, $u(t) \in \Re^m$ is the control input vector, $y(t) \in \Re^m$ is the output vector, $f^p(.)$ is an $n$-dimensional differentiable nonlinear vector function which represents the plant dynamics, and $g^p(.)$ is an $m$-dimensional differentiable nonlinear vector function which generates the plant outputs. We intend to design a feedback control such that $y(t)$ properly tracks a reference signal $r(t)$ as $t$ goes to infinity, where $r(t) \in D_r \subset \Re^m$, and $D_r$ is a compact set. For each $r \in D_r$, there is a unique pair $(x^p_e, u_e)$ that depends continuously on $r$ and satisfies the equations
\begin{equation}\label{eqn_gs2}
\begin{array}{c}
0= f^p(x^p_e,u_e),\\[3pt]
r=g^p(x^p_e,u_e),
\end{array}
\end{equation}
where $x^p_e$ is the desired equilibrium point and $u_e$ is the steady-state control that is needed to maintain equilibrium at $x^p_e$.  It is often useful to parameterize the family of system equilibria as follows:
\begin{deff} \label{def1}
The functions $x^p_e(\alpha(t)), u_e(\alpha(t))$, and $r_e(\alpha(t))$ define an equilibrium family for the plant (\ref{eqn_gs1}) on the set $\Omega$ if
\begin{equation}\label{eqn_gs222}
\begin{array}{l}
f^p(x^p_e(\alpha(t)),u_e(\alpha(t)))=0, \\[5pt]
g^p(x^p_e(\alpha(t)),u_e(\alpha(t)))=r_e(\alpha(t)),  ~\alpha \in \Omega.
\end{array}
\end{equation}
\end{deff}
\medskip
The family of plant linear models, for all $\alpha \in \Omega$ can be written as
\begin{equation}\label{eqn_gs5}
\begin{array}{l}
\delta \dot{x}^p(t) = A^p(\alpha(t)) \delta x^p(t) + B^p(\alpha(t)) \delta u(t),\\[5pt]
\delta y(t) = C^p(\alpha(t)) \delta x^p(t) + D^p(\alpha(t)) \delta u(t),
\end{array}
\end{equation}
where $\delta x^p(t) = x^p(t)-x^p_e(\alpha(t))$, $\delta y(t) = y(t)-y_e(\alpha(t))$, and $\delta u(t) = u(t)-u_e(\alpha(t))$.
$A^p(\alpha(t)),~B^p(\alpha(t)),~C^p(\alpha(t))$, and $D^p(\alpha(t))$ are the parameterized plant linearization family matrices and $x^p_e(\alpha(t)),~u_e(\alpha(t))$, and $y_e(\alpha(t))$ are the parameterized steady-state variables for the states, inputs and outputs of the plant, which form the equilibrium manifold of plant (\ref{eqn_gs1}). The subscript "$e$" stands for "steady-state" throughout this manuscript. The parameter $\alpha(t)$ is called the scheduling variable and should be measurable in real time. $\alpha(t)$ is a function of endogenous variables (i.e., depending on the plant states). Here, we defined the scheduling parameter to be the Euclidean norm of the output vector ($\alpha(t)=||y(t)||$). In order to make the design process easier \cite{GSstability-pakmehr-2013, GainSchedStabConf-pakmehr-2013}, we control the system via filtered inputs, rather than the input themselves, so there is no need for equilibrium control value other than zero (i.e. $x^c_e (\alpha(t))=0, v_e (\alpha(t))=0, \forall \alpha$). The filter is defined as $\frac{u(s)}{v(s)}=\frac{\eta_c}{s+\eta_c}$, where $\eta_c > 0$. The plant (\ref{eqn_gs1}) with the filtered inputs, and its general controller can be written as
\begin{equation} \label{eqn_gs111}
\begin{array}{l}
  \underbrace{\left[
       \begin{array}{c}
           \dot{x}^p(t) \\
           \dot{u}(t) \\
           \dot{x}^c(t)
       \end{array}
    \right]}_{\dot{x}} =
     \underbrace{\left[
       \begin{array}{c}
           f^p(x^p(t),u(t)) \\
           -\eta_c u(t)  \\
           f^c(x^c(t),g^p(x^p(t),u(t)) ,r(t))
       \end{array}
    \right]}_{f(x(t),r(t))}
     +  \underbrace{\left[
       \begin{array}{c}
           0 \\
           \eta_c I \\
           0
       \end{array}
    \right]}_{B} v(t), \\[5pt]
v(t)=\underbrace{g^c(x^c(t),g^p(x^p(t),u(t)), r(t))}_{g(x(t),r(t))},
\end{array}
\end{equation}
and the closed-loop nonlinear system is
\begin{equation}\label{eqn_gs112}
\begin{array}{l}
\dot{x}(t)= F(x(t),r(t)),
\end{array}
\end{equation}
where $x(t) \in D_x \subset \Re^{n+2m}$, and $r(t) \in D_r \subset \Re^m$. The augmented linear family of systems for the augmented plant for all $\alpha \in \Omega$ is
\begin{equation}\label{eqn_gs62}
\begin{array}{l}
  \underbrace{\left[
       \begin{array}{c}
           \delta \dot{x}^p(t) \\
           \delta \dot{u}(t)
       \end{array}
    \right]}_{ \delta \dot{x}_{\mathrm{aug}}(t)} =
     \underbrace{\left[
       \begin{array}{cc}
           A^p(\alpha(t)) &  B^p(\alpha(t))\\
              0      &  -\eta_c I
       \end{array}
    \right]}_{A_{\mathrm{aug}}(\alpha(t))}
    \underbrace{\left[
       \begin{array}{c}
           \delta x^p(t) \\
           \delta u(t)
           \end{array}
    \right]}_{\delta x_{\mathrm{aug}}(t)} 
    + \underbrace{\left[
       \begin{array}{c}
           0 \\
           \eta_c I
           \end{array}
    \right]}_{B_{\mathrm{aug}}} \delta v(t) ,\\[3pt]
\delta y = \underbrace{[C^p(\alpha(t)), D^p(\alpha(t))]}_{C_{\mathrm{aug}}(\alpha(t))}
   \underbrace{\left[
       \begin{array}{c}
           \delta x^p(t) \\
           \delta u(t)
           \end{array}
    \right]}_{\delta x_{\mathrm{aug}}(t)},
\end{array}
\end{equation}
and the controller is defined to be
\begin{equation} \label{eqn_gs65}
\begin{array}{l}
  \left[
       \begin{array}{c}
           \dot{x}^c(t) \\
            v(t)
       \end{array}
    \right] =
     \left[
       \begin{array}{ccc}
           -\epsilon_c I & I &  -I\\
           K^{\mathsf{T}}_i(\alpha(t)) & 0 & 0
       \end{array}
    \right] ~
     \left[
       \begin{array}{c}
            x^c(t) \\
            \delta y(t) \\
            \delta r(t)
       \end{array}
    \right].
\end{array}
\end{equation}
For the case where we have $\delta y(t) = \delta x^p(t)$, (i.e. $C^p(\alpha(t))=I, D^p(\alpha(t))=0$) the linearized closed-loop system (\ref{eqn_gs62}) with controller (\ref{eqn_gs65}) becomes
\begin{equation} \label{eqn_gs22}
\begin{array}{l}
  \underbrace{\left[
       \begin{array}{c}
           \delta \dot{x}^p(t) \\
           \delta \dot{u}(t) \\
            \dot{x}^c(t)
       \end{array}
    \right]}_{\delta \dot{x}(t)} =
     \underbrace{\left[
       \begin{array}{ccc}
           A^p(\alpha(t)) &~ B^p(\alpha(t)) &~ 0 \\
           0 &~ -\eta_c I & ~ \eta_c K^{\mathsf{T}}_i(\alpha(t)) \\
           I      &~ 0  &~ -\epsilon_c I
       \end{array}
    \right]}_{A_{m}(\alpha(t))} 
     \underbrace{\left[
       \begin{array}{c}
           \delta x^p(t) \\
           \delta u(t) \\
            x^c(t)
       \end{array}
    \right]}_{\delta x(t)} +\underbrace{\left[
       \begin{array}{c}
            0 \\
            0 \\
           - I
       \end{array}
    \right]}_{B_{r}} \delta r(t), ~ \forall \alpha \in \Omega.
\end{array}
\end{equation}
where $\delta x^p(t)= x^p(t)-x^p_e(\alpha(t))$, $\delta u(t)=u(t)-u_e(\alpha(t))$, $\delta y(t)=y(t)-y_e(\alpha(t))$, and $\delta r(t)=r(t)-y_e(\alpha(t))$.
\begin{rmk}\label{rmk_21}
Using pre-designed linear controllers available for important operating points of the system, $K^{\mathsf{T}}_i(\alpha(t))$ can be obtained based on a stability preserving interpolation approach described in \cite{interp-stilwell-2000} with respect to the scheduling parameter $\alpha$ in a smooth, continuous way. An approach by which the interpolated controller stabilizes the linearized plant for all $\alpha \in \Omega$. Another approach is to compute $K^{\mathsf{T}}_i(\alpha(t))$ by polynomial approximation as a function of $\alpha$.
\end{rmk}
%%%%%%%%%%%%%%%%%%%%%%%%%%%%%%%%%%%%%%%%%%%%%%%%%%%%%%%%%%%%%%%%%%%%%%%%%%%%%%%%%%%%%%%%%%%%%%%%%%%
\subsection{Stability Analysis of the Reference Model}
%%%%%%%%%%%%%%%%%%%%%%%%%%%%%%%%%%%%%%%%%%%%%%%%%%%%%%%%%%%%%%%%%%%%%%%%%%%%%%%%%%%%%%%%%%%%%%%%%%%%%%
\begin{ass} \label{gsa_ass1}
Matrix $A_{m}(\alpha(t))$ is bounded
\begin{equation}\label{eqn_gs51}
\begin{array}{l}
||A_{m}(\alpha(t))|| \leq k_A,  ~~ \alpha \in \Omega,
\end{array}
\end{equation}
where $k_A < \infty$ is a constant.
\end{ass}
\begin{rmk}\label{gsa_rmk_a1}
The feasibility of this assumption can be investigated by extensive numerical simulation studies of the physical system that is being investigated, using a high fidelity dynamic model. For systems such as gas turbine engines, it already has been investigated in \cite{PhDThesis-pakmehr-2013, GSstability-pakmehr-2013, GainSchedStabConf-pakmehr-2013}.
\end{rmk}
\begin{thm}\label{gsa_thm1}
\cite{PhDThesis-pakmehr-2013, GSstability-pakmehr-2013} Consider the closed-loop system (\ref{eqn_gs112}), and assume there is a family of equilibrium points $(x_e(t),r_e(t))$ such that $F(x_e(t),r_e(t))=0$. Define $A^{nl}_{m} = \frac{\partial F}{\partial x} \in \overline{S}, ~\forall x\in D_x $, where $\overline{S} := \{ A^{nl}_{m}, \forall x \in D_x \}$ is the set of linearizations of system (\ref{eqn_gs112}). Assume there exist symmetric positive definite matrices $P$ and $Q$, such that
\begin{equation}\label{eqn_gs54}
 P A^{nl}_{m}+A^{nl \mathsf{T}}_{m} P \leq -Q, ~~~ \forall A^{nl}_{cl} \in \overline{S},
\end{equation}
then the system (\ref{eqn_gs112}) is stable for all the trajectories defined by $A^{nl}_{cl} \in \overline{S}$. In other words, assuming the initial state is sufficiently close to some equilibrium, then the closed-loop system remains in a neighborhood of the equilibrium manifold for all $t \geq 0$.
\end{thm}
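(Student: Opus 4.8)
The plan is to construct a quadratic Lyapunov function centered on the equilibrium manifold and to reduce the stability question to the frozen‑parameter linearizations via an integral mean‑value argument. First I would introduce the manifold‑deviation coordinate $e(t) := x(t) - x_e(t)$ and the candidate $V(t) := e(t)^\mathsf{T} P e(t)$, which is positive definite and radially unbounded because $P = P^\mathsf{T} > 0$. Differentiating along (\ref{eqn_gs112}) gives $\dot V = 2 e^\mathsf{T} P \dot x - 2 e^\mathsf{T} P \dot x_e = 2 e^\mathsf{T} P F(x,r) - 2 e^\mathsf{T} P \dot x_e$.

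Since $F(x_e(t), r_e(t)) = 0$ by hypothesis, I would write $F(x,r) = F(x,r) - F(x_e,r_e)$ and apply the fundamental theorem of calculus along the segment joining $(x_e,r_e)$ to $(x,r)$, obtaining $F(x,r) = \bar A(t)\, e + \bar B(t)\,(r - r_e)$ with $\bar A(t) := \int_0^1 \frac{\partial F}{\partial x}\!\big(x_e + s e,\, r_e + s(r-r_e)\big)\, ds$ and $\bar B(t)$ the analogous average of $\partial F / \partial r$. The key observation is that whenever the trajectory is close enough to the manifold for this segment to lie in $D_x$, every integrand $\frac{\partial F}{\partial x}$ along it belongs to $\overline S$ and hence satisfies (\ref{eqn_gs54}); integrating that matrix inequality in $s$ then yields $P\bar A(t) + \bar A(t)^\mathsf{T} P \le -Q$ for all such $t$. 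Substituting, and using $\lambda_{\min}(Q) > 0$ together with the compactness of $D_x$, $D_r$ and smoothness of $F$ to bound $\|\bar B(t)\| \le \beta$, I get $\dot V \le -\lambda_{\min}(Q)\,\|e\|^2 + 2\|P\|\,\|e\|\,\big(\beta\,\|r - r_e\| + \|\dot x_e\|\big)$.

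To finish, I would bound the two drift terms near the manifold: $\|r-r_e\|$ is controlled by how far the commanded reference sits from the instantaneous equilibrium output, and $\|\dot x_e\|$ by the rate of the scheduling variable through $\dot x_e = \frac{\partial x_e}{\partial \alpha}\,\dot\alpha$, using the $C^1$ dependence of the equilibrium family on $\alpha$ (Definition \ref{def1}) and the smoothness of $\alpha(t) = \|y(t)\|$. Denoting the resulting bound by $\delta$, one has $\dot V \le -\lambda_{\min}(Q)\,\|e\|^2 + c\,\delta\,\|e\|$, which is strictly negative whenever $\|e\| > c\delta/\lambda_{\min}(Q)$. A standard invariant‑sublevel‑set argument on $V$ then shows that, for $\|e(0)\|$ small enough (the ``initial state sufficiently close to some equilibrium'' hypothesis), the sublevel set $\{V \le V(0)\}$ is forward invariant and lies inside the prescribed neighborhood of the equilibrium manifold, so $x(t)$ remains in that neighborhood for all $t \ge 0$ with the deviation ultimately confined to an $O(\delta)$ ball; when the exogenous drift vanishes this sharpens to asymptotic stability of the manifold.

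The step I expect to be the main obstacle is controlling the drift term $\dot x_e$, because the scheduling variable $\alpha = \|y\|$ is endogenous: $\dot\alpha$ depends on $\dot x = F(x,r)$, so $x_e(t)$ is not an independent slowly‑varying signal and the estimate above is implicitly self‑referential. I would resolve this with a small‑gain‑type bound — expressing $\dot\alpha$, hence $\dot x_e$, in terms of $e$ and bounded equilibrium data, then observing that the part proportional to $\|e\|$ enters $\dot V$ with a coefficient that can be absorbed into $-\lambda_{\min}(Q)\|e\|^2$ once the working neighborhood is taken small, leaving only the genuinely exogenous contribution in $\delta$. The remaining bookkeeping — keeping the mean‑value segment inside $D_x$, and checking that the sublevel set of $V$ invoked lies in the region where all the above bounds are valid — is routine.
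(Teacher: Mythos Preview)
The paper does not actually prove Theorem~\ref{gsa_thm1}; it is stated with a citation to \cite{PhDThesis-pakmehr-2013, GSstability-pakmehr-2013} and is followed directly by Remark~\ref{gsa_rmk1} with no proof block. There is therefore no in-paper argument to compare your proposal against.

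That said, your approach is the standard one for this class of results in the gain-scheduling literature (cf.\ \cite{gainsched-shamma-1988, research-rugh-2000, lmi-boyd-1994}) and is almost certainly what the cited references carry out: a quadratic Lyapunov function $V = e^{\mathsf T} P e$ in the manifold deviation, the integral mean-value representation of $F(x,r)-F(x_e,r_e)$, and the key observation that integrating the pointwise inequality~(\ref{eqn_gs54}) over the segment yields $P\bar A + \bar A^{\mathsf T} P \le -Q$ for the averaged Jacobian. Your identification of the endogenous drift $\dot x_e = (\partial x_e/\partial\alpha)\dot\alpha$ as the delicate step, and your proposed resolution by absorbing its $\|e\|$-proportional part into $-\lambda_{\min}(Q)\|e\|^2$ on a small enough neighborhood, is exactly the right bookkeeping. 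One minor point worth tightening: the set $\overline S$ in the statement is indexed only over $x\in D_x$ with the $r$-argument left implicit, so when you invoke~(\ref{eqn_gs54}) along the segment $(x_e+se,\,r_e+s(r-r_e))$ you are tacitly assuming the LMI holds uniformly over $D_x\times D_r$; that is clearly the intended reading, but it is worth saying explicitly. I see no genuine gap in the plan.
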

\begin{rmk}\label{gsa_rmk1}
In practice we cannot obtain $\overline{S}$, instead, we can linearize system (\ref{eqn_gs112}) for a large number of states $x_i$, $i=1, \ldots, L$, which we claim is sufficient to cover the set of actual operating conditions, to show the stability of the closed-loop system. Define $S:= \mathrm{Co}\{ A^{nl}_{m_1}, ..., A^{nl}_{m_L} \}$ as a matrix polytope described by its vertices, where $A^{nl}_{m_i} = \left. \frac{\partial F}{\partial x} \right|_{x=x_i} \in S$, for all $i \in \{ 1,2, ..., L \}$. Note that $A^{nl}_{m_i}$ can be obtained by linearizing the nonlinear system (\ref{eqn_gs112}) at non-equilibrium points (transient condition), and also at equilibrium points (steady state condition), which in this manuscript, are represented by $A_{m}(\alpha_i)$. Then using convex optimization tools \cite{YALMIP-lofberg-2004, sedumi-Sturm-2001}, for some matrix $Q=Q^\mathsf{T}$, we compute a single symmetric positive definite matrix $P$, such that
\begin{equation}\label{eqn_gs115}
 P A^{nl}_{m_i}+A^{nl \mathsf{T}}_{m_i} P \leq -Q, ~~ \forall i \in \{ 1,2, ..., L \}.
\end{equation}
With assumption \ref{gsa_ass1} satisfied, and the claim that $A_{m}(\alpha) \in S$, for all $\alpha \in \Omega$, then system (\ref{eqn_gs22}) is also stable for all the trajectories defined by $A_{m}(\alpha) \in S$. In the next section, we will show how to verify the above claim.
\end{rmk}
\begin{rmk}\label{rmk13}
For the purpose of stability analysis, there is a need for multiple linearizations of the closed-loop system to construct a feasible set $S$. The minimum number of required linearizations, $L$, depends on the physical system; and it changes for different dynamical systems. This knowledge usually can be obtained through an extensive numerical simulation study of the dynamical system, using a high fidelity model \cite{research-rugh-2000, feedbackControl-baumann-1986}.
\end{rmk}
\begin{lem}\label{gsa_lem1}
If matrices $P$ and $Q$ exist, such that LMI (\ref{eqn_gs115}) is satisfied, and $A_{m}(\alpha(t)) \in S$, for all $\alpha \in \Omega$, then system (\ref{eqn_gs22}) is stable.
\begin{equation}\label{eqn_gs118}
\displaystyle P A_{m}(\alpha(t)) + A^{\mathsf{T}}_{m}(\alpha(t)) P \leq -Q, ~~\forall \alpha \in \Omega.
\end{equation}
\end{lem}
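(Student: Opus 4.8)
The plan is to derive (\ref{eqn_gs118}) as a direct consequence of the polytopic LMI (\ref{eqn_gs115}) together with the membership claim $A_{m}(\alpha(t)) \in S$ for all $\alpha \in \Omega$. Since $S := \mathrm{Co}\{ A^{nl}_{m_1}, \ldots, A^{nl}_{m_L} \}$ is by definition the convex hull of the vertex matrices, the claim $A_{m}(\alpha(t)) \in S$ means precisely that for each $\alpha \in \Omega$ there exist convex weights $\lambda_i(\alpha) \geq 0$ with $\sum_{i=1}^{L} \lambda_i(\alpha) = 1$ such that $A_{m}(\alpha) = \sum_{i=1}^{L} \lambda_i(\alpha) A^{nl}_{m_i}$.

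First I would substitute this convex combination into the left-hand side of (\ref{eqn_gs118}) and use bilinearity of the map $(X) \mapsto P X + X^{\mathsf{T}} P$, obtaining
\begin{equation}\label{eqn_gslem_proof1}
P A_{m}(\alpha) + A^{\mathsf{T}}_{m}(\alpha) P = \sum_{i=1}^{L} \lambda_i(\alpha) \left( P A^{nl}_{m_i} + A^{nl \mathsf{T}}_{m_i} P \right).
\end{equation}
Next I would invoke (\ref{eqn_gs115}), which gives $P A^{nl}_{m_i} + A^{nl \mathsf{T}}_{m_i} P \leq -Q$ for each $i$; multiplying each such inequality by the nonnegative scalar $\lambda_i(\alpha)$ preserves the semidefinite ordering, and summing over $i$ with $\sum_i \lambda_i(\alpha) = 1$ yields
\begin{equation}\label{eqn_gslem_proof2}
P A_{m}(\alpha) + A^{\mathsf{T}}_{m}(\alpha) P \leq -\sum_{i=1}^{L} \lambda_i(\alpha) Q = -Q, \qquad \forall \alpha \in \Omega,
\end{equation}
which is exactly (\ref{eqn_gs118}). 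Stability of system (\ref{eqn_gs22}) then follows by the standard Lyapunov argument using $V(\delta x) = \delta x^{\mathsf{T}} P \delta x$: along trajectories of the unforced system its derivative is $\dot V = \delta x^{\mathsf{T}}(P A_{m}(\alpha) + A^{\mathsf{T}}_{m}(\alpha) P)\delta x \leq -\delta x^{\mathsf{T}} Q \delta x < 0$, and one can cite Theorem \ref{gsa_thm1} (together with Assumption \ref{gsa_ass1} guaranteeing $A_m$ stays bounded, hence the vertex set and $V$ are well-defined) to conclude.

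The main obstacle is not the algebra, which is routine, but making precise and justifying the membership claim $A_{m}(\alpha(t)) \in S$ for \emph{all} $\alpha \in \Omega$ — the convex-hull representation is only an assumption at this stage (Remark \ref{gsa_rmk1} explicitly defers its verification to "the next section"). So in the proof I would treat this as a given hypothesis of the lemma (it is stated as such), and the only real content is the convexity-preservation step: that a finite convex combination of matrices each satisfying a common LMI with the same $P$ and the same right-hand side $-Q$ again satisfies that LMI. I would also note briefly that $P$ being symmetric positive definite makes $V$ a valid Lyapunov function and that the conclusion is the same boundedness/neighborhood-of-equilibrium-manifold statement as in Theorem \ref{gsa_thm1}, rather than asymptotic stability to a point, since $A_m(\alpha(t))$ is time-varying through the endogenous scheduling signal.
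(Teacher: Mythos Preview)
Your argument is correct and is the standard convexity-preservation argument: express $A_{m}(\alpha)$ as a convex combination of the vertex matrices $A^{nl}_{m_i}$, apply the vertex LMIs (\ref{eqn_gs115}), and sum. The Lyapunov step you sketch is also the right way to pass from (\ref{eqn_gs118}) to stability of (\ref{eqn_gs22}).

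However, there is nothing to compare against: the paper states Lemma~\ref{gsa_lem1} without proof. It is presented as an immediate corollary of Theorem~\ref{gsa_thm1} and Remark~\ref{gsa_rmk1}, with the membership hypothesis $A_{m}(\alpha(t)) \in S$ taken as given (and its numerical verification deferred to Remark~\ref{gsa_rmk12} and the cited references). Your write-up therefore supplies the missing details rather than reproducing or diverging from an existing argument; the convex-combination step you give is exactly what the paper's setup in Remark~\ref{gsa_rmk1} anticipates but does not spell out.
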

\medskip
\begin{rmk}\label{gsa_rmk12}
The existence of a single matrix $P$ which guarantees the stability of a closed-loop system over some operating envelope has already been shown for dynamical systems such as gas turbine engines \cite{PhDThesis-pakmehr-2013, GSstability-pakmehr-2013, GainSchedStabConf-pakmehr-2013} and high performance aircraft \cite{PhdThesis-miotto-1997, FixedStrucA-miotto-1997, FixedStrucB-miotto-1997}. The numerical verification of the assumption in Lemma \ref{gsa_lem1}, that is the linearized plant lives in the convex hull of the linearization matrix samples, for gas turbine engines can be found in \cite{PhDThesis-pakmehr-2013, GSstability-pakmehr-2013}.
\end{rmk}
%%%%%%%%%%%%%%%%%%%%%%%%%%%%%%%%%%%%%%%%%%%%%%%%%%%%%%%%%%%%%%%%%%%%%%%%%%%%%%%%%%%%%%%%%%%%%%%%%%%%%%%%%%%%%%%%

%%%%%%%%%%%%%%%%%%%%%%%%%%%%%%%%%%%%%%%%%%%%%%%%%%%%%%%%%%%%%%%%%%%%%%%%%%%%%%%%%%%%%%%%%%%%%%%%%%%%%%%%%%%%%%%%%%%%%%%%%%%%%%%%%%%%%%%%%%%%%%%%%%%%%%%%%%%%%%%%%%%%%%%%%%%%%%%%%%%%%%%%%%%%%%%%%%%%%%%%%%%%%%
%%%%%%%%%%%%%%%%%%%%%%%%%%%%%%%%%%%%%%%%%%%%%%%%%%%%%%%%%%%%%%%%%%%%%%%%%%%%%%%%%%%%%%%%%%%%%%%%%%%%%%%%%%%%%%%%%%%%%%%%%%%%%%%%%%%%%%%%%%%%%%%%%%%%%%%%%%%%%%%%%%%%%%%%%%%%%%%%%%%%%%%%%%%%%%%%%%%%%%%%%%%%%%%%%%%%%%%%%%%%%%%%%%%%%%%%%%%%%%%%%%%%%%%%%%%%%%%%%%%%%%%%%%%%%%%%%%%%%%%%%%%%%%%%%%%%%%%%%%%%%%%%%%%%%%%%%%%%%%%%%%%%%%%%%%%%%%%%%%%%%%%%%%%%%%%%%%%%%%%%%%%%%%%%%%%%%%%%%%%%%%%%%%%%%%%%%%%%%%%%%%%%%%%%%%%%
\section{Adaptive Control of Systems with Gain Scheduled Reference Models - Part I: Basic Approach}
%%%%%%%%%%%%%%%%%%%%%%%%%%%%%%%%%%%%%%%%%%%%%%%%%%%%%%%%%%%%%%%%%%%%%%%%%%%%%%%%%%%%%%%%%%%%%%%%%%%%%%%%%%%%%%%%
%%%%%%%%%%%%%%%%%%%%%%%%%%%%%%%%%%%%%%%%%%%%%%%%%%%%%%%%%%%%%%%%%%%%%%%%%%%%%%%%%%%%%%%%%%%%%%%%%%%%%%%%%%%%%%%%
\subsection{Introduction}
%%%%%%%%%%%%%%%%%%%%%%%%%%%%%%%%%%%%%%%%%%%%%%%%%%%%%%%%%%%%%%%%%%%%%%%%%%%%%%%%%%%%%%%%%%%%%%%%%%%%%%%%%%%%%%%%

To facilitate the stability analysis of nonlinear systems with multiple equilibrium (operating) points (regions), an efficient technique is to approximate them by a linear time varying (LTV) system \cite{convergentSys-liu-1968, globalLin-liu-1969, lmi-boyd-1994}. In this section, we develop a gain scheduled reference model so that the nonlinear plant can track this reference model by using an adaptive controller. Gain scheduled reference model design and stability analysis is done using the method presented in \cite{gainsched-shamma-1988, analysis-shamma-1990, research-rugh-2000}. The scheduling variable in our reference model design process is an \textit{endogenous} parameter, which in the gas turbine engine case is a function of the gas turbine engine spool speeds.

Some of the works dedicated to the adaptive control of systems with multiple equilibrium points and with time varying reference systems are \cite{AdaptivePWstateJour-tao-2012, AdaptivePWgtmJour-tao-2012, AdaptiveGS-jang-2008, PhDThesis-jang-2009}. Adaptive control of piecewise linear systems has been developed in \cite{AdaptivePWstateJour-tao-2012, AdaptivePWgtmJour-tao-2012}. In this kind of adaptive control system, multiple linear time invariant (LTI) systems are used and transitions between these models are modeled as switches. These switchings introduce discontinuities and jumps in the control inputs. Adaptive control of time varying systems with gain scheduling is done in \cite{AdaptiveGS-jang-2008, PhDThesis-jang-2009}. The stability analysis for this system is shown using a time varying quadratic Lyapunov function, with some conditions on time varying Lyapunov matrix $P(t)$ and its rate $\dot{P}(t)$.

The contribution of this section is the development of a stable state feedback model reference adaptive control algorithm for systems with gain scheduled reference models in a MIMO setting; the approach is applicable to  systems, such as gas turbine engines, which the stability of their gain scheduled reference model is guaranteed by computing a single Lyapunov function \cite{PhDThesis-pakmehr-2013, GSstability-pakmehr-2013, GainSchedStabConf-pakmehr-2013}. A detailed stability analysis is done for the proposed state feedback adaptive control system with gain scheduled reference model, which can be used towards control software verification \cite{ContSoftware-feron-2010}.

The rest of this section is organized as follows. In subsection II, a model reference adaptive control with a gain scheduled reference model is designed with detailed stability proof. In subsection III, simulation results are shown for gain scheduled model reference adaptive control of a high fidelity physics-based nonlinear model of a JetCat SPT5 turboshaft engine. The simulations are done for two different cases including the nominal engine case, and a degraded engine due to aging. Simulation results show that the developed adaptive controller can be used effectively for the entire flight envelope of degraded turboshaft engines with guaranteed stability. Subsection IV, concludes this section.

%%%%%%%%%%%%%%%%%%%%%%%%%%%%%%%%%%%%%%%%%%%%%%%%%%%%%%%%%%%%%%%%%%%%%%%%%%%%%%%%%%%%%%%%%%%%%%%%%%%%%%%%%%%%%%%%
\subsection{Model Reference Adaptive Control}
%%%%%%%%%%%%%%%%%%%%%%%%%%%%%%%%%%%%%%%%%%%%%%%%%%%%%%%%%%%%%%%%%%%%%%%%%%%%%%%%%%%%%%%%%%%%%%%%%%%%%%%%%%%%%%%%

\subsubsection{Problem Formulation}
Consider linear parameter dependent dynamical system  for all $\alpha \in \Omega$
\begin{equation}\label{eqn_ags150}
\begin{array}{l}
  \underbrace{\left[
       \begin{array}{c}
           \delta \dot{x}^p(t) \\
           \delta \dot{u}(t)\\
           \dot{x}^c(t)
       \end{array}
    \right]}_{\delta \dot{x}(t)} =
     \underbrace{\left[
       \begin{array}{ccc}
           A^p(\alpha(t)) &  B^p(\alpha(t)) &  0\\
           0           &  -\eta_c I &  0\\
           C^p(\alpha(t)) &  D^p(\alpha(t)) & -\epsilon_c I
       \end{array}
    \right]}_{A(\alpha(t))}
    \underbrace{\left[
       \begin{array}{c}
           \delta x^p(t) \\
           \delta u(t) \\
                  x^c(t)
           \end{array}
    \right]}_{\delta x(t)}  + \underbrace{\left[
       \begin{array}{c}
           0 \\
           \eta_c I \\
           0
           \end{array}
    \right]}_{B} v(t) +
     \underbrace{\left[
       \begin{array}{c}
           0 \\
           0 \\
           -I
           \end{array}
    \right]}_{B_r} \delta r(t).
\end{array}
\end{equation}
For the case where $\delta y(t) = \delta x^p(t)$, i.e. $C^p(\alpha(t))=I, D^p(\alpha(t))=0$ we have
\begin{equation}\label{eqn_ags151}
A(\alpha)= \left[
       \begin{array}{ccc}
           A^p(\alpha(t)) &  B^p(\alpha(t))       &  0             \\
           0           &  -\eta_c I  &  0                   \\
           I           &  0                 & -\epsilon_c I
       \end{array}
            \right].
\end{equation}
For simplicity from now on we rename the variables $\delta x(t), \delta y(t)$ and $\delta r(t)$ as $\delta x(t) := x(t)$, $\delta y(t) := y(t)$ and $\delta r(t) := r(t)$. The plant (\ref{eqn_ags150}) can be written as
\begin{equation}\label{eqn_ags152}
\begin{array}{l}
\dot{x}(t) = A(\alpha(t)) x(t) + B v(t) + B_r r(t), ~~ \forall \alpha \in \Omega.
\end{array}
\end{equation}
The nominal control for this system is
\begin{equation}\label{eqn_ags51}
\begin{array}{l}
v_{nom}(t) = K^\mathsf{T}(\alpha(t)) x(t), ~~ \forall \alpha \in \Omega,
\end{array}
\end{equation}
where $K^\mathsf{T}(\alpha(t))=[0,~ 0,~ K_i^\mathsf{T}(\alpha(t))]$. The time-varying reference model is defined as
\begin{equation}\label{eqn_ags52}
\begin{array}{l}
\dot{x}_m(t) = A_m(\alpha(t)) x_m(t) + B_r r(t), ~~ \forall \alpha \in \Omega.
\end{array}
\end{equation}
In the previous section we showed the stability of this reference model. Note that $r(t) \in \Re^m$ is the command signal such that $||r(t)|| \leq r_{\max}$.

\begin{ass} \label{gsa-ass3}
There exists an ideal gain matrix $K^{*\mathsf{T}}(\alpha(t))=[0, 0, K_i^{*\mathsf{T}}(\alpha(t))] \in \Re^{m \times (n+2m)}$, that results in perfect matching between the reference model (\ref{eqn_ags52}) and the plant (\ref{eqn_ags152}) such that
\begin{equation}\label{eqn_ags153}
\begin{array}{l}
A_m(\alpha(t))=A(\alpha(t))+B K^{*\mathsf{T}}(\alpha(t)), ~~ \forall \alpha \in \Omega,
\end{array}
\end{equation}
where $A_m(\alpha(t)) \in \Re^{(n+2m) \times (n+2m)}$ and it is a Hurwitz matrix for all $\alpha \in \Omega$.
\end{ass}
\begin{rmk}\label{gsa_rmk_a3}
The feasibility of this assumption has already been verified in \cite{PhDThesis-pakmehr-2013, GSstability-pakmehr-2013, GainSchedStabConf-pakmehr-2013}, for gas turbine engine applications which we consider as the main application of this work.  For other systems, modeling and numerical studies are needed for such verification.  It is important to note that Assumption \ref{gsa-ass3} implies the existence of $K^*(\alpha(t))$ for all $\alpha \in \Omega$ such that (\ref{eqn_ags153}) holds, which is common in the adaptive control literature (see, for example, \cite{robustbook-ionnou-1996}) and not restrictive if the the pair $(A,B)$ are known to be controllable for all $\alpha \in \Omega$.
\end{rmk}
\begin{ass} \label{gsa-ass2}
Let $K^*(\alpha(t)) \in \Theta_K$ for all $\alpha \in \Omega$, where $\Theta_K$ is a known convex compact set. Note that $K^*(\alpha(t))=[K_1^*(\alpha(t)), ..., K_m^*(\alpha(t))]$, and $\Theta_K=\left\{ \Theta_{K_j}, j=1,...,m | \Theta_{K_j}=\{ \theta_{k_{ij}}, i=1,..., (n+2m) \} \right\}$. We also assume that $K^*(t)$ is continuously differentiable, and the derivative is uniformly bounded, $||\dot{K}^*(\alpha(t))|| \leq d_k < \infty$, and $||\dot{K}_j^*(\alpha(t))|| \leq d_{k_j} < \infty$ for all $\alpha \in \Omega$.
\end{ass}
\medskip
\begin{rmk}\label{gsa_rmk_a2}
$\alpha(t)$ is defined to be $\alpha(t)=||y(t)||=||x^p(t)||$; since it is a function of endogenous variables (i.e., the plant states), its boundedness is guaranteed by boundedness of the plant states. As a result, its derivative ($\dot{\alpha}(t)=\frac{{x^{p}(t)}^\mathsf{T} \dot{x}^p(t)}{||x^p(t)||}$) is also bounded. More details can be found in \cite{gainsched-shamma-1988, research-rugh-2000, PhDThesis-pakmehr-2013, GSstability-pakmehr-2013, overview-shamma-2012}.
\end{rmk}
\medskip
\begin{rmk}\label{gsa_rmk_a21}
Compact set $\Theta_K$ can be obtained by extensive numerical simulation studies of the system that the controller is being designed for. Smoothness, continuity, and differentiability of $K(\alpha(t))$, and also uniform boundedness of $\dot{K}(\alpha(t))$, can be guaranteed, by using proper design and computation process for $K(\alpha(t))$ (see Remark \ref{rmk_21}).
\end{rmk}

\subsubsection{Adaptive Control}

In order to force the plant to follow a gain scheduled reference model, the following adaptive controller is designed
\begin{equation}\label{eqn_ags54}
\begin{array}{l}
v(t)=v_{ad}(t) = \hat{K}^\mathsf{T}(t) x(t), ~~ \forall \alpha \in \Omega.
\end{array}
\end{equation}
Combining equations (\ref{eqn_ags54}) and (\ref{eqn_ags152}), we obtain closed-loop system
\begin{equation}\label{eqn_ags56}
\begin{array}{l}
\dot{x}(t)= A_m(\alpha(t)) x(t) + B \tilde{K}^\mathsf{T}(t) x(t) + B_r r(t), ~~ \forall \alpha \in \Omega, \\[3pt]
x(0)=x_0,
\end{array}
\end{equation}
where $\tilde{K}(t)=\hat{K}(t)-K^*(t)$. It is clear from $K^*(t)$, that a time-varying uncertainty is considered here, that represents unknown system parameters.  Defining $e(t)=x(t)-x_m(t)$, the error dynamics are
\begin{equation}\label{eqn_ags57}
\begin{array}{l}
\dot{e}(t)= A_m(\alpha(t)) e(t) + B \tilde{K}^\mathsf{T}(t) x(t), ~~ e(0)=0, ~~ \forall \alpha \in \Omega.
\end{array}
\end{equation}

With the knowledge of lower and upper bounds of the parameters $K^*(t)$, the parameter projection adaptive law is
\begin{equation}\label{eqn_ags58}
\begin{array}{l}
\dot{\hat{K}}(t)= \text{Proj}_{\Gamma} \left( \hat{K}(t), -x(t)e^\mathsf{T}(t) P B \right),  ~~ \hat{K}(0)=\hat{K}_0,
\end{array}
\end{equation}
where $\Gamma=\Gamma^\mathsf{T} >0$, $P=P^\mathsf{T} >0$ is a solution of LMI (\ref{eqn_gs118}), and $\text{Proj(.,.)}$ is the projection operator defined in Definition \ref{gsa-def3}. Note that the error dynamics for the controller gain, $\dot{\tilde{K}}(t)$, is given by
\begin{equation}\label{eqn_ags581}
\begin{array}{l}
\dot{\tilde{K}}(t)=\dot{\hat{K}}(t)-\dot{K}^*(t),  ~~ \tilde{K}(0)=\tilde{K}_0,
\end{array}
\end{equation}
Figure~\ref{fig:AdapGainSched_Cont_Struc} shows schematic diagram of the MRAC system with gain scheduled reference model.
\begin{figure}[!ht]
\centering
\includegraphics[width=0.7\textwidth]{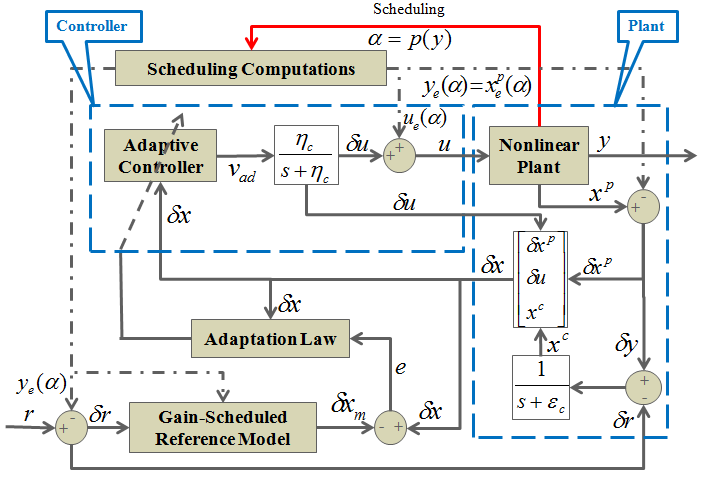}
\caption{Illustration of the model reference adaptive control with gain scheduled reference model }
\label{fig:AdapGainSched_Cont_Struc}
\end{figure}

\begin{thm} \label{gsa-thm5}
The error $e(t)$ in equation (\ref{eqn_ags57}) is bounded,
\begin{equation}\label{eqn_ags59}
\begin{array}{c}
||e(t)||_{L_{\infty}} \leq \sqrt{\frac{k_m ||\Gamma^{-1}||}{\lambda_{\min}(P)}},
\end{array}
\end{equation}
where
\begin{equation}\label{eqn_ags60}
\begin{array}{l}
 k_m :=4 \sum \limits_{j=1}^{m} \max \limits_{k^*_j(t) \in \Theta_{k_j}} ||k^*_j(t)||^2 
 + 4 \frac{\lambda_{\max}(P)}{\lambda_{\min}(Q)} ||\Gamma^{-1}||\sum \limits_{j=1}^{m} \max \limits_{k^*_j(t) \in \Theta_{k_j}} ||k^*_j(t)|| d_{k_j}.
\end{array}
\end{equation}
\end{thm}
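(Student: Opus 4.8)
The plan is to use a Lyapunov-like analysis on the composite state $(e(t), \tilde K(t))$ and exploit the projection bound from Lemma~\ref{gsa-lem11} together with the LMI~(\ref{eqn_gs118}) to show that a suitable energy function cannot grow beyond an explicit level set, from which the stated $L_\infty$ bound on $e(t)$ follows. Concretely, I would take
\begin{equation}\label{eqn_prop1}
V(e,\tilde K) = e^\mathsf{T} P e + \mathrm{trace}\!\left(\tilde K^\mathsf{T} \Gamma^{-1} \tilde K\right),
\end{equation}
which is positive definite in $(e,\tilde K)$ since $P>0$ and $\Gamma=\Gamma^\mathsf{T}>0$. Differentiating along the error dynamics~(\ref{eqn_ags57}) and the gain error dynamics~(\ref{eqn_ags581}), and using $PA_m(\alpha)+A_m^\mathsf{T}(\alpha)P \le -Q$ from Lemma~\ref{gsa_lem1}, the $e$-part contributes $\dot e^\mathsf{T} P e + e^\mathsf{T} P \dot e \le -e^\mathsf{T} Q e + 2 e^\mathsf{T} P B \tilde K^\mathsf{T} x$. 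The $\tilde K$-part contributes $2\,\mathrm{trace}(\tilde K^\mathsf{T}\Gamma^{-1}\dot{\hat K}) - 2\,\mathrm{trace}(\tilde K^\mathsf{T}\Gamma^{-1}\dot K^*)$. The first of these two terms, with the adaptive law~(\ref{eqn_ags58}), is $2\,\mathrm{trace}\!\big(\tilde K^\mathsf{T}(\Gamma^{-1}\mathrm{Proj}_\Gamma(\hat K,-x e^\mathsf{T} PB))\big)$; adding and subtracting the un-projected update $-x e^\mathsf{T} PB$ and invoking Lemma~\ref{gsa-lem11} (with $\Theta=\hat K$, $\Theta^*=K^*$, $Y=-xe^\mathsf{T}PB$) bounds it above by $2\,\mathrm{trace}(\tilde K^\mathsf{T}(-x e^\mathsf{T} PB)) = -2\,e^\mathsf{T} PB\tilde K^\mathsf{T} x$, which exactly cancels the cross term from the $e$-part. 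This is the crux of the cancellation: the projection only helps (its Lemma~\ref{gsa-lem11} inequality has the right sign), so we are left with
\begin{equation}\label{eqn_prop2}
\dot V \le -e^\mathsf{T} Q e - 2\,\mathrm{trace}\!\left(\tilde K^\mathsf{T}\Gamma^{-1}\dot K^*\right).
\end{equation}

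Next I would dominate the two surviving indefinite-sign pieces. For the quadratic term, $-e^\mathsf{T} Q e \le -\lambda_{\min}(Q)\|e\|^2$, and since $e^\mathsf{T} P e \le \lambda_{\max}(P)\|e\|^2$ we get $-e^\mathsf{T} Q e \le -\frac{\lambda_{\min}(Q)}{\lambda_{\max}(P)}\, e^\mathsf{T} P e$. For the $\dot K^*$ term, I bound $|\mathrm{trace}(\tilde K^\mathsf{T}\Gamma^{-1}\dot K^*)| \le \|\Gamma^{-1}\| \sum_{j=1}^m \|\tilde k_j\|\, d_{k_j}$ using Assumption~\ref{gsa-ass2} ($\|\dot K_j^*\|\le d_{k_j}$) and the column-wise structure. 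Crucially, the projection guarantees (via Lemma~\ref{gsa-lem3} applied column-wise) that $\hat K(t)$ stays in the compact set $\Theta_K$, hence $\|\tilde k_j(t)\| = \|\hat k_j - k_j^*\| \le 2\max_{k_j^*\in\Theta_{k_j}}\|k_j^*\|$ for all $t$; this turns the $\dot K^*$ term into the \emph{constant} bound $2\|\Gamma^{-1}\|\sum_{j}\max_{k_j^*}\|k_j^*\|\,d_{k_j}$. Likewise the trace term in $V$ itself satisfies $\mathrm{trace}(\tilde K^\mathsf{T}\Gamma^{-1}\tilde K) \le \|\Gamma^{-1}\|\sum_j\|\tilde k_j\|^2 \le 4\|\Gamma^{-1}\|\sum_j\max_{k_j^*}\|k_j^*\|^2$, which is a constant I will call $c_K$.

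Combining, $\dot V \le -\frac{\lambda_{\min}(Q)}{\lambda_{\max}(P)}\, e^\mathsf{T}Pe + c_d$ where $c_d = 4\|\Gamma^{-1}\|\sum_j \max_{k_j^*}\|k_j^*\|\,d_{k_j}$. Since $e^\mathsf{T}Pe = V - \mathrm{trace}(\tilde K^\mathsf{T}\Gamma^{-1}\tilde K) \ge V - c_K$, this yields $\dot V \le -\frac{\lambda_{\min}(Q)}{\lambda_{\max}(P)}(V - c_K) + c_d$, so $V$ decreases whenever $V > c_K + \frac{\lambda_{\max}(P)}{\lambda_{\min}(Q)}c_d =: V^\star$. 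A standard comparison/invariant-set argument then gives $V(t) \le \max\{V(0),V^\star\}$, and I would check that the definition of $k_m$ in~(\ref{eqn_ags60}) is precisely $V^\star$ (with the initial-condition term handled by the hypothesis $e(0)=0$ and $\hat K(0)\in\Theta_K$, so that $V(0)\le c_K\le V^\star$), hence $V(t)\le k_m\|\Gamma^{-1}\|$ — note the $\|\Gamma^{-1}\|$ factor is folded into the normalization of $V$'s trace term in their convention. Finally $\lambda_{\min}(P)\|e(t)\|^2 \le e^\mathsf{T}Pe \le V(t) \le k_m\|\Gamma^{-1}\|$ gives $\|e(t)\|\le \sqrt{k_m\|\Gamma^{-1}\|/\lambda_{\min}(P)}$, which is~(\ref{eqn_ags59}).

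The main obstacle I anticipate is bookkeeping rather than conceptual: matching the constants in~(\ref{eqn_ags60}) exactly — in particular tracking where each factor of $\|\Gamma^{-1}\|$, $\lambda_{\max}(P)$, $\lambda_{\min}(Q)$, and the factor $4$ enters, and confirming the column-wise $\|\tilde k_j\| \le 2\max\|k_j^*\|$ step is legitimate (it requires $\Theta_K$ to be balanced-like, or at least that both $\hat k_j$ and $k_j^*$ lie in a set of radius $\max\|k_j^*\|$, which follows from the projection keeping $\hat K$ in $\Theta_K$). A secondary subtlety is that $\alpha(t)=\|x^p(t)\|$ is endogenous, so $A_m(\alpha(t))$ is genuinely time-varying along the trajectory; the LMI~(\ref{eqn_gs118}) must be invoked pointwise in $t$, which is valid precisely because it holds for all $\alpha\in\Omega$ and (by the claim in Remark~\ref{gsa_rmk1}) the realized $\alpha(t)$ stays in $\Omega$. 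One should also note this argument shows boundedness of $e$ but not of $x$ directly; $x = e + x_m$ is bounded because $x_m$ is bounded by the reference-model stability of Lemma~\ref{gsa_lem1} together with bounded $r(t)$, which keeps $\alpha(t)$ bounded and closes the loop self-consistently.
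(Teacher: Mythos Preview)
Your proposal is correct and follows essentially the same approach as the paper: the same Lyapunov function~(\ref{eqn_prop1}), the same cancellation via Lemma~\ref{gsa-lem11} and the LMI~(\ref{eqn_gs118}) to reach~(\ref{eqn_prop2}), and the same projection-based uniform bound on $\mathrm{trace}(\tilde K^\mathsf{T}\Gamma^{-1}\tilde K)$ to conclude level-set invariance. The only cosmetic difference is that the paper phrases the final step as a contradiction (``if $V(t_1)>k_m\|\Gamma^{-1}\|$ then $\dot V(t_1)<0$'') rather than your explicit differential-inequality form $\dot V \le -\tfrac{\lambda_{\min}(Q)}{\lambda_{\max}(P)}(V-c_K)+c_d$, but these are equivalent, and your anticipated bookkeeping of the $\|\Gamma^{-1}\|$ factors is exactly what is needed to match the constant $k_m$ in~(\ref{eqn_ags60}).
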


\begin{proof}
A Lyapunov candidate function chosen as
\begin{equation}\label{eqn_ags61}
\begin{array}{l}
V (e(t),\tilde{K}(t))= e^\mathsf{T}(t) P e(t) + \text{trace} \left( \tilde{K}(t)^\mathsf{T} \Gamma^{-1} \tilde{K}(t) \right),
\end{array}
\end{equation}
where its time-derivative for all $\alpha \in \Omega$ is given by
\begin{equation}\label{eqn_ags62}
\begin{array}{l}
\dot{V}(.)= e^\mathsf{T}(t) \left( A_m^\mathsf{T}(\alpha(t)) P + P A_m(\alpha(t)) \right) e(t) 
 + 2e^\mathsf{T}(t) P B \tilde{K}^\mathsf{T}(t) x(t)
+ 2 \text{trace} \left( \tilde{K}^\mathsf{T}(t) \Gamma^{-1} \dot{\tilde{K}}(t) \right).
\end{array}
\end{equation}
Using Lemma \ref{gsa_lem1}, applying trace identity (valid for any two co-dimensional vectors a and b: $a^\mathsf{T}b=\text{trace} \left( b a^\mathsf{T} \right)$), and letting $Y_K(t)=-x(t) e^\mathsf{T}(t) P B$, and knowing $\dot{\tilde{K}}(t)=\dot{\hat{K}}(t)-\dot{K}^*(t)$ leads to
\begin{equation}\label{eqn_ags63}
\begin{array}{l}
\dot{V}(.)\leq -e^\mathsf{T}(t) Q e(t)- 2 \text{trace} \left( \tilde{K}^\mathsf{T}(t) \Gamma^{-1} \dot{K}^*(t) \right) \\[3pt]
~~~~~~~~~~ + 2 \text{trace} \left( \tilde{K}^\mathsf{T}(t) \left[ \Gamma^{-1} \text{Proj}_{\Gamma}(\hat{K}(t), Y_K(t))  - Y_K(t) \right] \right)
\end{array}
\end{equation}
Using Lemma \ref{gsa-lem11}
\begin{equation}\label{eqn_ags64}
\begin{array}{l}
\dot{V}(.) \leq -e^\mathsf{T}(t) Q e(t) - 2 \text{trace} \left( \tilde{K}^\mathsf{T}(t) \Gamma^{-1} \dot{K}^*(t) \right).
\end{array}
\end{equation}
In Essence, the projection operator ensures that the columns $\hat{K}_j$ of the adaptive parameter matrix $\hat{K}(t)$ do not exceed their pre-specified bounds $\hat{K}^{max}_j$, hence for all $t \geq 0,$
\begin{equation}\label{eqn_ags65}
\begin{array}{l}
 \sum \limits_{j=1}^{m} \max \limits_{t \geq 0} \left( \tilde{K}_j^\mathsf{T}(t) \Gamma^{-1} \tilde{K}_j(t) \right) \leq 4 ||\Gamma^{-1}|| \sum \limits_{j=1}^{m} \max \limits_{k^*_j \in \Theta_{k_j}} ||k^*_j(t)||^2.
\end{array}
\end{equation}
If at some $t_1 > 0$, one has $V(e(t_1),\tilde{K}(t_1)) > k_m ||\Gamma^{-1}||$, then it follows from (\ref{eqn_ags60}) and (\ref{eqn_ags61}) that
\begin{equation}\label{eqn_ags66}
\begin{array}{l}
 e^\mathsf{T}(t_1) P e(t_1) > 4 \frac{\lambda_{\max}(P)}{\lambda_{\min}(Q)} ||\Gamma^{-1}||\sum \limits_{j=1}^{m} \max \limits_{k^*_j \in \Theta_{k_j}} ||k^*_j(t)|| d_{k_j},
\end{array}
\end{equation}
and then
\begin{equation}\label{eqn_ags67}
\begin{array}{l}
 e^\mathsf{T}(t_1) (\lambda_{\min}(Q) I) e(t_1) \geq \frac{\lambda_{\min}(Q)}{\lambda_{\max}(P)} e^\mathsf{T}(t_1) P e(t_1) \\[5pt]
~~~~~~~~~~~~~~~~~~~~~~~~~~~~~ > 4 ||\Gamma^{-1}||\sum \limits_{j=1}^{m} \max \limits_{k^*_j \in \Theta_{k_j}} ||k^*_j(t)|| d_{k_j}.
\end{array}
\end{equation}
Notice that $|\text{trace} \left( \tilde{K}^\mathsf{T}(t) \Gamma^{-1} \dot{K}^*(t) \right)| \leq 2 ||\Gamma^{-1}||\sum \limits_{j=1}^{m} \max \limits_{k^*_j \in \Theta_i} ||k^*_j(t)|| d_{k_j}$, $\forall t \geq 0$, which along with the bound in (\ref{eqn_ags67}) leads to $\dot{V}(e(t_1),\tilde{K}(t_1)) < 0$. Since $x_m(0)=x(0)$, we can verify that $V(e(0),\tilde{K}(0)) \leq 4 ||\Gamma^{-1}|| \\ \sum \limits_{j=1}^{m} \max \limits_{k^*_j \in \Theta_{k_j}} ||k^*_j(t)||^2 < k_m ||\Gamma^{-1}||$. Therefore, $V(e(t),\tilde{K}(t)) \leq k_m ||\Gamma^{-1}||$, $\forall t \geq 0$. Since $\lambda_{\min}(P) ||e(t)||^2 \leq e^\mathsf{T}(t) P e(t) \leq V(e(t),\tilde{K}(t))$, then $ ||e(t)|| \leq \sqrt{\frac{k_m ||\Gamma^{-1}||}{\lambda_{\min}(P)}}$. The result in (\ref{eqn_ags59}) follows from the fact that this bound holds uniformly for all $t \geq 0$.
\end{proof}

%%%%%%%%%%%%%%%%%%%%%%%%%%%%%%%%%%%%%%%%%%%%%%%%%%%%%%%%%%%%%%%%%%%%%%%%%%%%%%%%%%%%%%%%%%%%%%%%%%%%%%%%%%%%%%%%
%%%%%%%%%%%%%%%%%%%%%%%%%%%%%%%%%%%%%%%%%%%%%%%%%%%%%%%%%%%%%%%%%%%%%%%%%%%%%%%%%%%%%%%%%%%%%%%%%%%%%%%%%%%%%%%%
\subsection{Turboshaft Engine Control Study}
%%%%%%%%%%%%%%%%%%%%%%%%%%%%%%%%%%%%%%%%%%%%%%%%%%%%%%%%%%%%%%%%%%%%%%%%%%%%%%%%%%%%%%%%%%%%%%%%%%%%%%%%%%%%%%%%

We apply the developed adaptive controller to a high fidelity physics-based model of JetCat SPT5 turboshaft engine driving a variable pitch propeller developed in \cite{fitzgerald-model-2013, pakmehr-decentmodel-2011}. The effect of engine degradation due to aging is modeled in the nonlinear simulation by modifying the efficiencies and flow capacities of key engine components such as: High Pressure Compressor (HPC), High Pressure Turbine (HPT) and Low Pressure Turbine (LPT). These efficiency and flow capacity parameters are known as engine health parameters, and the values of these parameters used in this simulation corresponding to moderate degradation are $\eta_{hpc}=-1.470 \%$, $W_{c,hpc}=-2.455 \%$, $\eta_{hpt}=-1.315 \%$, $W_{c,hpt}=+0.880 \%$, $\eta_{lpt}=-0.269 \%$, and $W_{c,lpt}=+0.1294 \%$, where their nominal values are zero. For a standard day at sea level condition we chose three properly separated equilibrium points on the nominal plant equilibrium manifold for linearizing the plant model at those points. The linearization matrices for these equilibrium points and steady state values of the engine variables, scheduling parameter and control parameters are given as follows:
\begin{itemize}
\item Equilibrium Point 1 (Cruise):\\
	$u_{1e1}=0.4685,~ u_{2e1}=16 ~(\text{deg}),~ x_{1e1}=0.7264,~ x_{2e1}=0.5,~ T_{e1}=70.5125 ~(\text{N}),~ \alpha_1=0.8818,$ and
\begin{eqnarray} \label{eqn_agsf3}
%\begin{split}
\begin{array}{c}
  A^p_1=
   \left[
       \begin{array}{cc}
           -1.7 & 0.1 \\
            0.6 & -1.1
       \end{array}
    \right],~ B^p_1=
    \left[
       \begin{array}{cc}
           1.2 & 0 \\
           0.3 & -0.023
       \end{array}
       \right], \\[5pt]
        C^p_1=I, ~   
				K_{i1}= \left[
       \begin{array}{cc}
           -0.4 & -0.4 \\
           -0.4 & -0.4
       \end{array}
    \right].
\end{array}
\end{eqnarray}

\item Equilibrium Point 2:\\
	$u_{1e2}=0.3,~ u_{2e2}=16 ~(\text{deg}),~ x_{1e2}=0.5327,~ x_{2e2}=0.3678,~ T_{e2}=38.155 ~(\text{N}),~ \alpha_2=0.6473,$ and
\begin{eqnarray} \label{eqn_agsf4}
\begin{array}{c}
  A^p_2=
   \left[
       \begin{array}{cc}
           -0.85 & 0.032 \\
            0.32 & -0.64
       \end{array}
    \right],~ B^p_2=
    \left[
       \begin{array}{cc}
           1.0 & 0 \\
           0.17 & -0.011
       \end{array}
       \right], \\[5pt]
       C^p_2=I, ~
   K_{i2}=
   \left[
       \begin{array}{cc}
           -0.3 & -0.3 \\
           -0.3 & -0.3
       \end{array}
    \right].
\end{array}
%\end{split}
\end{eqnarray}
\item Equilibrium Point 3 (Idle):\\
	$u_{1e3}=0.145,~ u_{2e3}=16 ~(\text{deg}),~ x_{1e3}=0.295,~ x_{2e3}=0.161,~ T_{e3}=7.317 ~(\text{N}),~ \alpha_3=0.3361,$ and the matrices are
\begin{eqnarray} \label{eqn_agsf5}
%\begin{split}
\begin{array}{c}
  A^p_3=
   \left[
       \begin{array}{cc}
           -0.38 & -0.0008 \\
            0.26  & -0.34
       \end{array}
    \right], ~ B^p_3=
    \left[
       \begin{array}{cc}
           0.7 & 0 \\
           0.1 & -0.0024
       \end{array}
       \right], \\[5pt]
       C^p_3= I, ~
   K_{i3}=
   \left[
       \begin{array}{cc}
           -0.2 & -0.2 \\
           -0.2 & -0.2
       \end{array}
    \right].
\end{array}
%\end{split}
\end{eqnarray}
\end{itemize}

Other controller parameters are $\epsilon_c=1,~\eta_c=3$. To show the stability of the closed loop reference system, 40 different (30 equilibrium, and 10 non-equilibrium) linearizations have been used, to solve inequality (\ref{eqn_gs115}), in Matlab with the aid of YALMIP \cite{YALMIP-lofberg-2004} and SeDuMi \cite{sedumi-Sturm-2001} packages. The numerical value for the common matrix $P$ is
\begin{eqnarray*}
P = \left[
    \begin{array}{cccccc}
        0.491  &  0.079  &  0.102  & -0.004  & -0.072 & -0.039\\
        0.079  &  0.446  &  0.053  &  0.007  & -0.097 & -0.013\\
        0.102  &  0.053  &  0.181  & -0.041  & -0.028 & -0.022\\
       -0.004  &  0.007  & -0.041  &  0.130  &  0.023 &  0.013\\
       -0.072  & -0.097  & -0.028  &  0.023  &  0.321 &  0.045\\
       -0.039  & -0.013  & -0.022  &  0.013  &  0.045 &  0.332
    \end{array}
    \right],
\end{eqnarray*}
where its condition number is $\kappa(P)=6.6303$, and $Q=0.1\times I_6$. Figure \ref{comp1_map_stab_gsa} shows JetCat SPT5 turboshaft engine compressor map. In this map the approximate stall line and also the operating line for this simulation have been shown. The engine operates in a safe region with a big stall margin during its acceleration from idle to cruise condition. The points which are used for linearization and stability analysis of the closed-loop system also have been shown in this figure. Out of 40 points, 30 are related to the equilibrium linearizations which are situated on the steady-state operating line of the engine, and the other 10 points are related to the non-equilibrium linearizations which are situated near the steady-state operating line of the engine. The engine operating lines for the nominal and degraded engine models are shown in this figure.
\begin{figure}[!ht]
\centering
\includegraphics[width=0.7\textwidth]{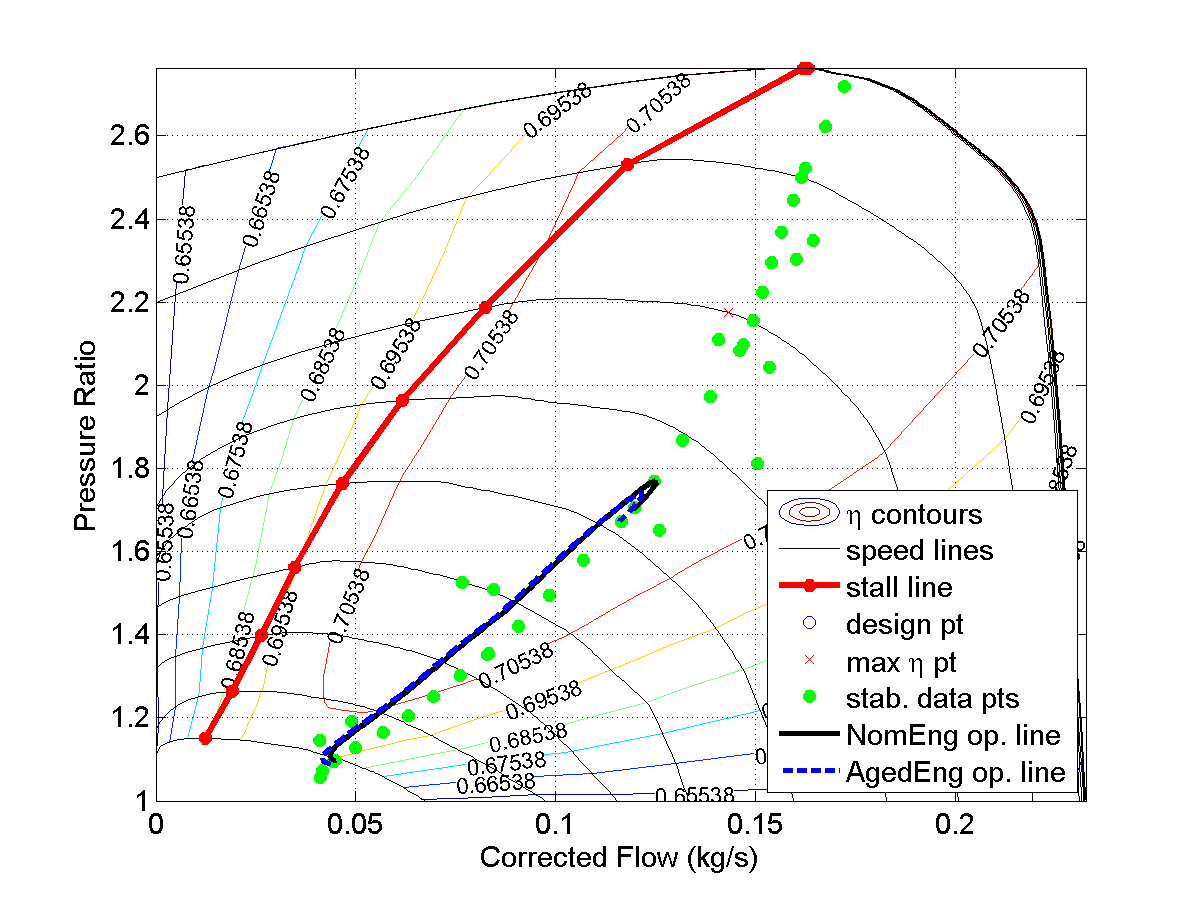}
\caption{JetCat SPT5 engine compressor map with data points used to compute $P$ and operating line for nominal engine, and deteriorated engine.}\label{comp1_map_stab_gsa}
\end{figure}

Based on the dynamical properties of the gas turbine engine system, the numerical values for the adaptive controller are set as follows
\begin{equation} \label{eqn_agsf6}
\begin{array}{c}
  \Gamma =\text{diag}([100,~100,~100,~100,~100,~100]), \\[5pt]
     K^*_i \in \Theta_{K_i}= \left[
       \begin{array}{cc}
          $[-2, ~ 0]$   &  $[-2, ~ 0]$ \\
          $[-2, ~ 0]$   &  $[-2, ~ 0]$
       \end{array}
    \right], \\[5pt]
    \hat{K}_i(0)= \left[
       \begin{array}{cc}
          -0.1950   &  -0.1950 \\
          -0.1970   &  -0.1970
       \end{array}
    \right].
\end{array}
\end{equation}
Simulation results are shown in Figures \ref{fig1_gsa_01} to \ref{fig1_gsa_15}. Two different simulations included in these pictures include the nominal engine (NomEng) and degraded/aged engine (AgedEng) cases.
\begin{figure}[!ht]
\centering
\begin{minipage}[b]{0.48\textwidth}
\centering
\includegraphics[width=0.99\textwidth]{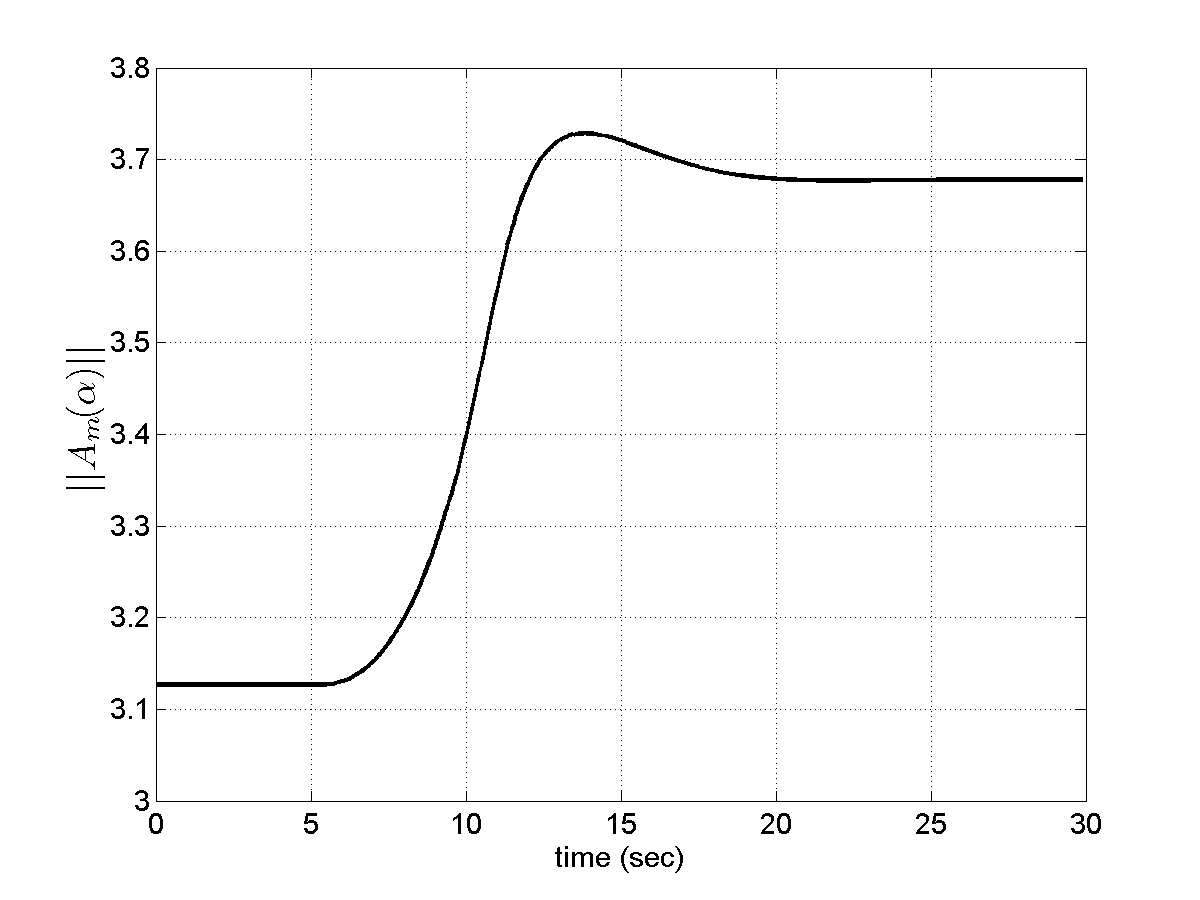}
\caption{Norm of reference model matrix ($||A_{m}(\alpha(t))||$) } \label{fig1_gsa_01}
\end{minipage}
\hfill
\begin{minipage}[b]{0.48\textwidth}
\centering
\includegraphics[width=0.99\textwidth]{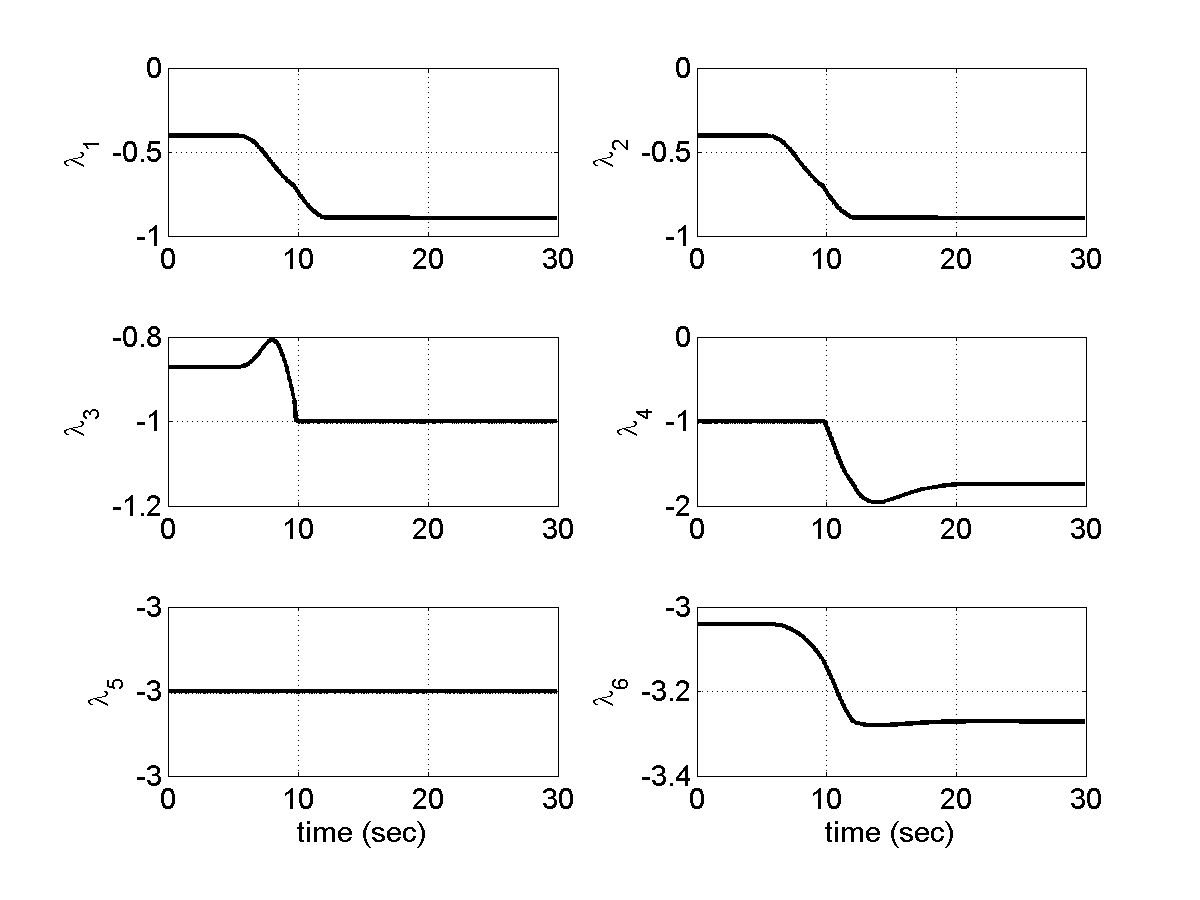}
\caption{Closed-loop system eigenvalues ($\lambda[A_{m}(\alpha(t))]$)} \label{fig1_gsa_02}
\end{minipage}
\end{figure}
Figure \ref{fig1_gsa_01} shows the history of the norm of desired reference system matrix $||A_{m}(\alpha(t))||$. As it can be seen the figure shows the boundedness of these two matrices, in accordance with Assumption \ref{gsa_ass1}, where $k_A=4.1023$. Figure \ref{fig1_gsa_02} shows the history of the desired reference system matrix eigenvalues $\lambda [A_{m}(\alpha(t))]$. As it is apparent, all the six eigenvalues remain negative with the time change of the scheduling parameter $\alpha(t)$. Figures \ref{fig1_gsa_07} and \ref{fig1_gsa_08} show the outputs (i.e., high and low pressure spool speeds) tracking their reference signals for three cases.

\begin{figure}[!ht]
\centering
\begin{minipage}[b]{0.48\textwidth}
\centering
\includegraphics[width=0.98\textwidth]{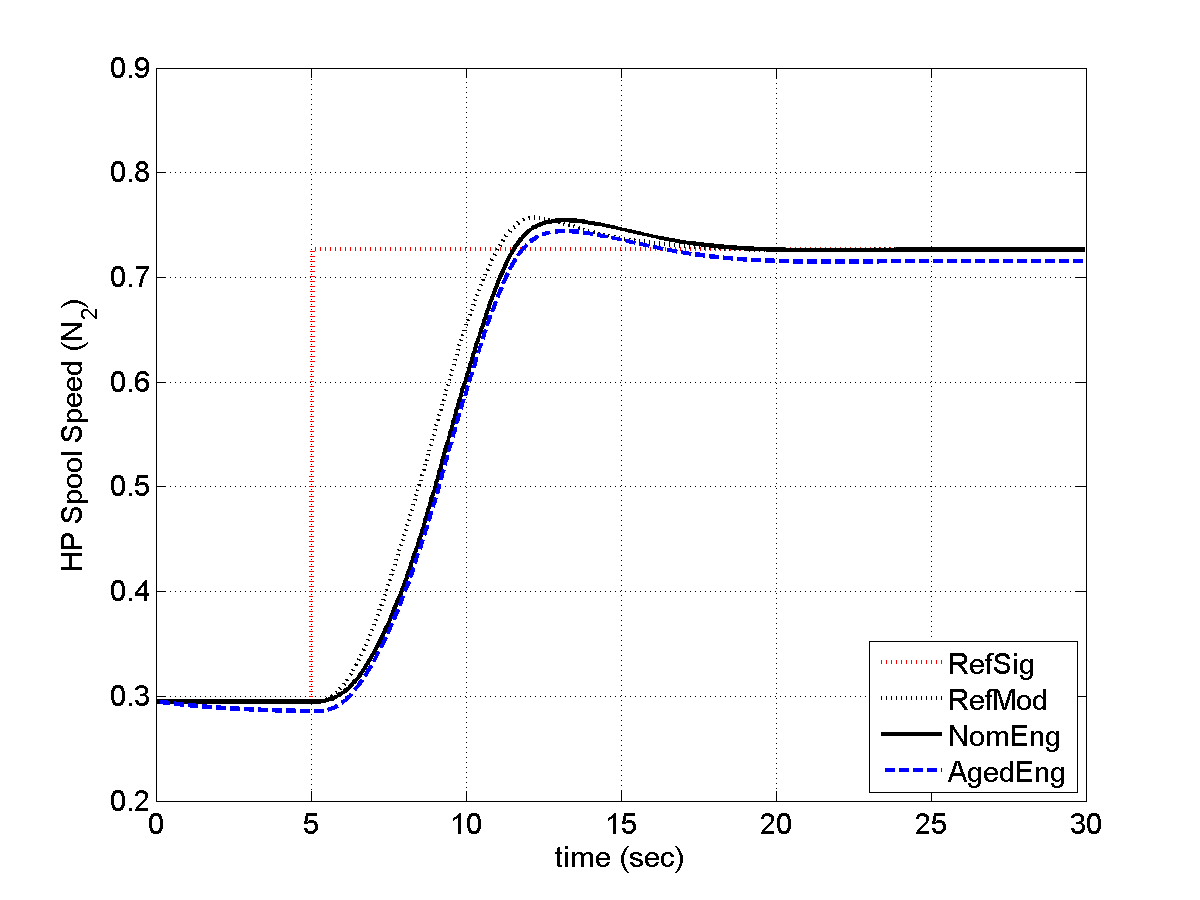}
\caption{High pressure spool speed and its reference signal}\label{fig1_gsa_07}
\end{minipage}
\hfill
\begin{minipage}[b]{0.48\textwidth}
\centering
\includegraphics[width=0.99\textwidth]{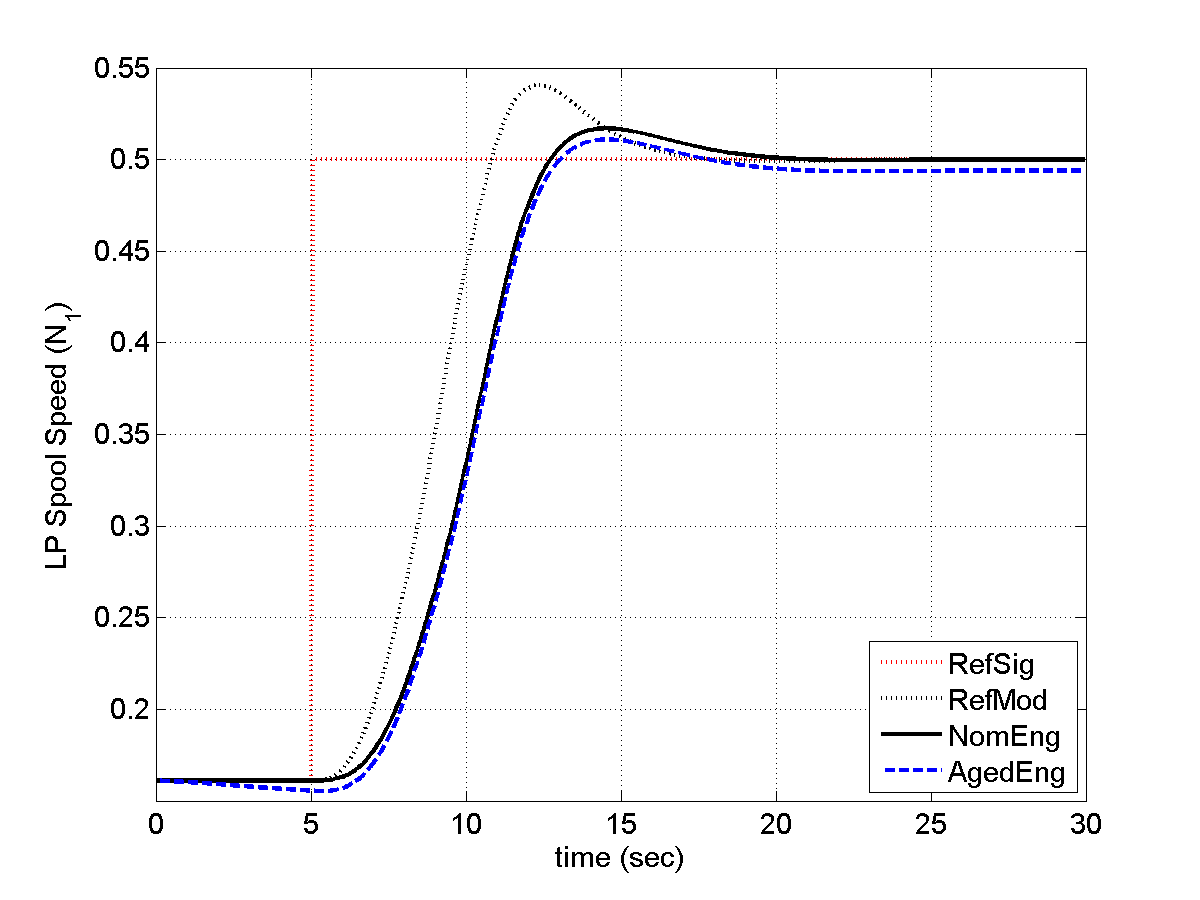}
\caption{Low pressure spool speed and its reference signal}\label{fig1_gsa_08}
\end{minipage}
\end{figure}
Figure \ref{fig1_gsa_09} shows the history of thrust and it is following its reference command from idle to cruise condition and then back to the idle for standard day, sea level condition. Figure \ref{fig1_gsa_10} shows the evolution of the infinity norm of the errors $||e(t)||$. The steady-state error in the Aged Engine (AgedEng) simulation case is because of the effect of the aging on the engine health parameters, and this causes a change in the equilibrium manifold of the aged engine in comparison to the nominal engine (NomEng). In other words, since we are using nominal engine equilibrium manifold to design a linear parameter dependant reference model, and the aged engine linear model has a different equilibrium manifold $x^p_{e, nom}(\alpha(t))\neq x^p_{e, aged}(\alpha(t))$, and $u_{e, nom}(\alpha(t))\neq u_{e, aged}(\alpha(t))$, then $\delta x^p_{aged}(t)= x^p_{aged}(t)-x^p_{e, aged}(\alpha(t)) \neq 0$, and $\delta u_{aged}(t)= u_{aged}(t)-u_{e, aged}(\alpha(t)) \neq 0$, and this means $||\delta x_{aged}(t)|| > \delta x_{\min} \neq 0$ for all $t > 0$, hence, there will be a steady-state error value greater than zero.

\begin{figure}[!ht]
\centering
\begin{minipage}[b]{0.48\textwidth}
\centering
\includegraphics[width=0.99\textwidth]{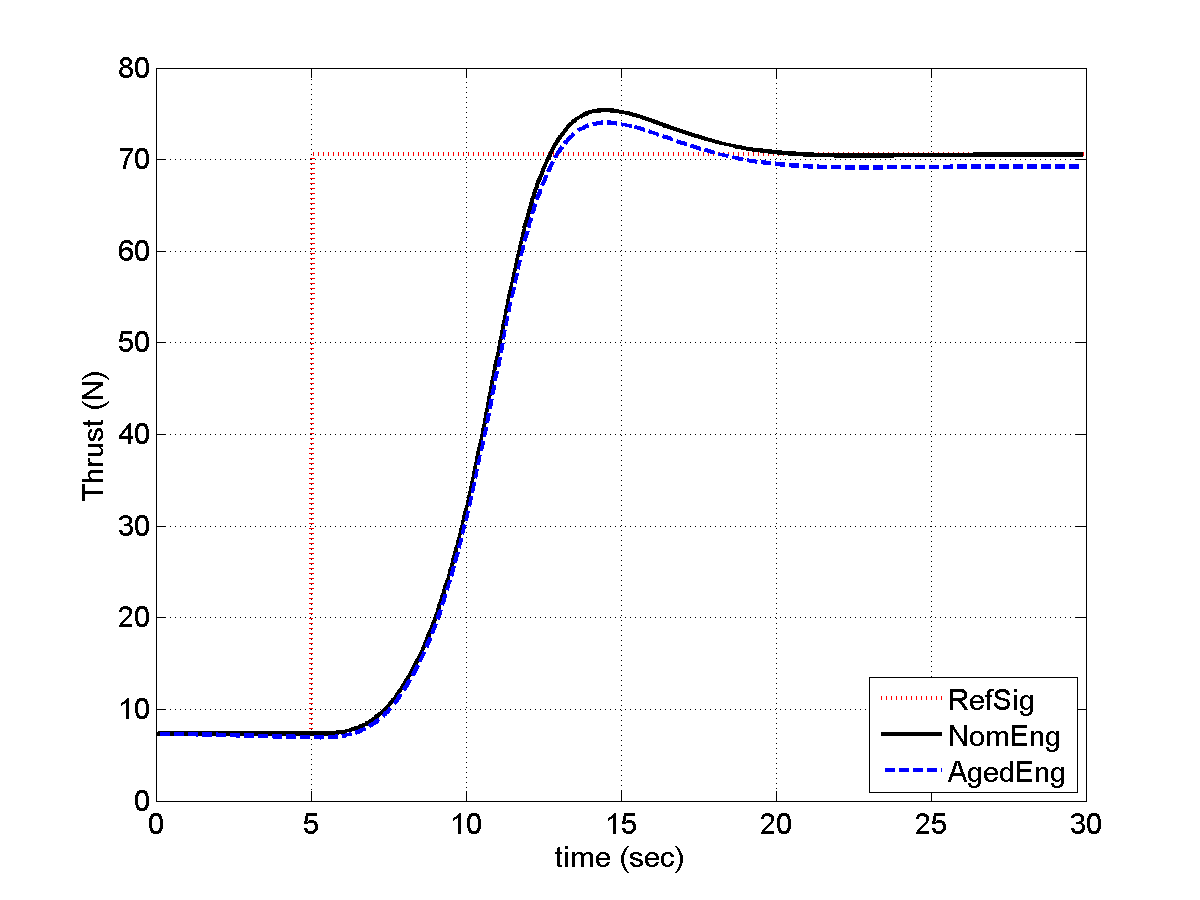}
\caption{Thrust and its reference signal}\label{fig1_gsa_09}
\end{minipage}
\hfill
\begin{minipage}[b]{0.48\textwidth}
\centering
\includegraphics[width=0.99\textwidth]{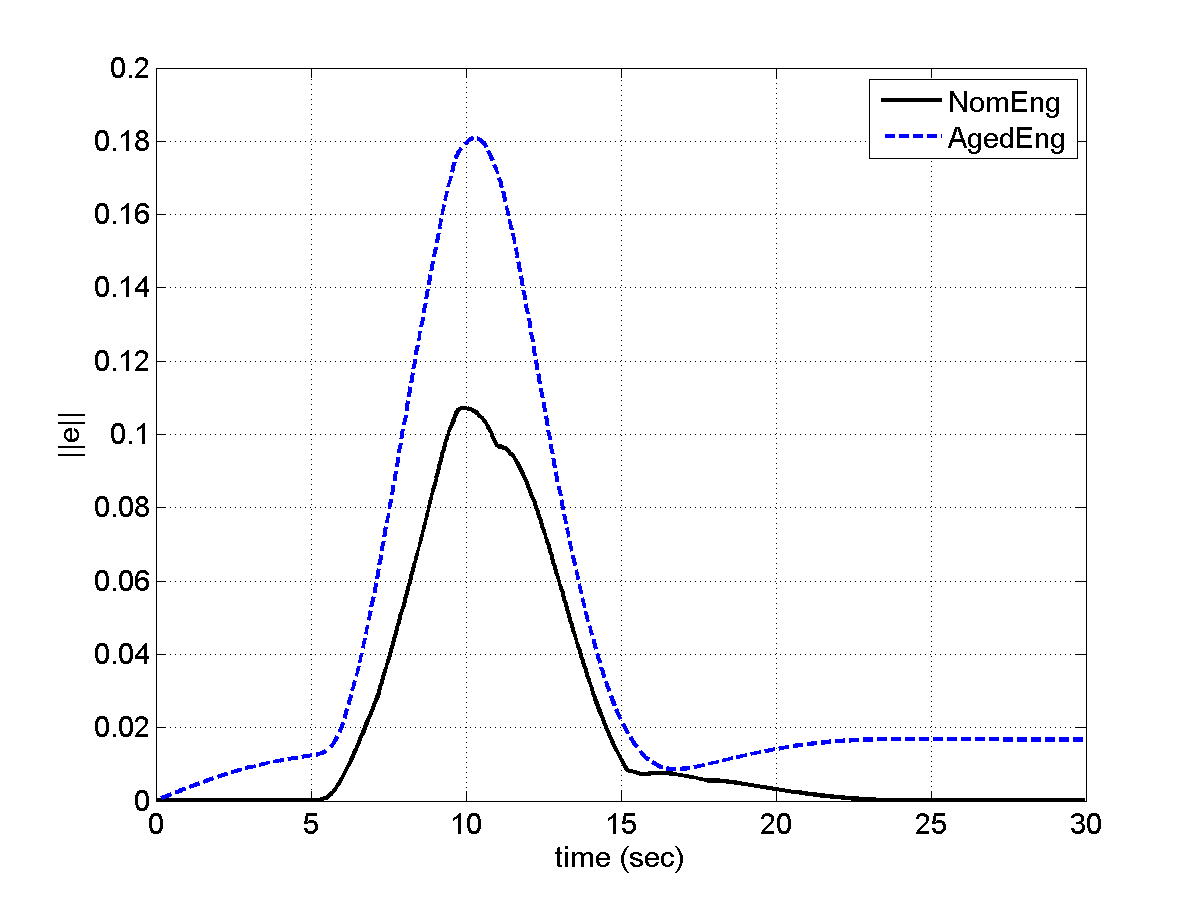}
\caption{Norm of the error signals $||e(t)||_{\infty}$ }\label{fig1_gsa_10}
\end{minipage}
\end{figure}

Figure \ref{fig1_gsa_11} shows the evolution of the control inputs $v(t)=[v_1(t), v_2(t)]^T$, which are inputs to the augmented system, each element is corresponding to one of the control inputs to the original system. Figure \ref{fig1_gsa_14} shows the histories of fuel flow and propeller pitch angle as the control inputs to the plant. Figures \ref{fig1_gsa_15} shows the evolution of the gain scheduling controller integral gain matrix ($K_i(\alpha(t))$) and also adaptive controller gain matrix ($\hat{K}_i(t)$).

\begin{figure}[!ht]
\centering
\begin{minipage}[b]{0.48\textwidth}
\centering
\includegraphics[width=0.99\textwidth]{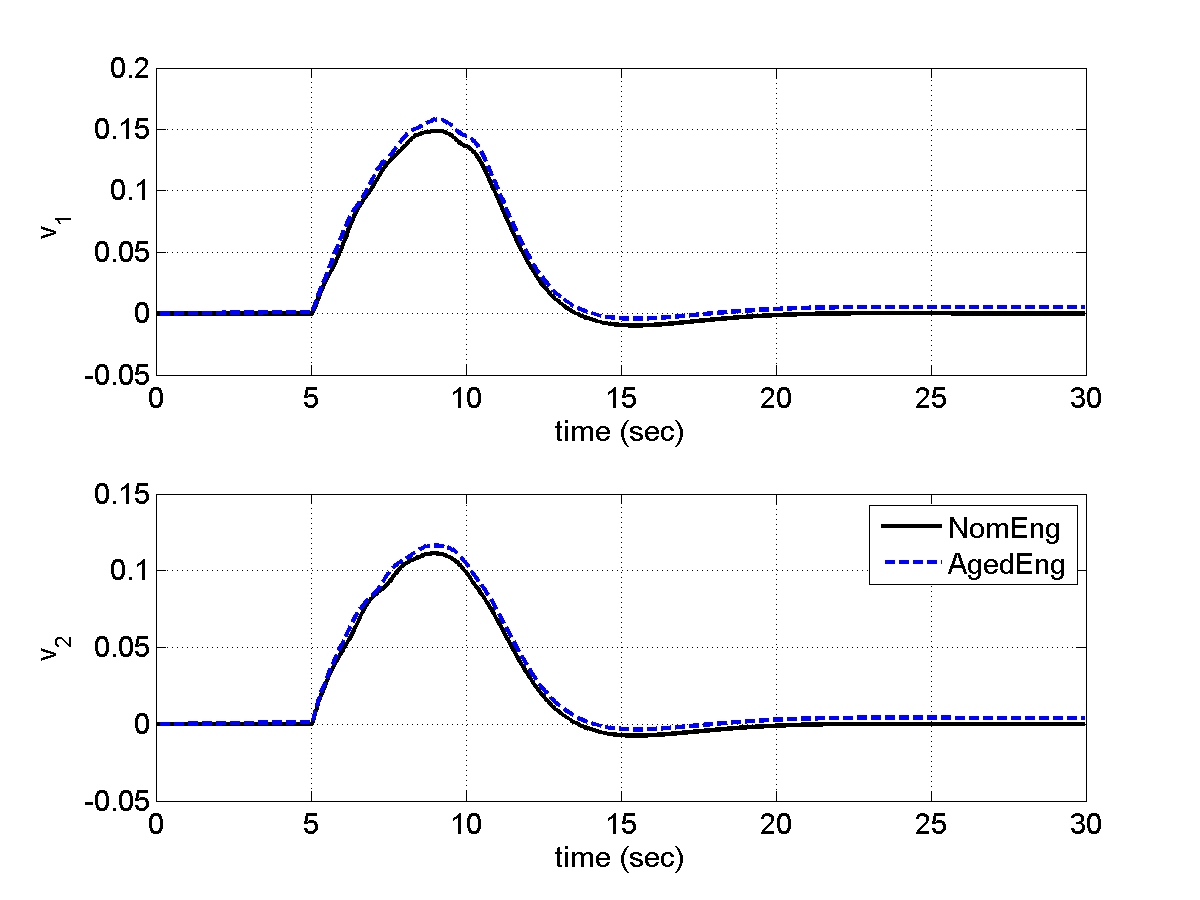}
\caption{Control inputs to the augmented system ($v(t)$)}\label{fig1_gsa_11}
\end{minipage}
\hfill
\begin{minipage}[b]{0.48\textwidth}
\centering
\includegraphics[width=0.99\textwidth]{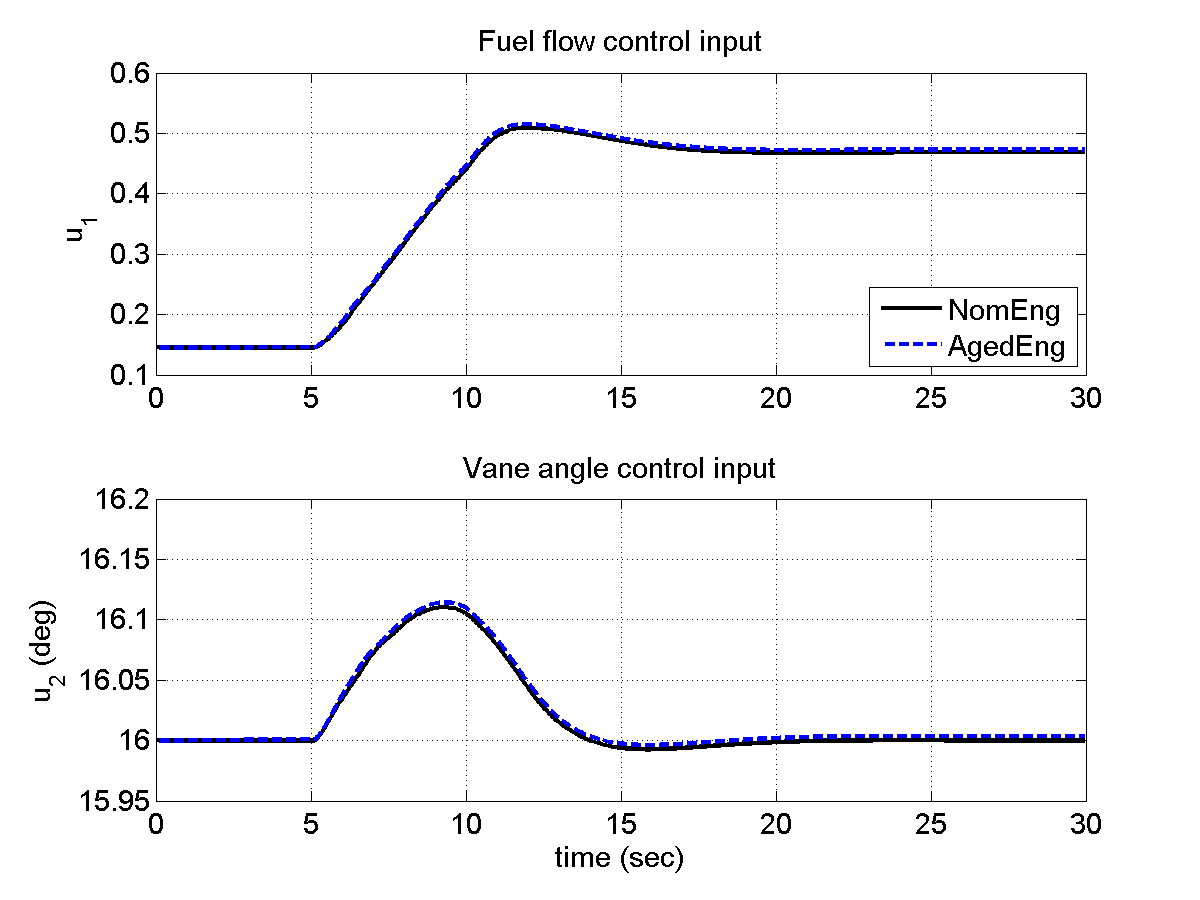}
\caption{Fuel and prop pitch angle control inputs ($u(t)$)}\label{fig1_gsa_14}
\end{minipage}
\end{figure}

$K_i(\alpha)$ have been obtained by interpolation using the predesigned indexed family of fixed-gain controllers, and each controller corresponds to one equilibrium point of the engine. The numerical values of these gains are given in equations (\ref{eqn_agsf3}) to (\ref{eqn_agsf5}), which represent the controller gains for idle and cruise condition and one more equilibrium point in between these two operating points. $\hat{K}_i(t)$ is generated using an adaptive law.
\begin{figure}[!ht]
\centering
\includegraphics[width=0.6\textwidth]{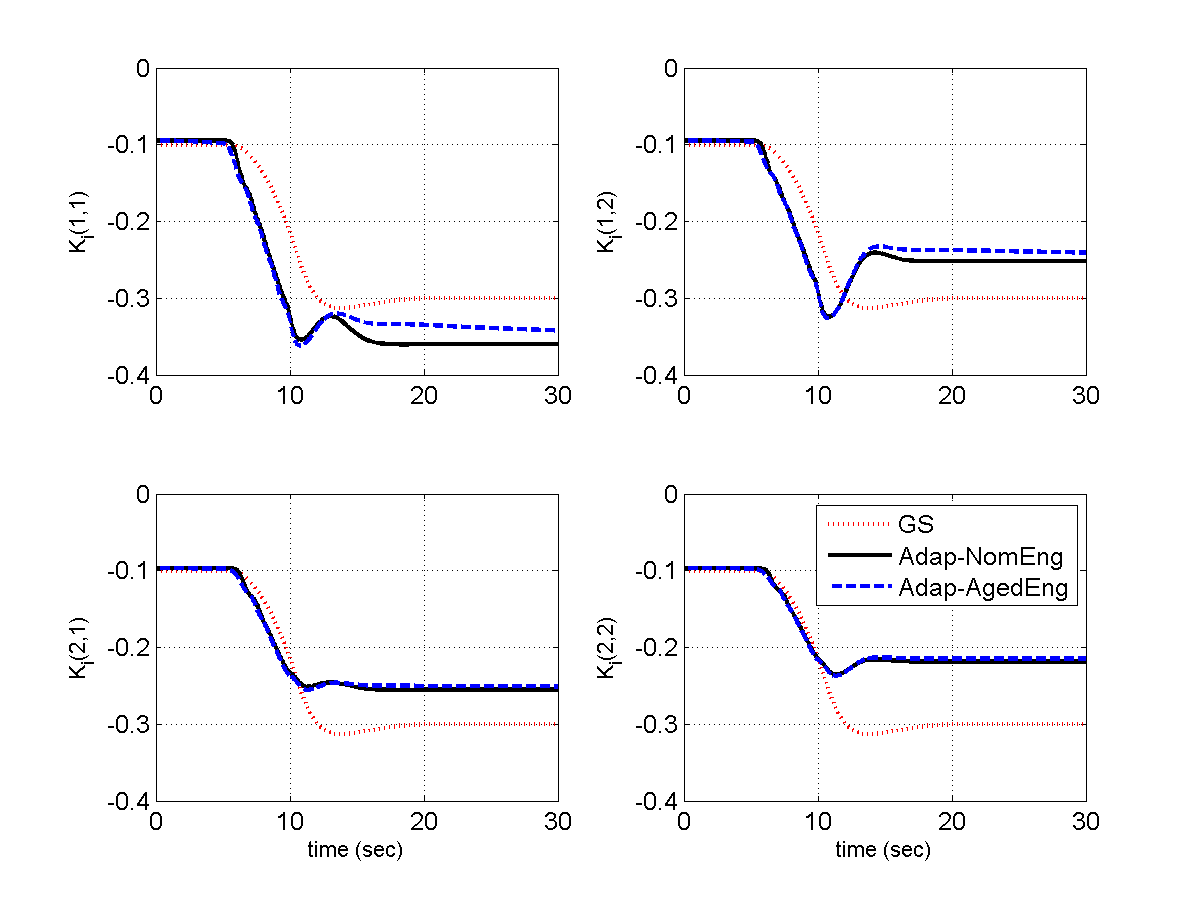}
\caption{Integral gain matrix elements for the gain scheduling controller ($K_i(\alpha(t))$), and for the adaptive controller ($\hat{K}_i(t)$)}\label{fig1_gsa_15}
\end{figure}

For the cases where engine overspeed or turbine temperature limits are of concern, the controller should be modified to consider the inputs with constraints. Investigating the adaptive controller with constrained control inputs for mitigating these issues is out of the scope of this manuscript. 
   
%%%%%%%%%%%%%%%%%%%%%%%%%%%%%%%%%%%%%%%%%%%%%%%%%%%%%%%%%%%%%%%%%%%%%%%%%%%%%%%%%%%%%%%%%%%%%%%%%%%%%%%%%%%%%%%%
\subsection{Conclusions}
%%%%%%%%%%%%%%%%%%%%%%%%%%%%%%%%%%%%%%%%%%%%%%%%%%%%%%%%%%%%%%%%%%%%%%%%%%%%%%%%%%%%%%%%%%%%%%%%%%%%%%%%%%%%%%%%
With the aid of convex optimization tools, a single quadratic Lyapunov function was computed, which guarantees the stability of the gain scheduled reference model. Adaptive state feedback control scheme for systems with gain scheduled reference models was developed, and its stability was proven. The resulting closed-loop system was shown to have ultimately bounded solutions with a priori adjustable bounded tracking error. Through the simulation based on a high fidelity physics-based model of a JetCat SPT5 turboshaft engine driving a variable pitch propeller, it was demonstrated the proposed adaptive controller, effectively, tracks the reference model in both nominal and degraded engine models. The developed adaptive control structure is not only limited to control degraded/damaged gas turbine engines, but also can be used for other practical applications.

%%%%%%%%%%%%%%%%%%%%%%%%%%%%%%%%%%%%%%%%%%%%%%%%%%%%%%%%%%%%%%%%%%%%%%%%%%%%%%%%%%%%%%%%%%%%%%%%%%%%%%%%%%%%%%%%%%%%%%%%%%%%%%%%%%%%%%%%%%%%%%%%%%%%%%%%%%%%%%%%%%%%%%%%%%%%%%%%%%%%%%%%%%%%%%%%%%%%%%%%%%%%%%
%%%%%%%%%%%%%%%%%%%%%%%%%%%%%%%%%%%%%%%%%%%%%%%%%%%%%%%%%%%%%%%%%%%%%%%%%%%%%%%%%%%%%%%%%%%%%%%%%%%%%%%%%%%%%%%%%%%%%%%%%%%%%%%%%%%%%%%%%%%%%%%%%%%%%%%%%%%%%%%%%%%%%%%%%%%%%%%%%%%%%%%%%%%%%%%%%%%%%%%%%%%%%%%%%%%%%%%%%%%%%%%%%%%%%%%%%%%%%%%%%%%%%%%%%%%%%%%%%%%%%%%%%%%%%%%%%%%%%%%%%%%%%%%%%%%%%%%%%%%%%%%%%%%%%%%%%%%%%%%%%%%%%%%%%%%%%%%%%%%%%%%%%%%%%%%%%%%%%%%%%%%%%%%%%%%%%%%%%%%%%%%%%%%%%%%%%%%%%%%%%%%%%%%%%%%%
\section{Adaptive Control of Systems with Gain Scheduled Reference Models - Part II: Constrained Control Inputs}
%%%%%%%%%%%%%%%%%%%%%%%%%%%%%%%%%%%%%%%%%%%%%%%%%%%%%%%%%%%%%%%%%%%%%%%%%%%%%%%%%%%%%%%%%%%%%%%%%%%%%%%%%%%%%%%%
%%%%%%%%%%%%%%%%%%%%%%%%%%%%%%%%%%%%%%%%%%%%%%%%%%%%%%%%%%%%%%%%%%%%%%%%%%%%%%%%%%%%%%%%%%%%%%%%%%%%%%%%%%%%%%%%

\subsection{Introduction}
%%%%%%%%%%%%%%%%%%%%%%%%%%%%%%%%%%%%%%%%%%%%%%%%%%%%%%%%%%%%%%%%%%%%%%%%%%%%%%%%%%%%%%%%%%%%%%%%%%%%%%%%%%%%%%%%
Various adaptive control approaches for systems with input saturation are described in \cite{AdapContConstraint-karason-1994, PhDThesis-jang-2009, AdapSatCont-monopoli-1975, LimAuthAdap-johnson-2003}. The stability proofs in these works are shown for adaptive control systems with LTI reference models. In this section a stable adaptive control structure will be developed with a time-varying reference model presented before. This model is a gain scheduled reference model without switching problem; which, in case of the gas turbine engine example its stability can be shown by finding a single Lyapunov function. The stability analysis presented here, uses some results from \cite{AdapContConstraint-karason-1994, PhDThesis-jang-2009}. The constraints on the control inputs are implemented using a multi-dimensional rectangular saturation function. This controller, then, has been implemented on a high fidelity physics-based JetCat SPT5 turboshaft engine model for large throttle commands with constraints on the control inputs to keep the engine in its safe operating envelope.

The contribution of this section is the development of a stable state feedback model reference adaptive control algorithm for systems with gain scheduled reference models and constrained control inputs in a MIMO setting; the approach is applicable to  systems, such as gas turbine engines, which the stability of their gain scheduled reference model is guaranteed by computing a single Lyapunov function \cite{PhDThesis-pakmehr-2013, GSstability-pakmehr-2013, GainSchedStabConf-pakmehr-2013}. A detailed stability analysis is performed for the proposed controller which can be used towards control software verification and certification \cite{ContSoftware-feron-2010}.

The rest of this section is organized as follows. In subsection II, a model reference adaptive control with a gain scheduled reference model and constrained control inputs is designed with detailed stability proof. In subsection III, simulations are performed for two different engine models including the nominal and degraded engine models. Simulation results show that the developed adaptive controller can be used effectively for the entire flight envelope of the turboshaft engine with guaranteed stability. Subsection IV, concludes this section.

%%%%%%%%%%%%%%%%%%%%%%%%%%%%%%%%%%%%%%%%%%%%%%%%%%%%%%%%%%%%%%%%%%%%%%%%%%%%%%%%%%%%%%%%%%%%%%%%%%%%%%%%%%%%%%%%
\subsection{Adaptive Control with Constrained Control Inputs}
%%%%%%%%%%%%%%%%%%%%%%%%%%%%%%%%%%%%%%%%%%%%%%%%%%%%%%%%%%%%%%%%%%%%%%%%%%%%%%%%%%%%%%%%%%%%%%%%%%%%%%%%%%%%%%%%

%%%%%%%%%%%%%%%%%%%%%%%%%%%%%%%%%%%%%%%%%%%%%%%%%%%%%%%%%%%%%%%%%%%%%%%%%%%
\subsubsection{Stability Analysis}
In order to avoid the adaptive controller parameters to be adjusted improperly by the saturation error, we use the augmented error method in the adaptive control design developed in \cite{AdapContConstraint-karason-1994, PhDThesis-jang-2009} to provide the stability analysis for a gain scheduled model reference adaptive control system.  The plant (\ref{eqn_ags152}) with saturated control inputs can be written as
\begin{equation}\label{eqn_ags80}
\begin{array}{c}
\dot{x}=A(\alpha(t))x(t) + B R_s(v(t)) + B_r r(t), ~~ \forall \alpha \in \Omega,
\end{array}
\end{equation}
where $v(t)$ is the adaptive control input which introduced in equation (\ref{eqn_ags54}). the ultimate goal is to determine adaptive parameters such that all signals in the plant (\ref{eqn_ags80}) are guaranteed to be bounded, and $y(t)$ tracks $r(t)$. The deficiency of $v(t)$ is defined as $\Delta v(t) = v(t) - R_s(v(t))$. Now, plant (\ref{eqn_ags80}) can be written as
\begin{equation}\label{eqn_ags82}
\begin{array}{c}
\dot{x}=A(\alpha(t))x(t) + B v(t) - B \Delta v(t) + B_r r(t), ~~ \forall \alpha \in \Omega.
\end{array}
\end{equation}
Plant (\ref{eqn_ags82}) with controller (\ref{eqn_ags54}) can be written as
\begin{equation}\label{eqn_ags821}
\begin{array}{c}
\dot{x}=A(\alpha(t))x(t) + B \hat{K}^\mathsf{T}(t)x(t) - B \Delta v(t) + B_r r(t), ~~ \forall \alpha \in \Omega.
\end{array}
\end{equation}
Subtracting the reference model (\ref{eqn_ags52}) and the plant (\ref{eqn_ags821}), a closed-loop error dynamics equation is obtained as
\begin{equation}\label{eqn_ags83}
\begin{array}{l}
\dot{e}(t)= A_m(\alpha(t)) e(t) + B \tilde{K}^\mathsf{T}(t)x(t)- B \Delta v(t), ~~ \forall \alpha \in \Omega.
\end{array}
\end{equation}
In order to eliminate the adverse effect of the disturbance $\Delta v(t)$ we generate a signal $e_{\Delta}(t)$ as
\begin{equation}\label{eqn_ags84}
\begin{array}{l}
\dot{e}_{\Delta}(t)= A_m(\alpha(t)) e_{\Delta}(t) - K_{\Delta}(t) \Delta v(t), ~~ \forall \alpha \in \Omega, \\
e_{\Delta}(t_0)=0
\end{array}
\end{equation}
where $K_{\Delta}(t) \in \Re^{(n+2m) \times m}$. The undesirable effects due to control input saturation can be removed from the error dynamics in equation (\ref{eqn_ags83}) by defining an augmented error $e_v(t)=e(t)-e_{\Delta}(t)$. Its dynamics can be determined as
\begin{equation}\label{eqn_ags85}
\begin{array}{l}
\dot{e}_{v}(t)= A_m(\alpha(t)) e_{v}(t) + B \tilde{K}^\mathsf{T}(t)x(t)- \tilde{K}_{\Delta}(t) \Delta v(t), ~~ \forall \alpha \in \Omega,
\end{array}
\end{equation}
where $\tilde{K}_{\Delta}(t)=B-K_{\Delta}(t)$. Let $K_{\Delta}(t) \in \Theta_{\Delta}$, where $\Theta_{\Delta}$ is a convex compact set. We now choose adaptive laws for adjusting the parameters
\begin{equation}\label{eqn_ags86}
\begin{array}{l}
\dot{\hat{K}}(t)= \text{Proj}_{\Gamma} \left( \hat{K}(t), -x(t)e_v^\mathsf{T}(t) P B \right), \\
\dot{K}^\mathsf{T}_{\Delta}(t)= \text{Proj}_{\Gamma} \left( K^\mathsf{T}_{\Delta}(t), \Delta v(t) e_v^\mathsf{T}(t) P \right),
\end{array}
\end{equation}
where $P=P^\mathsf{T} > 0$ is a solution of LMI (\ref{eqn_gs118}). The gains in adaptive laws $\Gamma \in \Re^{(n+2m) \times (n+2m)}$, $\Gamma_{\Delta} \in \Re^{m \times m}$ are positive definite matrices $\Gamma=\Gamma^\mathsf{T} > 0$ and $\Gamma_{\Delta}=\Gamma^\mathsf{T}_{\Delta} > 0$.
\begin{thm} \label{gsa-thm6}
The error $e_v(t)$ in equation (\ref{eqn_ags85}) is bounded,
\begin{equation}\label{eqn_ags87}
\begin{array}{c}
||e_v(t)||_{L_{\infty}} \leq \sqrt{\frac{k_m ||\Gamma^{-1}||}{\lambda_{\min}(P)}},
\end{array}
\end{equation}
where $k_m :=4 \sum \limits_{j=1}^{m} \max \limits_{k_j \in \Theta_i} ||k^*_j(t)||^2 + 4 \frac{\lambda_{\max}(P)}{\epsilon_q} ||\Gamma^{-1}||\sum \limits_{j=1}^{m} \max \limits_{k^*_j \in \Theta_{k_j}} ||k^*_j(t)|| d_{k_j}$. 
\end{thm}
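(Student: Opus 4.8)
The plan is to mirror the proof of Theorem~\ref{gsa-thm5}, enlarging the Lyapunov function to absorb the new saturation‑compensator error $\tilde{K}_{\Delta}(t)=B-K_{\Delta}(t)$. I would take
\[
V\big(e_v,\tilde{K},\tilde{K}_{\Delta}\big)= e_v^{\mathsf{T}}(t)P e_v(t) + \text{trace}\big(\tilde{K}^{\mathsf{T}}(t)\Gamma^{-1}\tilde{K}(t)\big) + \text{trace}\big(\tilde{K}_{\Delta}(t)\Gamma_{\Delta}^{-1}\tilde{K}_{\Delta}^{\mathsf{T}}(t)\big),
\]
with $P=P^{\mathsf{T}}>0$ the common LMI solution from Lemma~\ref{gsa_lem1}. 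Differentiating along the augmented‑error dynamics~(\ref{eqn_ags85}) and using $\dot{\tilde{K}}=\dot{\hat{K}}-\dot{K}^{*}$ and $\dot{\tilde{K}}_{\Delta}^{\mathsf{T}}=-\dot{K}_{\Delta}^{\mathsf{T}}$ together with the adaptive laws~(\ref{eqn_ags86}), $\dot V$ is the sum of the quadratic form $e_v^{\mathsf{T}}(A_m^{\mathsf{T}}P+PA_m)e_v$, the cross term $2e_v^{\mathsf{T}}PB\tilde{K}^{\mathsf{T}}x$, the cross term $-2e_v^{\mathsf{T}}P\tilde{K}_{\Delta}\Delta v$, and the two parameter‑error trace terms.

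For the quadratic form I would invoke Lemma~\ref{gsa_lem1}/inequality~(\ref{eqn_gs118}) to get $e_v^{\mathsf{T}}(A_m^{\mathsf{T}}P+PA_m)e_v\le -e_v^{\mathsf{T}}Q e_v$. The term $2e_v^{\mathsf{T}}PB\tilde{K}^{\mathsf{T}}x$ is handled exactly as in Theorem~\ref{gsa-thm5}: rewrite it via the trace identity as $-2\,\text{trace}(\tilde{K}^{\mathsf{T}}Y_K)$ with $Y_K=-x e_v^{\mathsf{T}}PB$, pair it with the $\hat{K}$ adaptive‑law term, and apply Lemma~\ref{gsa-lem11} to eliminate it. The genuinely new computation is the term $-2e_v^{\mathsf{T}}P\tilde{K}_{\Delta}\Delta v$; writing it in terms of $Y_{K_{\Delta}}=\Delta v\, e_v^{\mathsf{T}}P$ and pairing it with $2\,\text{trace}(\tilde{K}_{\Delta}\Gamma_{\Delta}^{-1}\dot{\tilde{K}}_{\Delta}^{\mathsf{T}})$, one applies Lemma~\ref{gsa-lem11} a second time with ideal value $K_{\Delta}^{*}=B$ so that $\tilde{K}_{\Delta}$ plays the role of $\Theta^{*}-\Theta$; either this cancels the term outright, or any indefinite residue is dominated by a fraction of $-e_v^{\mathsf{T}}Q e_v$ via Young's inequality, which is what turns $\lambda_{\min}(Q)$ into the constant $\epsilon_q$ appearing in $k_m$. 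For the projection machinery to apply here one needs the boundedness of $\Delta v(t)=v(t)-R_s(v(t))$ recorded after~(\ref{eqn_ags73}). What remains is $\dot V\le -\epsilon_q\|e_v\|^{2}-2\,\text{trace}(\tilde{K}^{\mathsf{T}}\Gamma^{-1}\dot{K}^{*})$, the same inequality as in Theorem~\ref{gsa-thm5} with $e$ replaced by $e_v$ (the $\tilde{K}_{\Delta}$ channel contributes no $\dot{K}_{\Delta}^{*}$‑term since $K_{\Delta}^{*}=B$ is constant).

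The conclusion then follows by the sublevel‑set/contradiction argument of Theorem~\ref{gsa-thm5}. The projection operator confines $\hat{K}(t)$ and $K_{\Delta}^{\mathsf{T}}(t)$ to their prescribed compact sets for all $t$, so $\text{trace}(\tilde{K}^{\mathsf{T}}\Gamma^{-1}\tilde{K})\le 4\|\Gamma^{-1}\|\sum_{j}\max\|k_j^{*}\|^{2}$ and $\text{trace}(\tilde{K}_{\Delta}\Gamma_{\Delta}^{-1}\tilde{K}_{\Delta}^{\mathsf{T}})$ is likewise bounded, while $|\,2\,\text{trace}(\tilde{K}^{\mathsf{T}}\Gamma^{-1}\dot{K}^{*})|\le 4\|\Gamma^{-1}\|\sum_{j}\max\|k_j^{*}\|d_{k_j}$ by Assumption~\ref{gsa-ass2}. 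Using $e_v(0)=e(0)-e_{\Delta}(0)=0$ (with $x_m(0)=x(0)$, and initializing $K_{\Delta}(0)=B$ so the $\tilde{K}_{\Delta}(0)$ term is as small as possible) one checks $V(0)<k_m\|\Gamma^{-1}\|$, and if ever $V(t_1)>k_m\|\Gamma^{-1}\|$ then the bound on $e_v^{\mathsf{T}}P e_v$ forces $\epsilon_q\|e_v\|^{2}$ to exceed the $\dot{K}^{*}$‑term, so $\dot V(t_1)<0$; hence $V(t)\le k_m\|\Gamma^{-1}\|$ for all $t\ge0$ and $\lambda_{\min}(P)\|e_v\|^{2}\le e_v^{\mathsf{T}}P e_v\le V$ yields~(\ref{eqn_ags87}).

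The main obstacle I anticipate is the bookkeeping around the $\tilde{K}_{\Delta}\Delta v$ term: aligning the transposes and the update direction of the second law in~(\ref{eqn_ags86}) so that it exactly matches the cross term in $\dot V$, confirming $K_{\Delta}^{*}=B$ is the correct centering for Lemma~\ref{gsa-lem11}, and accounting for the $\tilde{K}_{\Delta}$ quadratic inside $V$ in the sublevel‑set step — as written, $k_m$ carries only $\|\Gamma^{-1}\|$, so one must either augment $k_m$ with a $\|\Gamma_{\Delta}^{-1}\|$‑dependent constant (the bound on $\text{trace}(\tilde{K}_{\Delta}\Gamma_{\Delta}^{-1}\tilde{K}_{\Delta}^{\mathsf{T}})$) or argue that this contribution is negligible. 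A subtler point is non‑circularity: the estimate uses $\Delta v$ — hence the plant state $x(t)$ driving $v=\hat{K}^{\mathsf{T}}x$ — staying bounded along the closed loop, so one should either work in the region where $x$ remains bounded or couple the $e_v$ bound with boundedness of $x_m$ and $e_{\Delta}$ (the latter via the LMI‑stability of $A_m$ and boundedness of $\Delta v$), which is precisely the ultimate‑boundedness / semi‑global character announced in the introduction of this section.
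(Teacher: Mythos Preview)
Your approach is essentially the paper's: same three-term Lyapunov function, same use of Lemma~\ref{gsa_lem1} for the quadratic form, same $Y_K$ and $Y_{K_\Delta}$, and two applications of Lemma~\ref{gsa-lem11} to kill both cross terms, leaving $\dot V\le -e_v^{\mathsf T}Qe_v-2\,\text{trace}(\tilde K^{\mathsf T}\Gamma^{-1}\dot K^{*})$, after which the sublevel-set argument of Theorem~\ref{gsa-thm5} is invoked verbatim.

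Two small corrections relative to the paper. First, you hedge on the $\tilde K_\Delta\Delta v$ term (``either this cancels \ldots\ or any indefinite residue is dominated \ldots\ via Young's inequality''). In the paper it cancels outright: with $Y_{K_\Delta}=\Delta v\,e_v^{\mathsf T}P$ one has $-2e_v^{\mathsf T}P\tilde K_\Delta\Delta v=-2\,\text{trace}(\tilde K_\Delta Y_{K_\Delta})$, and since $\dot{\tilde K}_\Delta^{\mathsf T}=-\dot K_\Delta^{\mathsf T}=-\text{Proj}_{\Gamma_\Delta}(K_\Delta^{\mathsf T},Y_{K_\Delta})$ with ideal value $B$, Lemma~\ref{gsa-lem11} gives the trace combination $\le 0$ with no residue and no $\dot K_\Delta^{*}$-term. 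No Young's inequality is used, and the $\epsilon_q$ in the statement is simply the paper's shorthand (or a typo) for $\lambda_{\min}(Q)$, matching Theorem~\ref{gsa-thm5}. Second, the concerns you flag at the end---the missing $\|\Gamma_\Delta^{-1}\|$-dependent contribution of $\text{trace}(\tilde K_\Delta\Gamma_\Delta^{-1}\tilde K_\Delta^{\mathsf T})$ in the sublevel-set bound, and the circularity through $x(t)$ in $\Delta v$---are real, but the paper does not address them either: it closes by citing ``Lemma~5.1.2 of \cite{L1AdaptiveBook-hovakimyan-2010}'' and defers boundedness of $x(t)$ to the separate Theorem~\ref{gsa-thm7}. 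So your caution is well placed, but for matching the paper's proof you can drop the Young's-inequality branch and state the clean cancellation.
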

\begin{proof}
A Lyapunov candidate function is chosen as $V (e_v(t),\tilde{K}(t), \tilde{K}_{\Delta}(t))= e_v^\mathsf{T}(t) P e_v(t)  + \text{trace} \left( \tilde{K}(t)^\mathsf{T} \Gamma^{-1} \tilde{K}(t) \right) + \text{trace} \left( \tilde{K}_{\Delta}(t) \Gamma_{\Delta}^{-1} \tilde{K}^\mathsf{T}_{\Delta}(t) \right)$, where its time-derivative, for all $\alpha \in \Omega$ is given by
\begin{equation}\label{eqn_ags90}
\begin{array}{l}
\dot{V}(.)= e_v^\mathsf{T}(t) \left( A_m^\mathsf{T}(\alpha(t)) P + P A_m(\alpha(t)) \right) e_v(t) \\[3pt]
~~~ + 2e_v^\mathsf{T}(t) P B \tilde{K}^\mathsf{T}(t) x(t) + 2 \text{trace} \left( \tilde{K}^\mathsf{T}(t) \Gamma^{-1} \dot{\tilde{K}}(t) \right) \\[5pt]
~~~ - 2e_v^\mathsf{T}(t) P \tilde{K}_{\Delta}(t) \Delta v(t) + 2 \text{trace} \left( \tilde{K}_{\Delta}(t) \Gamma_{\Delta}^{-1} \dot{\tilde{K}}^\mathsf{T}_{\Delta}(t) \right).
\end{array}
\end{equation}
Using Lemma \ref{gsa_lem1}, applying trace identity, and letting $Y_K(t)=-x(t) e_v^\mathsf{T}(t) P B$, and $Y_{K_{\Delta}}(t)= \Delta v(t) e_v^\mathsf{T}(t) P$, and knowing $\dot{\tilde{K}}(t)=\dot{\hat{K}}(t)-\dot{K}^*(t)$, and $\dot{\tilde{K}}_{\Delta}(t)=-\dot{K}_{\Delta}(t)$  leads to
\begin{equation}\label{eqn_ags91}
\begin{array}{l}
\dot{V}(.) \leq -e_v^\mathsf{T}(t) Q e_v(t)- 2 \text{trace} \left( \tilde{K}^\mathsf{T}(t) \Gamma^{-1} \dot{K}^*(t) \right)\\[3pt]
~~+ 2 \text{trace} \left( \tilde{K}^\mathsf{T}(t) \left[ \Gamma^{-1} \text{Proj}_{\Gamma}(\hat{K}(t), Y_K(t))- Y_K(t) \right] \right)\\[5pt]
~~ + 2 \text{trace} \left( \tilde{K}_{\Delta}(t) \left[ \Gamma_{\Delta}^{-1}\text{Proj}_{\Gamma}(K^\mathsf{T}_{\Delta}(t), Y_{K_{\Delta}}(t))- Y_{K_{\Delta}}(t) \right] \right).
\end{array}
\end{equation}
Using Lemma \ref{gsa-lem11}
\begin{equation}\label{eqn_ags92}
\begin{array}{l}
\dot{V}(.) \leq -e_v^\mathsf{T}(t) Q e_v(t) - 2 \text{trace} \left( \tilde{K}^\mathsf{T}(t) \Gamma^{-1} \dot{K}^*(t) \right).
\end{array}
\end{equation}
In Essence, the projection operator ensures that the columns $\hat{K}_j$ of the adaptive parameter matrix $\hat{K}(t)$ do not exceed their pre-specified bounds $\hat{K}^{max}_j$, hence $\sum \limits_{j=1}^{m} \max \limits_{t \geq 0} \left( \tilde{K}_j^\mathsf{T}(t) \Gamma^{-1} \tilde{K}_j(t) \right) \leq 4 ||\Gamma^{-1}|| \sum \limits_{j=1}^{m} \max \limits_{k^*_j \in \Theta_{k_j}} ||k^*_j(t)||^2 $, for all $t \geq 0$. The rest of the proof is similar to the proof of Lemma 5.1.2 from \cite{L1AdaptiveBook-hovakimyan-2010}.
\end{proof}
\begin{rmk}\label{gsa-rmk3}
The proof of Theorem \ref{gsa-thm6} showed the boundedness of $e_v(t)$, however it can not guarantee the boundedness of the tracking error $e(t)$. To prove the boundedness of $e(t)$, we must prove that $x(t)$ is bounded when the control inputs are constrained under rectangular saturation.
\end{rmk}
We define $\Theta^*_{\max}$ and $\Theta_{\max}$ to be $\Theta^*_{\max}= \mathrm{sup}||K^*(t)||$, $\Theta_{\max}=\max \left[ \mathrm{sup}||\tilde{K}(t)||,~ \mathrm{sup}||\tilde{K}_{\Delta}(t)|| \right]$. Since we assumed the control gains belong to a known compact set, then $\Theta^*_{\max}$ and $\Theta_{\max}$ are positive and finite, hence there exists a smallest $n \in \mathbb{N}$ such that $\Theta^*_{\max} \leq n \Theta_{\max}$. For efficiency of notation we define
\begin{equation}\label{eqn_ags94}
\begin{array}{l}
 \gamma_{\max} := \max \left[ ||\Gamma^{-1}||,~||\Gamma^{-1}_{\Delta}|| \right], \\[10pt]
 v_{\min} := \min \limits_{i} (v_{i, \max}),  ~ v_{\max} := \max \limits_{i} (v_{i, \max}), \\[10pt]
 v_{0} := \sqrt{ \sum \limits_{i=1}^{m} v^2_{i,\max} }, ~ \rho := \sqrt{\frac{\lambda_{\max}(P)}{\lambda_{\min}(P)}},
\end{array}
\end{equation}
where $v_{i,\max}>0$ is the limit of the $i$th element of $v(t)$ and $Z_B \in \Re$ is defined using the induced norm by the vector 2-norm such that the property is described by $||x^\mathsf{T}(t) P[B,~B_r]|| \leq Z_B ||x(t)||$. We also define the following constants for simplicity
\begin{equation}\label{eqn_ags941}
\begin{array}{l}
 x_{\min} := \frac{ Z_B(2v_{0} + 2 r_{\max}) }{\lambda_{\min}(Q)-(3n+2)Z_B \Theta_{\max} }, \\[10pt]
 x_{\max} := \frac{Z_B v_{\min}}{|\lambda_{\min}(Q)-2Z_B \Theta^*_{\max}|}, \\[10pt]
 Z_{\max} := \frac{ \lambda_{\min}(Q)-Z_B \frac{\rho}{x_{\max}}(2v_{0} + 2 r_{\max}) }{ Z_B (3\frac{\rho}{x_{\max}}+3n+2) }.
\end{array}
\end{equation}

\begin{thm} \label{gsa-thm7}
Under Assumptions \ref{gsa-ass3} and \ref{gsa-ass2} for the system (\ref{eqn_ags80}) with the controller (\ref{eqn_ags54}) and the adaptive laws in (\ref{eqn_ags86}), $x(t)$ has a semi-globally bounded trajectory with respect to the level of saturation for all $t>0$ if
\begin{equation}\label{eqn_ags95}
\begin{array}{l}
  \mathrm{(i)}~ ||x(0)|| < \frac{x_{\max}}{\rho},\\
  \mathrm{(ii)}~ \sqrt{V(0)} < \frac{Z_{\max}}{\sqrt{\gamma_{\max}}}.
\end{array}
\end{equation}
Further
\begin{equation}\label{eqn_ags96}
\begin{array}{l}
||x(t)|| <  x_{\max}, ~~ \forall t>0,
\end{array}
\end{equation}
and error $e(t)$ is in the order of $||e(t)|| = \mathcal{O}[\sup \limits_{\tau \leq t}||\Delta v(\tau)||]$.
\end{thm}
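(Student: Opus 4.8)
The plan is to prove this by a first-exit-time (continuity) argument that bootstraps the uniform bounds already delivered by Theorem~\ref{gsa-thm6}. First I would collect the ingredients. Since the Lyapunov function in the proof of Theorem~\ref{gsa-thm6} satisfies $\dot V(\cdot)\le 0$ whenever it exceeds the threshold appearing there, $V(e_v,\tilde K,\tilde K_\Delta)$ is bounded for all $t$, hence $e_v(t)$, $\tilde K(t)$ and $\tilde K_\Delta(t)$ are uniformly bounded; in particular the projection guarantees $\|\tilde K(t)\|\le\Theta_{\max}$, $\|\tilde K_\Delta(t)\|\le\Theta_{\max}$ and $\|K^*(t)\|\le\Theta^*_{\max}\le n\Theta_{\max}$. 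Because $A_m(\alpha(t))$ admits the common Lyapunov pair $(P,Q)$ of Lemma~\ref{gsa_lem1} and $r(t)$ is bounded, the reference model (\ref{eqn_ags52}) has uniformly bounded state $x_m(t)$. Finally, since $x_m(0)=x(0)$ and $e_\Delta(t_0)=0$ give $e_v(0)=0$, condition (ii) is exactly a smallness bound on the initial parameter error through $V(0)$.

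The core step is the exit-time argument for $\|x(t)\|$. Suppose (\ref{eqn_ags96}) fails; by (i) we have $\|x(0)\|<x_{\max}/\rho<x_{\max}$, so by continuity there is a first time $t_1>0$ with $\|x(t_1)\|=x_{\max}$ and $\|x(t)\|<x_{\max}$ on $[0,t_1)$. On this interval I would bound the commanded input by $\|v(t)\|=\|\hat K^\mathsf{T}(t)x(t)\|\le(\Theta_{\max}+\Theta^*_{\max})\|x(t)\|<(n+1)\Theta_{\max}x_{\max}$ and use the elementary properties of the rectangular saturation ($\Delta v(t)=0$ unless $\|v(t)\|>v_{\min}$, $\|\Delta v(t)\|\le\|v(t)\|$, and $\|R_s(v)\|\le v_0$) to bound $\|\Delta v(t)\|$ on $[0,t_1)$. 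Then, differentiating $x^\mathsf{T}(t)Px(t)$ along $\dot x=A_m(\alpha)x+B\tilde K^\mathsf{T}x-B\Delta v+B_r r$, invoking (\ref{eqn_gs118}) and $\|x^\mathsf{T}P[B,B_r]\|\le Z_B\|x\|$, one obtains
\begin{equation*}
\frac{d}{dt}\big(x^\mathsf{T}Px\big)\le -\big(\lambda_{\min}(Q)-(3n+2)Z_B\Theta_{\max}\big)\|x\|^2+2Z_B(v_0+r_{\max})\|x\|,
\end{equation*}
so $x^\mathsf{T}Px$ is strictly decreasing whenever $\|x\|>x_{\min}$. Combining this with the initial-data bound from (i) and with the margin forced by (ii) — which, through the decomposition $e=e_v+e_\Delta$ and the induced bound on $\|\Delta v\|$, is where $Z_{\max}$ enters — yields, by continuity, $\|x(t)\|<x_{\max}$ up to and including $t_1$, contradicting $\|x(t_1)\|=x_{\max}$. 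Hence (\ref{eqn_ags96}) holds for all $t>0$; this is semi-global boundedness with respect to the saturation level, since as each $v_{i,\max}$ grows the region described by (i) and (ii) (through $x_{\max}$, $v_0$, $v_{\min}$, $Z_{\max}$) enlarges, while for open-loop stable plants the analogous estimate holds globally, giving the global counterpart.

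With $x(t)$ globally bounded, $\Delta v(t)$ is bounded for all $t$, and integrating the stable dynamics (\ref{eqn_ags84}) against $\Delta v$ using the common pair $(P,Q)$ gives $\|e_\Delta(t)\|\le c\,\sup_{\tau\le t}\|\Delta v(\tau)\|$ for a constant $c$ depending only on $P$, $Q$ and the bound on $K_\Delta$; since $e(t)=e_v(t)+e_\Delta(t)$ with $e_v$ uniformly bounded by Theorem~\ref{gsa-thm6}, the order estimate $\|e(t)\|=\mathcal{O}\big[\sup_{\tau\le t}\|\Delta v(\tau)\|\big]$ follows. I expect the main obstacle to be closing the circular chain $\|x\|\Rightarrow\|v\|\Rightarrow\|\Delta v\|\Rightarrow\|e_\Delta\|\Rightarrow\|x\|$ with constants tight enough that the bound attained at the exit time is strictly below $x_{\max}$, i.e. checking that the precise definitions of $x_{\min}$, $x_{\max}$ and $Z_{\max}$ in (\ref{eqn_ags941}) make all the inequalities consistent; a secondary difficulty is that $R_s(v)$ does not preserve the direction of $v$, so the cross terms in $\tfrac{d}{dt}(x^\mathsf{T}Px)$ must be dominated by magnitude bounds on $\Delta v$ rather than exploiting any favorable inner-product structure.
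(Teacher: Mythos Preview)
Your overall strategy---differentiate $W(x)=x^{\mathsf T}Px$ along the closed loop, show $\dot W<0$ on an annulus, and trap the trajectory by a level-set/first-exit argument---is exactly what the paper does. The framing as ``first exit time for $\|x\|$'' is equivalent to the paper's ``$\mathcal N\subset\mathcal M$ plus $\dot W<0$ on $\mathcal M$'' picture, and your last paragraph on $e=e_v+e_\Delta$ is fine.

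The gap is in how you obtain the displayed inequality
\[
\tfrac{d}{dt}\bigl(x^{\mathsf T}Px\bigr)\le -\bigl(\lambda_{\min}(Q)-(3n+2)Z_B\Theta_{\max}\bigr)\|x\|^2+2Z_B(v_0+r_{\max})\|x\|.
\]
Crude magnitude bounds on $\Delta v$ do \emph{not} produce these particular constants. Starting from $\dot x=A_m x+B\tilde K^{\mathsf T}x-B\Delta v+B_r r$ and using $\|\Delta v\|\le\|v\|\le\|\hat K\|\,\|x\|\le(n{+}1)\Theta_{\max}\|x\|$ gives a coefficient $(2n+4)$ and no $v_0$ term. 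The paper's $(3n+2)$ and the $2Z_Bv_0$ term arise only after a case split: (I) $\Delta v=0$; (II) $\Delta v\neq0$, with the saturated dynamics rewritten as $\dot x=A_m x-BK^{*\mathsf T}x+B\bar v+B_r r$, and then two sub-cases on the sign of $2x^{\mathsf T}PB\bar v$. In sub-case II.b the key device is precisely the one you decided to forgo: decompose $\bar v=v_d+\tilde v$ with $v_d$ direction-preserving and $\|v_d\|\ge\max(\|\tilde v\|,v_{\min})$, use the sign hypothesis to convert $2x^{\mathsf T}PB\bar v$ into $2x^{\mathsf T}PBv+3Z_B\|x\|\,\|v\|\ge0$, and \emph{add} this to $\dot W$. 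That is what yields $(3n+2)$ and the $v_0$ contribution, and hence the specific $x_{\min}$, $Z_{\max}$ in (\ref{eqn_ags941}) that conditions (i)--(ii) are calibrated to. Sub-case II.a is equally essential: there the favorable inner product $2x^{\mathsf T}PB\bar v<-v_{\min}Z_B\|x\|$ is exploited to get $\dot W<0$ for $\|x\|$ \emph{below} a threshold, and that threshold is exactly the definition of $x_{\max}$. In your write-up $x_{\max}$ never appears organically; it is just the number handed to you by the statement.

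So either (a) carry out the case analysis and the $v_d+\tilde v$ decomposition---this is the resolution of the ``$R_s$ does not preserve direction'' difficulty you flag, not a magnitude bound---or (b) keep your simpler magnitude estimate but then you are proving a variant theorem with different constants, and you must separately verify that conditions (i)--(ii), as stated with the paper's $x_{\min},x_{\max},Z_{\max}$, still imply your threshold lies below $x_{\max}/\rho$; for small $n$ this is not automatic.
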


\begin{proof}
We choose a positive definite function $W(x(t))$, as
\begin{equation}\label{eqn_ags98}
\begin{array}{l}
W(x(t))=x^{\mathsf{T}}(t) P x(t),
\end{array}
\end{equation}
and define a level set,$\mathcal{N}$, of $W(x(t))$ as
\begin{equation}\label{eqn_ags99}
\begin{array}{l}
 \mathcal{N}= \left\{x(t) |W(x(t))=\lambda_{\min}(P) x^2_{\max}\right\},
\end{array}
\end{equation}
where $x_{\max}$ is defined in (\ref{eqn_ags94}). We know define region of attraction $\mathcal{M}$ as
\begin{equation}\label{eqn_ags100}
\begin{array}{l}
 \mathcal{M}=\left\{x_{\min}<||x(t)||<x_{\max} \right\}.
\end{array}
\end{equation}
The proof proceeds in two steps. First, we show that condition (ii) in this theorem implies that $\mathcal{N} \subset \mathcal{M}$. Then we show that $\dot{W}(x(t)) < 0$ for all $x(t) \in \mathcal{M}$. Condition (i) of theorem implies that $W(x(0)) < W(\mathcal{N})$. Therefore the results of these two steps show that $ W(x(t)) < W(x(0))$, for all $t>0$, and the Theorem \ref{gsa-thm7} follows directly. Here we show that $\mathcal{N} \subset \mathcal{M}$. From condition (ii), it follows that $\Theta_{\max} < Z_{\max}$. Substituting for $Z_{\max}$ yields
\begin{equation}\label{eqn_ags103}
\begin{array}{l}
 \frac{\rho}{x_{\max}} < \frac{\lambda_{\min}(Q)-(3n+2)Z_B \Theta_{\max} }{ Z_B(2v_{0} + 2 r_{\max}+3 \Theta_{\max}) },
\end{array}
\end{equation}
since by definition $x_{\max} >0$ and also $\rho,~v_{0}, ~r_{\max}, ~\Theta_{\max}$, and $Z_{\max}$ are all positive, hence $(\lambda_{\min}(Q)-(2n+3)Z_B \Theta_{\max}) > 0$. Using the definition of $x_{\min}$ from (\ref{eqn_ags94}) we obtain ${x_{\min}} < \frac{ Z_B(2v_{0} + 2 r_{\max}+3 \Theta_{\max}) }{\lambda_{\min}(Q)-(3n+2)Z_B \Theta_{\max} }$. Hence $\rho x_{\min} < x_{\max}$. In (\ref{eqn_ags98}), $W(x)$ can be lower bounded by $\lambda_{\min}(P)||x(t)||^2 \leq W(x)$, which from (\ref{eqn_ags99}) implies $||x(t)|| \leq x_{\max}$, for all $x(t) \in \mathcal{N}$. In a similar process from equation (\ref{eqn_ags100}), $W(x(t))$ can be upper bounded by $W(x(t)) \leq \lambda_{\max}(P)||x(t)||^2$. From (\ref{eqn_ags99}) and $\rho x_{\min} < x_{\max}$ we obtain $x_{\min} < \frac{1}{\rho} x_{\max} < ||x(t)||$, for all $t>0$. From the definition of $\mathcal{N}$ and $\mathcal{M}$, we conclude that $\mathcal{N} \subset \mathcal{M}$. Now we prove that $\dot{W}(.)<0$, for all $x(t) \in \mathcal{M}$. The first case is when there is no saturation in the control inputs and the second case is when the control inputs are limited by rectangular saturation function.

\textbf{Case I:} $\Delta v(t)=0$ - From Assumption \ref{gsa-ass3}, plant (\ref{eqn_ags821}), and $\tilde{K}(t)=\hat{K}(t)-K^*(t)$, we obtain
\begin{equation}\label{eqn_ags107}
\begin{array}{l}
\dot{x}(t)= A_m(\alpha(t)) x(t) + B \tilde{K}^\mathsf{T}(t)x(t)- B_r r(t), ~~~ \forall \alpha \in \Omega,
\end{array}
\end{equation}
which leads to
\begin{equation}\label{eqn_ags108}
\begin{array}{l}
\dot{W}(.)=x^{\mathsf{T}}(t) ( A^{\mathsf{T}}_m(\alpha(t)) P + P A_m(\alpha(t))) x(t)+ x^{\mathsf{T}}(t)(2PB \tilde{K}^\mathsf{T}(t))x(t) + 2x^{\mathsf{T}}(t) P B_r r(t).
\end{array}
\end{equation}
By tacking bounds on the right hand side of (\ref{eqn_ags108}), we obtain
\begin{equation}\label{eqn_ags109}
\begin{array}{l}
 \dot{W}(.) <  \left(2Z_B \Theta_{\max}-\lambda_{\min}(Q) \right)||x(t)||^2 + 2Z_B r_{\max}||x(t)||.
\end{array}
\end{equation}
From condition (ii) and the definition of $\Theta_{\max}$, we obtain
\begin{equation}\label{eqn_ags110}
\begin{array}{l}
 \Theta_{\max} < Z_{\max} < \frac{\lambda_{\min}(Q)}{Z_B(3n+2)}.
\end{array}
\end{equation}
Therefore
\begin{equation}\label{eqn_ags111}
\begin{array}{l}
 \dot{W}(.) < 0, \\[3pt]
  ||x(t)|| > \frac{2Z_B r_{\max}}{\lambda_{\min}(Q)-2Z_B \Theta_{\max}}.
\end{array}
\end{equation}
The choice of $x_{\min}$ in (\ref{eqn_ags94}) leads to $x_{\min} > \frac{2Z_B r_{\max}}{\lambda_{\min}(Q)-2Z_B \Theta_{\max}}$. Hence it is shown that in Case I, $\dot{W}(.)< 0$, for all $x(t) \in \mathcal{M}$.

\textbf{Case II:} $\Delta v(t) \neq 0$ - Suppose $A(\alpha(t))$ is a Hurwitz for all $\alpha \in \Omega$, and consider the following Lyapunov function candidate for the system dynamics $W_A(x(t))=x^{\mathsf{T}}(t) P_A x(t)$, where $P_A=P_A^{\mathsf{T}} >0$, solves the following LMI
\begin{equation}\label{eqn_ags115}
\begin{array}{l}
A^{\mathsf{T}}(\alpha(t))P_A+P_A A(\alpha(t)) \leq -Q_A, ~~ \forall \alpha \in \Omega,
\end{array}
\end{equation}
for some positive definite $Q_A=Q_A^{\mathsf{T}} >0$. Because $\Delta v(t) \neq 0$, then $R_s(v(t))=\bar{v}(t)$ and the system dynamics in equation (\ref{eqn_ags80}) becomes
\begin{equation}\label{eqn_ags116}
\begin{array}{c}
\dot{x}=A(\alpha(t))x(t) + B \bar{v}(t) + B_r r(t), ~~ \forall \alpha \in \Omega.
\end{array}
\end{equation}
From the definition of $v_0$ in (\ref{eqn_ags94}), we know $||\bar{v}(t)|| \leq v_{0}$. Consequently
\begin{equation}\label{eqn_ags117}
\begin{array}{l}
\dot{W}_A(.)=x^{\mathsf{T}} ( A^{\mathsf{T}}(\alpha(t))P_A + P_A A(\alpha(t))) x(t) + 2 x^{\mathsf{T}}(t) P_A B \bar{v}(t) + 2x^{\mathsf{T}}(t) P_A B_r r(t),\\[5pt]
~~~~ \leq  -\lambda_{\min}(Q_A) ||x(t)||^2 + Z_B \left( 2r_{\max}+2v_0 \right) ||x(t)||.
\end{array}
\end{equation}
For open-loop stable systems it immediately implies that
\begin{equation}\label{eqn_ags118}
\begin{array}{l}
 \dot{W}(.) < 0, ~ ||x(t)|| > \frac{Z_B \left( 2r_{\max}+2v_0 \right)}{\lambda_{\min}(Q_A)}.
\end{array}
\end{equation}
Therefore the system states remain bounded. For unstable systems, that is, when $A(\alpha(t))$ is not Hurwitz, we write the dynamics in the following form
\begin{equation}\label{eqn_ags119}
\begin{array}{c}
\dot{x}(t)=A(\alpha(t))x(t)+B \bar{v}(t) + B_r r(t) + B K^{*\mathsf{T}}(t)x(t) -B K^{*\mathsf{T}}(t) x(t), \\[3pt]
~~~~  =A_m(\alpha(t))x(t) -B K^{*\mathsf{T}}(t) x(t) +B \bar{v}(t) + B_r r(t),      ~~ \forall \alpha \in \Omega.
\end{array}
\end{equation}
Then
\begin{equation}\label{eqn_ags120}
\begin{array}{l}
 \dot{W}(.) \leq -x^\mathsf{T}(t) Q x(t) - 2x^\mathsf{T}(t) PBK^{*\mathsf{T}}(t) x(t)  + 2x^\mathsf{T}(t) PB\bar{v}(t) + 2x^\mathsf{T}(t) PB_r r(t).
\end{array}
\end{equation}
In the following, two sub-cases are considered:
\emph{Sub-case II.a:} $2x^\mathsf{T}(t) PB \bar{v}(t) < -v_{\min} Z_B ||x(t)||$  \\
Using for this sub-case and previously defined bounds, we can bound $\dot{W}(.)$ as
\begin{equation}\label{eqn_ags121}
\begin{array}{l}
 \dot{W}(.) < |\lambda_{\min}(Q)-2Z_B \Theta^*_{\max}| ||x(t)||^2  + \left( 2 Z_B r_{\max}-Z_B v_{\min} \right) ||x(t)||.
\end{array}
\end{equation}
This implies that $\dot{W}(.) < 0, ~ ||x(t)|| \leq \frac{Z_B v_{\min}-2 Z_B r_{\max}}{|\lambda_{\min}(Q)-2Z_B \Theta^*_{\max}|}$. From the definition of $x_{\max}$, we obtain $||x(t)|| \leq  \frac{Z_B v_{\min}-2 Z_B r_{\max}}{|\lambda_{\min}(Q) -2Z_B \Theta^*_{\max}|} < x_{\max}$. Hence we can conclude that
\begin{equation}\label{eqn_ags124}
\begin{array}{l}
 \dot{W}(.) < 0, ~~ \forall x(t) \in \mathcal{M} ~\text{for sub-case II.a}.
\end{array}
\end{equation}
\emph{Sub-case II.b:} $2x^\mathsf{T}PB\bar{v}(t) \geq -v_{\min} Z_B ||x(t)||$  \\
Complexities arise in the stability analysis because the rectangular saturation function does not necessarily preserve the direction of the control inputs as they hit their limits. Therefore as defined in equation (\ref{eqn_ags73}), $\bar{v}(t)$ is decomposed into $v_d(t)$ and $\tilde{v}(t)$ as
\begin{equation}\label{eqn_ags125}
\begin{array}{l}
 \bar{v}(t)= v_d(t)+\tilde{v}(t) = \frac{v(t)}{||v(t)||}||v_d(t)|| + \tilde{v}(t),
\end{array}
\end{equation}
and $v_d(t)$ is chosen such that $||v_d(t)|| \geq \max \left[ ||\tilde{v}(t)||,~ v_{\min} \right]$. The decomposition can be constructed without loss of generality. The condition to this sub-case implies that
\begin{equation}\label{eqn_ags127}
\begin{array}{l}
 2x^\mathsf{T}(t) PB \frac{v(t)}{||v(t)||}||v_d(t)|| + v_{\min} Z_B ||x(t)|| + 2x^\mathsf{T}(t)PB \tilde{v}(t) \geq 0.
\end{array}
\end{equation}
Multiplying $\frac{||v(t)||}{||v_d(t)||}$ in (\ref{eqn_ags127}), we obtain
\begin{equation}\label{eqn_ags128}
\begin{array}{l}
 2x^\mathsf{T}(t) PB v(t) + v_{\min} Z_B ||x(t)|| \frac{||v(t)||}{||v_d(t)||} + 2x^\mathsf{T}(t) PB \tilde{v}(t) \frac{||v(t)||}{||v_d(t)||} \geq 0.
\end{array}
\end{equation}
Since $v_d(t)$ in equation (\ref{eqn_ags125}) is chosen such that $\frac{v_{\min}}{||v_d(t)||}<1$ and $\frac{||\tilde{v}(t)||}{||v_d(t)||}<1$ hold, we have
\begin{equation}\label{eqn_ags129}
\begin{array}{l}
 2x^\mathsf{T}(t) PB v(t) + 3Z_B ||x(t)|| ||v(t)|| \geq 0.
\end{array}
\end{equation}
Adding the inequality (\ref{eqn_ags120}) to the inequality (\ref{eqn_ags129}), we obtain
\begin{equation}\label{eqn_ags130}
\begin{array}{l}
 \dot{W}(.) \leq -x^\mathsf{T}(t) Q x(t) + 2x^\mathsf{T}(t) PB \left( \hat{K}^\mathsf{T}(t)-K^{*\mathsf{T}}(t) \right) x(t)\\[3pt]
  ~~ + 2x^\mathsf{T}(t) PB \bar{v}(t) + 2x^\mathsf{T}(t) PB_r r(t) + 3Z_B ||x(t)|| ||v(t)||.
\end{array}
\end{equation}

Note that $||v(t)|| \leq \Theta^*_{\max} ||x(t)|| \leq n \Theta_{\max} ||x(t)||$, and $||\bar{v}(t)|| \leq v_0$, as a result we have $ \dot{W}(.) \leq \left( (3n+2)Z_B \Theta_{\max} -\lambda_{\min}(Q) \right) ||x(t)||^2 + Z_B \left( 2 v_0 + 2 r_{\max}\right) ||x(t)||$. From equation (\ref{eqn_ags110}), we know $(3n+2)Z_B \Theta_{\max}-\lambda_{\min}(Q) < 0$, and then we have $\dot{W}(.) < 0$ for $||x(t)|| > \frac{Z_B \left(2v_0+2r_{\max}\right)}{\lambda_{\min}(Q)-(3n+2)Z_B \Theta_{\max}} := x_{\min}$. From the definition of $x_{\min}$, we conclude that
\begin{equation}\label{eqn_ags133}
\begin{array}{l}
 \dot{W}(.) < 0, ~~ \forall x(t) \in \mathcal{M} ~\text{for sub-case II.b}.
\end{array}
\end{equation}
As a consequence of (\ref{eqn_ags124}) and (\ref{eqn_ags133}), it follows that $\dot{W}(.) < 0$, for all $x(t) \in \mathcal{M}$.
\end{proof}
\begin{rmk}\label{gsa-rmk4}
Theorem \ref{gsa-thm7} implies that if the initial conditions of the state and the parameter error lie within certain bounds, then the adaptive system will have bounded solutions. The local nature of the result for unstable systems is because of the saturation limits on the control input. For open-loop stable systems the results are global.
\end{rmk}

%%%%%%%%%%%%%%%%%%%%%%%%%%%%%%%%%%%%%%%%%%%%%%%%%%%%%%%%%%%%%%%%%%%%%%%%%%%%%%%%%%%%%%%%%%%%%%%%%%%%%%%%%%%%%%%%
%%%%%%%%%%%%%%%%%%%%%%%%%%%%%%%%%%%%%%%%%%%%%%%%%%%%%%%%%%%%%%%%%%%%%%%%%%%%%%%%%%%%%%%%%%%%%%%%%%%%%%%%%%%%%%%%
\subsection{Turboshaft Engine Example}
%%%%%%%%%%%%%%%%%%%%%%%%%%%%%%%%%%%%%%%%%%%%%%%%%%%%%%%%%%%%%%%%%%%%%%%%%%%%%%%%%%%%%%%%%%%%%%%%%%%%%%%%%%%%%%%%
We apply the developed adaptive controller to a high fidelity physics-based model of JetCat SPT5 turboshaft engine driving a variable pitch propeller developed in \cite{fitzgerald-model-2013, pakmehr-decentmodel-2011}. The effect of engine degradation due to aging is modeled in the nonlinear simulation by modifying the efficiencies and flow capacities of key engine components such as: High Pressure Compressor (HPC), High Pressure Turbine (HPT) and Low Pressure Turbine (LPT). The values of these parameters used in this simulation are $\eta_{hpc}=-1.470 \%$, $W_{c,hpc}=-2.455 \%$, $\eta_{hpt}=-1.315 \%$, $W_{c,hpt}=+0.880 \%$, $\eta_{lpt}=-0.269 \%$, and $W_{c,lpt}=+0.1294 \%$, where their nominal values are zero. To show the stability of the closed-loop reference system, 40 different linearizations of the system have been used, to solve inequality (\ref{eqn_gs115}), in Matlab with the aid of YALMIP \cite{YALMIP-lofberg-2004} and SeDuMi \cite{sedumi-Sturm-2001} packages. The numerical value for the common matrix $P$ is
\begin{eqnarray*}
P = \left[
    \begin{array}{cccccc}
        0.491  &  0.079  &  0.102  & -0.004  & -0.072 & -0.039\\
        0.079  &  0.446  &  0.053  &  0.007  & -0.097 & -0.013\\
        0.102  &  0.053  &  0.181  & -0.041  & -0.028 & -0.022\\
       -0.004  &  0.007  & -0.041  &  0.130  &  0.023 &  0.013\\
       -0.072  & -0.097  & -0.028  &  0.023  &  0.321 &  0.045\\
       -0.039  & -0.013  & -0.022  &  0.013  &  0.045 &  0.332
    \end{array}
    \right],
\end{eqnarray*}
where its condition number is $\kappa(P)=6.6303$, and $Q=0.1 \times I_6$. Simulations are conducted for two different cases including the control of the nominal model (NomEng), and the control of the deteriorated engine due to aging (AgedEng). These case studies, simulate the engine acceleration from the idle thrust to the cruise condition and then its deceleration back to the idle condition for a standard day at sea level condition. Simulation results are shown in Figures \ref{comp_map_stab_gsa} to \ref{fig_gsa_16}.

Figure \ref{comp_map_stab_gsa} shows the JetCat SPT5 turboshaft engine compressor map. In this map the approximate stall line and also the operating line for this simulation have been shown. The engine operates in a safe region with a big stall margin during its acceleration from idle to cruise and again during its deceleration back to the idle condition. The 40 points which are used for linearization and stability analysis of the closed-loop system also have been shown in this figure. Thirty of these points correspond to equilibrium linearizations which are situated on the steady-state operating line of the engine, and the other 10 points correspond to non-equilibrium linearizations which are situated near the steady-state operating line of the engine. The engine operating lines for the nominal engine and degraded engine model are shown in this figure; as degradation increases in the engine, the stall margin decreases, the pressure ratio decreases and the turbine temperature increases.
\begin{figure}[!ht]
\centering
\includegraphics[width=0.7\textwidth]{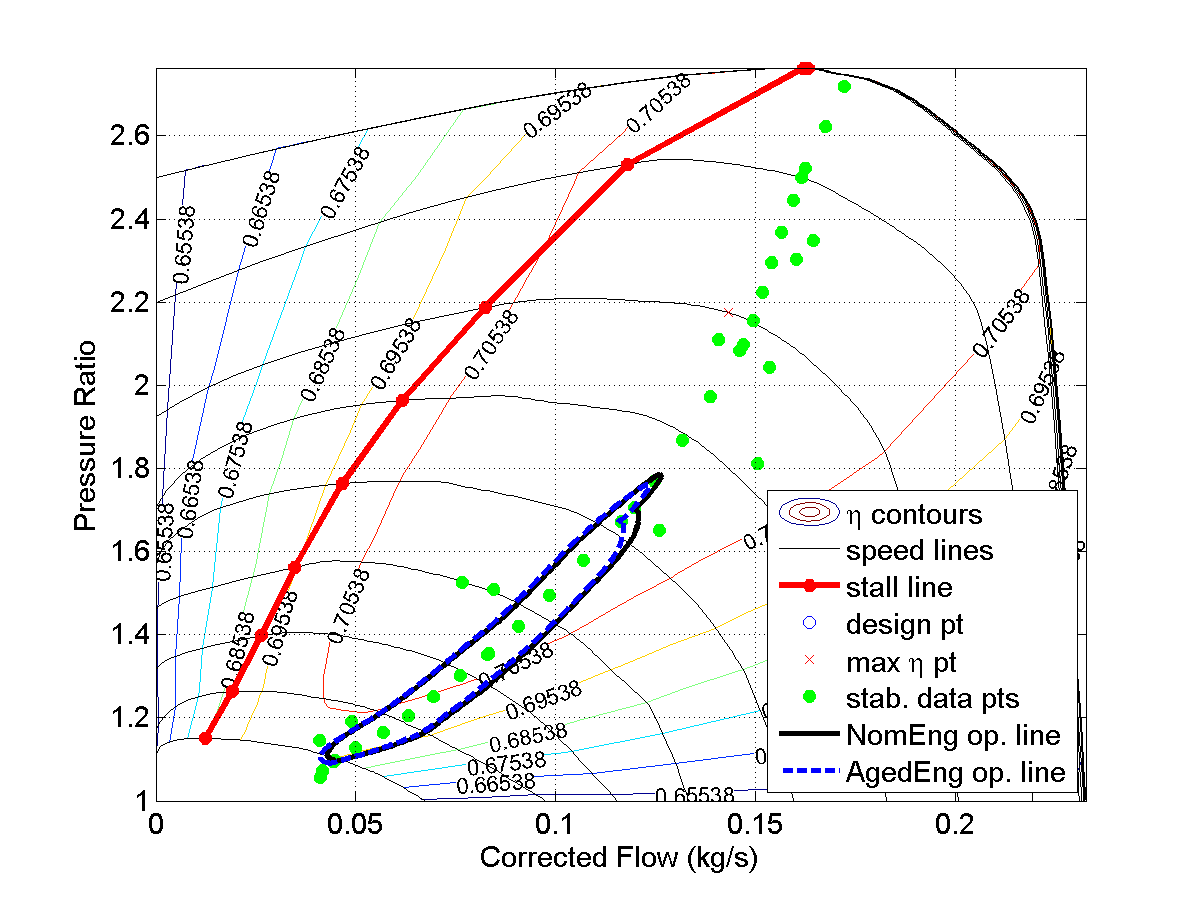}
\caption{JetCat SPT5 engine compressor map with data points used to compute $P$ and operating line for nominal engine, and deteriorated engine.}\label{comp_map_stab_gsa}
\end{figure}

Other controller parameters are $\epsilon_c=1,~\eta_c=3$. The numerical values for the adaptive controller are set as $\Gamma =\text{diag}([50,~50,~50,~50,~50,~50])$, $\Gamma_{\Delta} =\text{diag}([30,~30])$, $v_{1,\max}=0.12, ~v_{2,\max}=0.15$, and the initial conditions and the compact sets are
\begin{equation} \label{eqn2_agsf6}
\begin{array}{c}
     \Theta_{K_i}= \left[
       \begin{array}{cc}
          $[-2,  0]$   &  $[-2,  0]$ \\
          $[-2,  0]$   &  $[-2,  0]$
       \end{array}
    \right], \\[5pt]
    \Theta_{\Delta} = \left[
       \begin{array}{cccccc}
           $\{0\}$ & $\{0\}$ & $[0, 10]$ &  $\{0\}$  &  $\{0\}$ & $\{0\}$ \\
           $\{0\}$ & $\{0\}$ &  $\{0\}$  & $[0, 10]$ &  $\{0\}$ & $\{0\}$
       \end{array}
       \right]^{\mathsf{T}}, \\[5pt]
   \hat{K}_i(0)= \left[
       \begin{array}{cc}
          -0.1950   &  -0.1950 \\
          -0.1970   &  -0.1970
       \end{array}
    \right], \\[5pt]
    K_{\Delta}(0)= \left[
       \begin{array}{cccccc}
           0 & 0 & 2.7 &  0  &  0& 0 \\
           0 & 0 &  0  & 2.7 &  0& 0
       \end{array}
       \right]^{\mathsf{T}}.
\end{array}
\end{equation}
Figures \ref{fig_gsa_07} and \ref{fig_gsa_08} show the high and low pressure spool speeds tracking their reference trajectories closely.
\begin{figure}[!ht]
\centering
\begin{minipage}[b]{0.49\textwidth}
\centering
\includegraphics[width=0.99\textwidth]{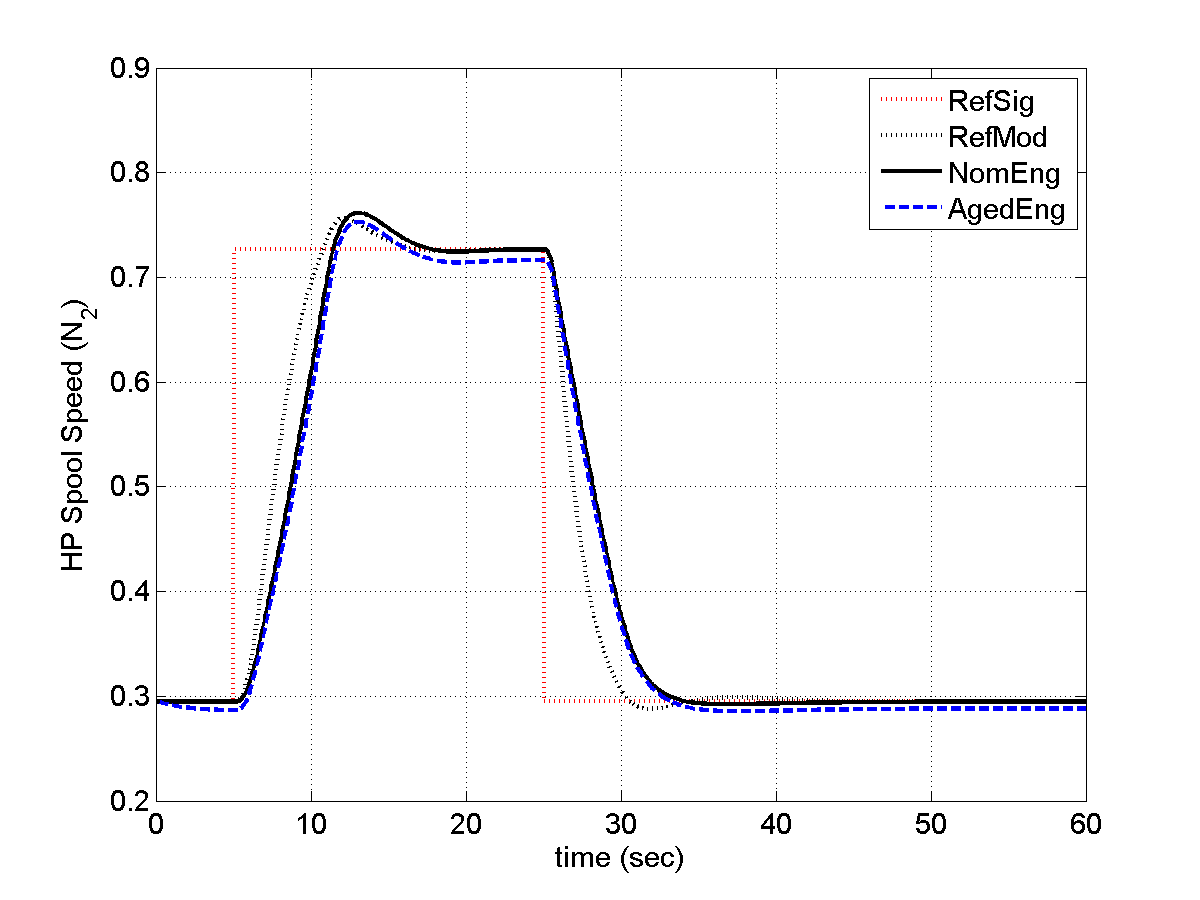}
\caption{High pressure spool speed and its reference signal}\label{fig_gsa_07}
\end{minipage}
\hfill
\begin{minipage}[b]{0.49\textwidth}
\centering
\includegraphics[width=0.99\textwidth]{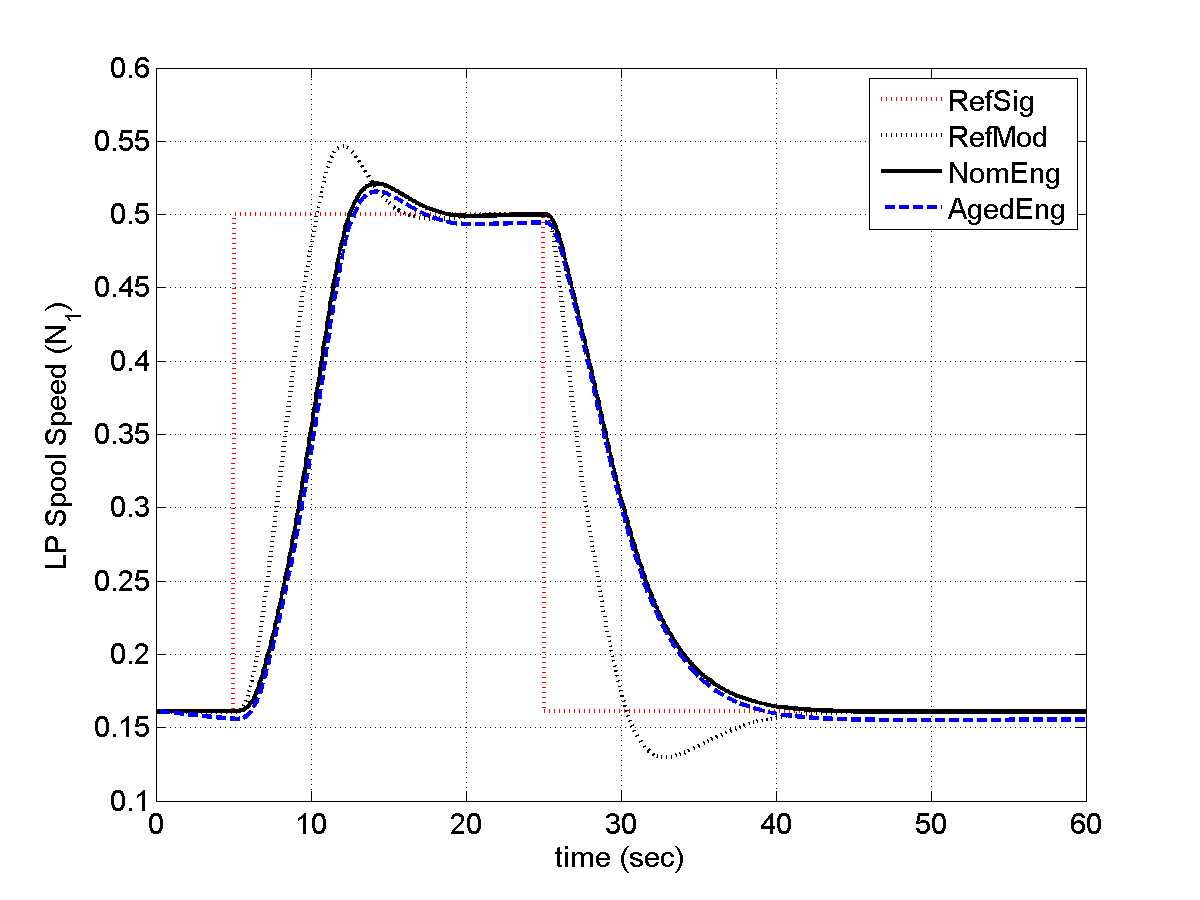}
\caption{Low pressure spool speed and its reference signal.}\label{fig_gsa_08}
\end{minipage}
\end{figure}

\begin{figure}[!ht]
\centering
\includegraphics[width=0.5\textwidth]{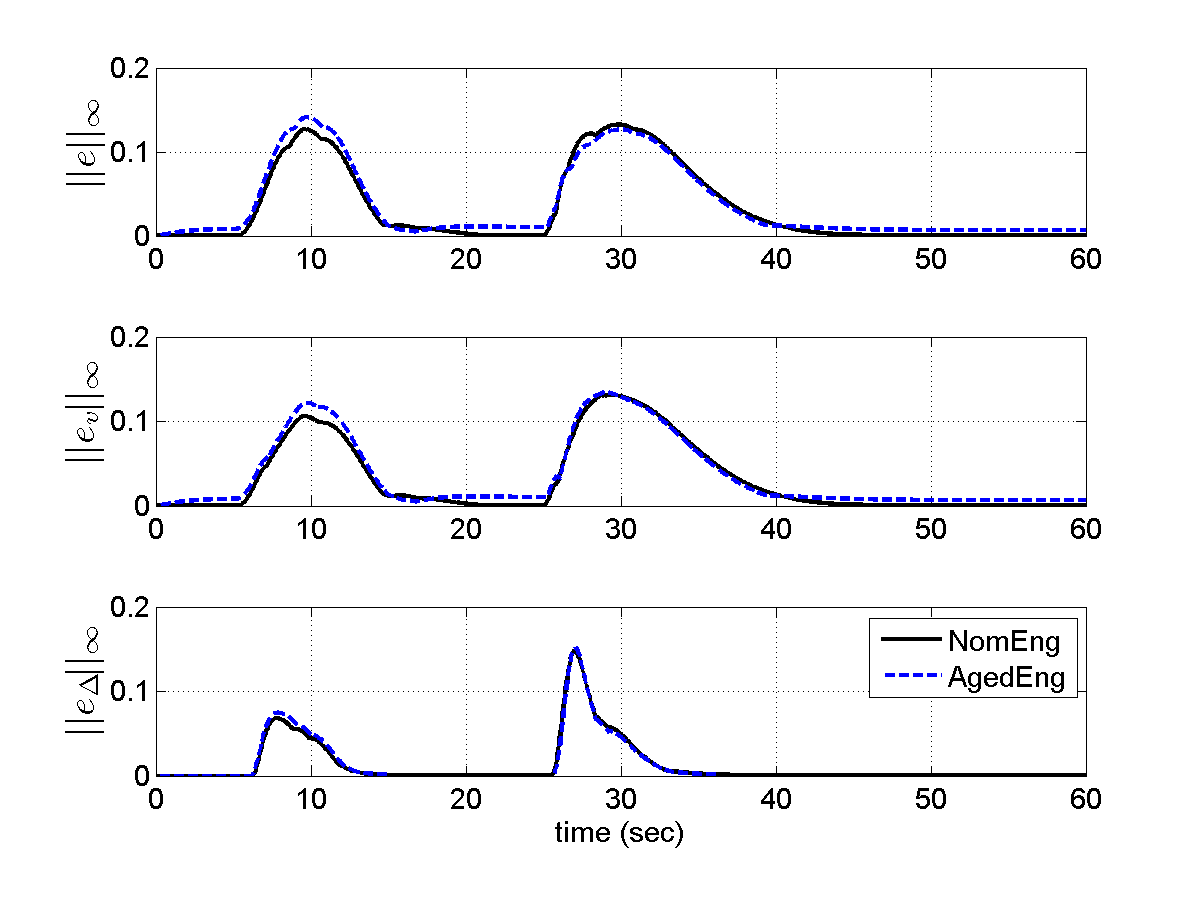}
\caption{Norm of the error signals $||e(t)||_{\infty}$, $||e_v(t)||_{\infty}$, $||e_{\Delta}(t)||_{\infty}$ }\label{fig_gsa_10}
\end{figure}

Figure \ref{fig_gsa_10}, shows the evolution of the infinity norm of the errors $||e(t)||_{\infty}$, $||e_v(t)||_{\infty}$, $||e_{\Delta}(t)||_{\infty}$. The steady-state error in the Aged Engine (AgedEng) simulation case is because of the effect of the aging on the engine health parameters, and this causes a change in the equilibrium manifold of the aged engine in comparison to the nominal engine (NomEng). In other words, since we are using nominal engine equilibrium manifold to design a linear parameter dependant reference model, and the aged engine linear model has a different equilibrium manifold $x^p_{e, nom}(\alpha(t))\neq x^p_{e, aged}(\alpha(t))$, and $u_{e, nom}(\alpha(t))\neq u_{e, aged}(\alpha(t))$, then $\delta x^p_{aged}(t)= x^p_{aged}(t)-x^p_{e, aged}(\alpha(t)) \neq 0$, and $\delta u_{aged}(t)= u_{aged}(t)-u_{e, aged}(\alpha(t)) \neq 0$, and this means $||\delta x_{aged}(t)|| > \delta x_{\min} \neq 0$ for all $t > 0$, hence, there will be a steady-state error value greater than zero.

\begin{figure}[!ht]
\centering
\begin{minipage}[b]{0.49\textwidth}
\centering
\includegraphics[width=0.99\textwidth]{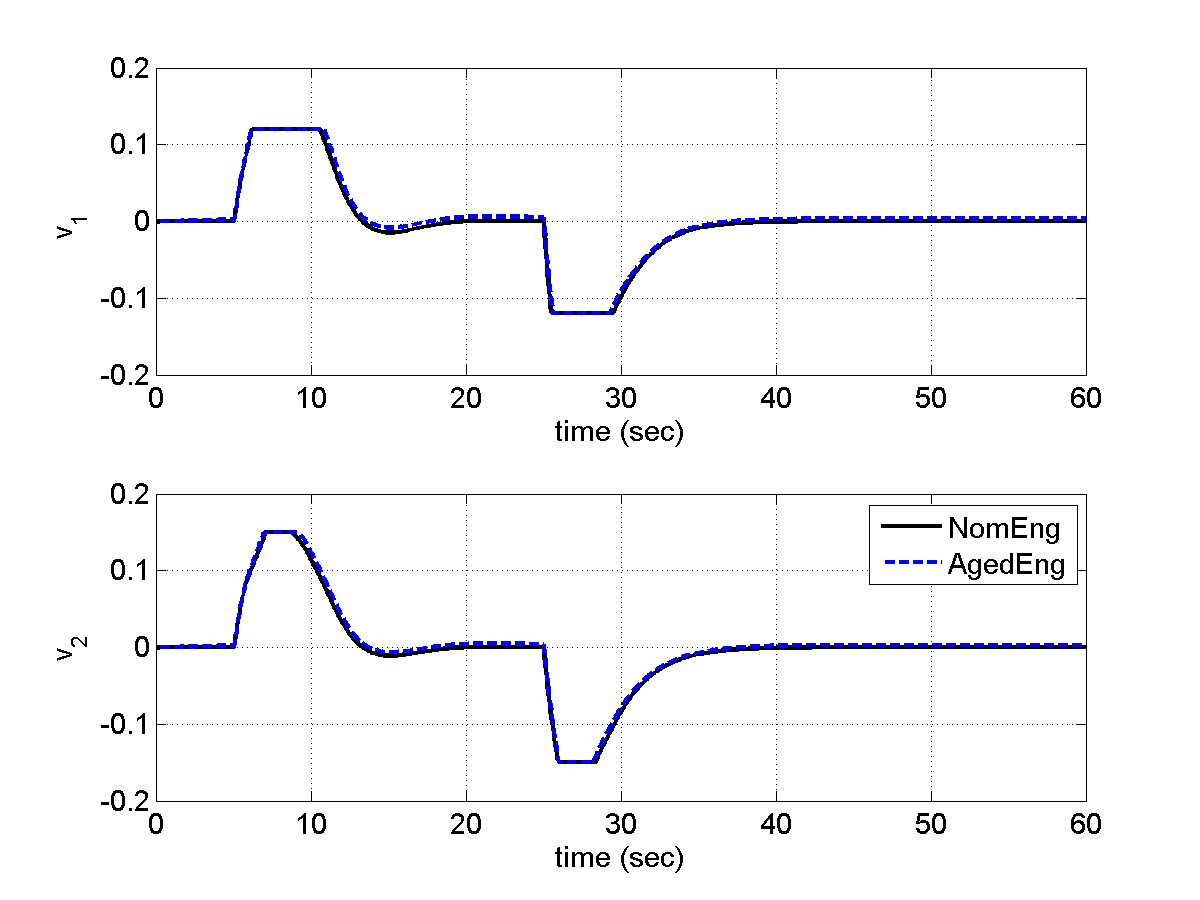}
\caption{Control inputs to the augmented system ($v(t)$)}\label{fig_gsa_11}
\end{minipage}
\hfill
\begin{minipage}[b]{0.49\textwidth}
\centering
\includegraphics[width=0.99\textwidth]{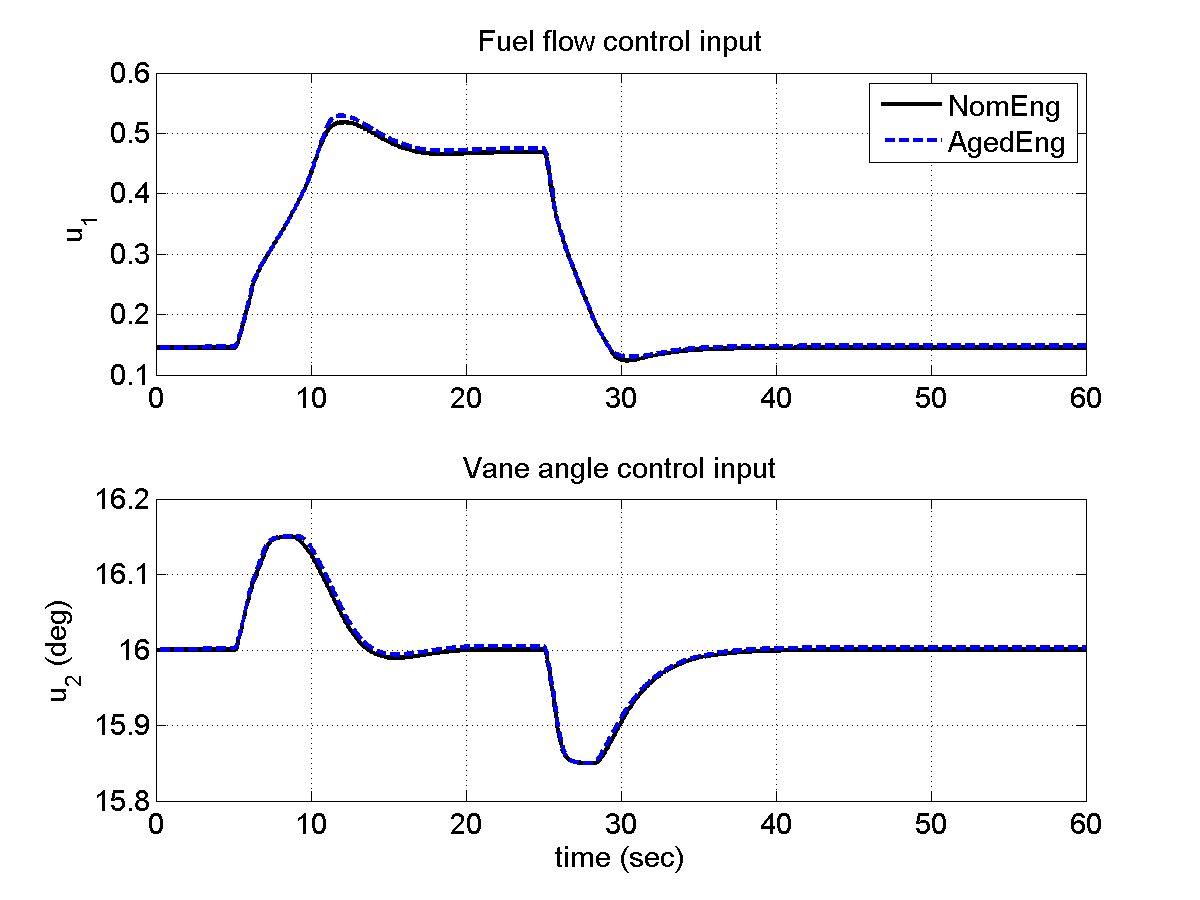}
\caption{Fuel and prop pitch angle control inputs ($u(t)$)}\label{fig_gsa_14}
\end{minipage}
\end{figure}

\begin{figure}[!ht]
\centering
\begin{minipage}[b]{0.49\textwidth}
\centering
\includegraphics[width=0.99\textwidth]{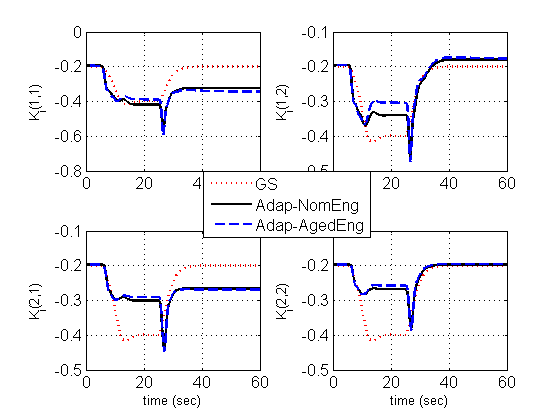}
\caption{Integral gain matrix elements for the gain scheduling controller ($K_i(\alpha)$), and for the adaptive controller ($\hat{K}_i(t)$)}\label{fig_gsa_15}
\end{minipage}
\hfill
\begin{minipage}[b]{0.49\textwidth}
\centering
\includegraphics[width=0.99\textwidth]{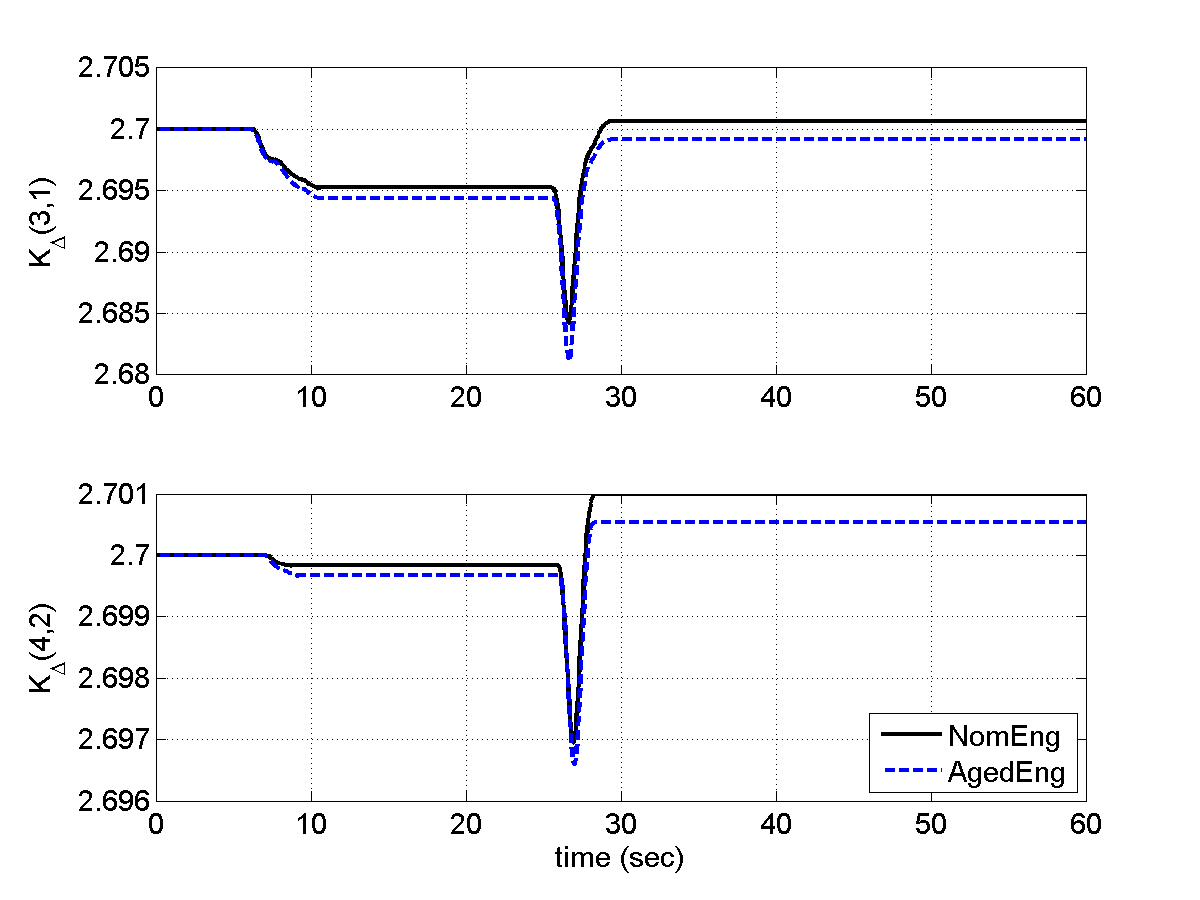}
\caption{Nonzero elements of the augmented adaptive parameter for the saturated system ($K_{\Delta}(t)$)}\label{fig_gsa_16}
\end{minipage}
\end{figure}

Figure \ref{fig_gsa_11} shows the evolution of the control inputs $v(t)=[v_1(t), v_2(t)]^T$, which are inputs to the augmented system, each element is corresponding to one of the control inputs to the original system. For better performance and also to keep the engine in the safe range of operation, hard limits have been defined for both augmented control inputs, $|v_i| \leq v_{i,\max}$ for $i=1,2$. Figure \ref{fig_gsa_14} shows the histories of fuel flow and propeller pitch angle as the control inputs to the plant. Figures \ref{fig_gsa_15} shows the evolution of the gain scheduling controller integral gain matrix ($K_i(\alpha)$) and also adaptive controller gain matrix ($\hat{K}_i(t)$). Figure \ref{fig_gsa_16} shows the evolution of the nonzero elements of the augmented adaptive parameter for the saturated system ($K_{\Delta}(t)$). 

For the cases where engine overspeed, compressor surge/stall, and/or turbine temperature limits are of concern, the adaptive controller with constrained control inputs could potentially be used to mitigate these issues. In order to keep the gas turbine engine to run within its safe operating envelope, proper selection of the constraints on the control inputs, based on the knowledge of the engine dynamics, is needed.  

%%%%%%%%%%%%%%%%%%%%%%%%%%%%%%%%%%%%%%%%%%%%%%%%%%%%%%%%%%%%%%%%%%%%%%%%%%%%%%%%%%%%%%%%%%%%%%%%%%%%%%%%%%%%%%%%
%%%%%%%%%%%%%%%%%%%%%%%%%%%%%%%%%%%%%%%%%%%%%%%%%%%%%%%%%%%%%%%%%%%%%%%%%%%%%%%%%%%%%%%%%%%%%%%%%%%%%%%%%%%%%%%%
\newpage
\subsection{Conclusions}
%%%%%%%%%%%%%%%%%%%%%%%%%%%%%%%%%%%%%%%%%%%%%%%%%%%%%%%%%%%%%%%%%%%%%%%%%%%%%%%%%%%%%%%%%%%%%%%%%%%%%%%%%%%%%%%%
Using convex optimization tools, a single quadratic Lyapunov function was computed, which guarantees the stability of the gain scheduled gas turbine engine reference model. Stability analysis was performed for the developed adaptive control with constrained control inputs architecture by proving the ultimate boundedness of the error signal. Sufficient conditions for ultimate boundedness of the closed-loop system were derived. A semi-global stability result was proved with respect to the level of saturation for open-loop unstable plants while the stability result becomes global for open-loop stable plants. Simulations result for adaptive control of JetCat SPT5 turboshaft engine physics-based model with some degradation due to aging, shows that the proposed adaptive controller tracks the reference model.

%%%%%%%%%%%%%%%%%%%%%%%%%%%%%%%%%%%%%%%%%%%%%%%%%%%%%%%%%%%%%%%%%%%%%%%%%%%%%%%%%%%%%%%%%%%%%%%%%%%%%%%%%%%%%%%%
%%%%%%%%%%%%%%%%%%%%%%%%%%%%%%%%%%%%%%%%%%%%%%%%%%%%%%%%%%%%%%%%%%%%%%%%%%%%%%%%%%%%%%%%%%%%%%%%%%%%%%%%%%%%%%%%
\section {Adaptive Control of Systems with Gain Scheduled Reference Models - Part III: Decentralized Approach}
%%%%%%%%%%%%%%%%%%%%%%%%%%%%%%%%%%%%%%%%%%%%%%%%%%%%%%%%%%%%%%%%%%%%%%%%%%%%%%%%%%%%%%%%%%%%%%%%%%%%%%%%%%%%%%%

%%%%%%%%%%%%%%%%%%%%%%%%%%%%%%%%%%%%%%%%%%%%%%%%%%%%%%%%%%%%%%%%%%%%%%%%%%%%%%%%%%%%%%%%%%%%%%%%%%%%%%%%%%%%%%%%
%%%%%%%%%%%%%%%%%%%%%%%%%%%%%%%%%%%%%%%%%%%%%%%%%%%%%%%%%%%%%%%%%%%%%%%%%%%%%%%%%%%%%%%%%%%%%%%%%%%%%%%%%%%%%%%%
\subsection {Introduction}
%%%%%%%%%%%%%%%%%%%%%%%%%%%%%%%%%%%%%%%%%%%%%%%%%%%%%%%%%%%%%%%%%%%%%%%%%%%%%%%%%%%%%%%%%%%%%%%%%%%%%%%%%%%%%%%

During the past decades there has been a growing interest in decentralized adaptive control \cite{decent-ioannou-1986, decent-gavel-1989, decentbook-siljak-1990, decent-shi-1992, solution-ortega-1993, DecentCont-siljak-1996, CoorDecent-Hovakimyan-2005, decent-narendra-2006}. The problem deals with a system composed of $N$ subsystems $S_k$, each of whose inputs is chosen by $N$ controllers $C_k$, where $k = 1, 2, ..., N$. The parameters of the subsystems are assumed to be unknown, and the controllers have to generate their inputs adaptively, using all information available to them, to achieve some desired objectives. 

Control theoretic concepts for gain scheduled model reference adaptive control of gas turbine engines have been developed in \cite{GSstability-pakmehr-2013, PhDThesis-pakmehr-2013}. Since controlling the systems which operate in large operating envelopes, such as gas turbine engines, is not practical close to just one operating point, there is a need to use linear parameter dependent models that cover the entire operating envelope of the system (i.e. control the system for multiple operating points). The contribution of this section is the development of a decentralized adaptive control approach for systems with gain scheduled reference models; the controller can be used to control the dynamical systems with multiple subsystems over large operating envelopes for a continuum of equilibria. The decentralized adaptive control design developed here is based on the results from \cite{GSstability-pakmehr-2013, PhDThesis-pakmehr-2013}. The developed decentralized controller, then is applied to a high fidelity physics-based model of a JetCat SPT5 turboshaft engine with two subsystems. Using this architecture, we can match different engine cores to different props, and the whole propulsion system could work without anymore performance tuning. Simulation results show that the gas turbine engine with two subsystems (i.e., engine core and engine propeller) can be controlled for large throttle commands in a stable manner and with proper tracking performance. 
 
The rest of this section is organized as follows. In subsection II, decentralized linear parameter dependent modeling is presented. In subsection III, the decentralized adaptive control for systems with gain scheduled reference systems is presented. Then, uniform ultimate boundedness of the error signals for all the subsystems of the decentralized system is proven. In subsection IV, simulation results are presented. The simulations studies the efficiency of the developed decentralized adaptive control architecture for controlling the system with a new engine core subsystem along with the nominal engine prop subsystem. Subsection V, concludes this section. 

%%%%%%%%%%%%%%%%%%%%%%%%%%%%%%%%%%%%%%%%%%%%%%%%%%%%%%%%%%%%%%%%%%%%%%%%%%%%%%%%%%%%%%%%%%%%%%%%%%%%%%%%%%%%%%%

%%%%%%%%%%%%%%%%%%%%%%%%%%%%%%%%%%%%%%%%%%%%%%%%%%%%%%%%%%%%%%%%%%%%%%%%%%%%%%%%%%%%%%%%%%%%%%%%%%%%%%%%%%%%%%%
\subsection{Decentralized Linear Parameter Dependent Modeling}
%%%%%%%%%%%%%%%%%%%%%%%%%%%%%%%%%%%%%%%%%%%%%%%%%%%%%%%%%%%%%%%%%%%%%%%%%%%%%%%%%%%%%%%%%%%%%%%%%%%%%%%%%%%%%%%
Here, a decentralized version of plant (\ref{eqn_gs111}) is described. Each one of the subsystems is modeled as a single input, single output (SISO) sub-plant. Each subsystem with its filtered input and its controller can be defined as
\begin{equation} \label{eqn_dgsa1111}
\begin{array}{l}
  \underbrace{\left[
       \begin{array}{c}
           \dot{x}_k^p(t) \\
           \dot{u}_k(t) \\
           \dot{x}_k^c(t)
       \end{array}
    \right]}_{\dot{x}_k} =
     \underbrace{\left[
       \begin{array}{c}
           f_k^p(x^p(t),u(t)) \\
           -\eta_c u_k(t)  \\
           f_k^c(x_k^c(t),g_k^p(x_k^p(t),u_k(t)) ,r_k(t))
       \end{array}
    \right]}_{f_k(x_k(t),x_q(t),r_k(t))} +  \underbrace{\left[
       \begin{array}{c}
           0 \\
           \eta_c \\
           0
       \end{array}
    \right]}_{b_k} v_k(t), \\[5pt]
v_k(t)=\underbrace{g_k^c(x_k^c(t),g_k^p(x_k^p(t),u_k(t)), r_k(t))}_{g_k(x_k(t),r_k(t))},
\end{array}
\end{equation}
and the closed-loop nonlinear subsystem can be written as
\begin{equation}\label{eqn_dgsa1121}
\begin{array}{l}
\dot{x}_k(t)=F_k(x_k(t),x_q(t), r_k(t)),
\end{array}
\end{equation}
where $x_k(t) \in D_{x_k} \subset \Re^{n_k+2}$, and $r_k(t) \in D_{r_k} \subset \Re$, and $x_q(t) \in D_{x_q}$ includes all the states from the other subsystems interconnecting with the $k$th subsystem. Now, similar to controller (\ref{eqn_gs65}), for all $\alpha \in \Omega$, the parameter dependent controller for each subsystem is defined as
\begin{equation} \label{eqn_dgsa651}
\begin{array}{l}
  \left[
       \begin{array}{c}
           \dot{x}_k^c(t) \\
            v_k(t)
       \end{array}
    \right] =
     \left[
       \begin{array}{ccc}
           -\epsilon_c  & 1 &  -1\\
           k_{i,k}(\alpha(t)) & 0 & 0
       \end{array}
    \right] ~
     \left[
       \begin{array}{c}
            x_k^c(t) \\
            \delta y_k(t) \\
            \delta r_k(t)
       \end{array}
    \right].
\end{array}
\end{equation}
 The linear family of systems for the augmented subsystem (\ref{eqn_dgsa1111}) becomes
\begin{equation}\label{eqn_dgsmrac1}
\begin{array}{l}
\delta \dot{x}_k(t) = A_k(\alpha(t)) \delta x_k(t) + b_k v_k(t) + b_{r_k}\delta r_k(t) + \underbrace{\sum_{q=1, q\neq k}^{N} \left [A_{kq}(\alpha(t)) \delta x_q(t)\right]}_{h_k(\delta x_q(t),\alpha(t))}, ~~~\forall \alpha \in \Omega, \\[5pt]
\delta y_k(t)=C_k\delta x_k(t),
\end{array}
\end{equation}
with the state feedback controller
\begin{equation}\label{eqn_dgsmrac1111}
\begin{array}{l}
v_k(t)= K_k^{\mathsf{T}}(\alpha(t)) \delta x_k(t), ~~~\forall \alpha \in \Omega,
\end{array}
\end{equation}
where $\delta x_k(0)=\delta x_{0_k}$, and $\delta x_k(t) \in \Re^{n_k+2}$ is the $k$th subsystem state vector, $v_k(t) \in \Re$ is the $k$th subsystem control input, and $K_k^{\mathsf{T}}(\alpha(t)) \in \Re^{n_k+2}$ is the vector of parameter dependent control gains for subsystem $k$, and $\delta r_k(t) \in \Re$ is the $k$th subsystem reference signal. $h_k(\delta x_q(t),\alpha(t))$ is the interconnection of all other subsystems on the $k$th subsystem. Subscript \textbf{k} represents the $k$th subsystem, where $k \in \{1, ... , N\}$; in turboshaft engine control example $k \in \{Co,Pr\}$. To design a reference model for each subsystem, we ignore the effects of the interconnection terms from other subsystems and for a desired performance, we find out the specific controller $K_k(\alpha(t))=[0,~0,~k_{i,k}(\alpha)]^\mathsf{T}$; as a result we obtain the following closed-loop system
\begin{equation} \label{eqn_dgsa1113}
\begin{array}{l}
  \underbrace{\left[
       \begin{array}{c}
           \delta \dot{x}_k^p(t) \\
           \delta \dot{u}_k(t) \\
            \dot{x}_k^c(t)
       \end{array}
    \right]}_{\delta \dot{x}_{m,k}(t)} =
     \underbrace{\left[
       \begin{array}{ccc}
           A_k^p(\alpha(t)) &~~~ b_k^p(\alpha(t)) &~~~ 0 \\
           0 &~~~ -\eta_c  & ~~~ \eta_c k_{i,k}(\alpha(t)) \\
           1      &~~~ 0  &~~~ -\epsilon_c
       \end{array}
    \right]}_{A_{m,k}(\alpha(t))} 
     \underbrace{\left[
       \begin{array}{c}
           \delta x_k^p(t) \\
           \delta u_k(t) \\
            x_k^c(t)
       \end{array}
    \right]}_{\delta x_{m,k}(t)} +\underbrace{\left[
       \begin{array}{c}
            0 \\
            0 \\
           - 1
       \end{array}
    \right]}_{b_{r_k}} \delta r_k(t), ~~ \forall \alpha \in \Omega.
\end{array}
\end{equation}
The stability of reference model for each subsystem is guaranteed by Lemma \ref{gsa_lem1}.
\begin{rmk}\label{dgsma_rmk_21}
Using pre-designed linear controllers available for important operating points of the system, $K^{\mathsf{T}}_{i,k}(\alpha(t))$ can be obtained based on a stability preserving interpolation approach described in \cite{interp-stilwell-2000} with respect to the scheduling parameter $\alpha$ in a smooth, continuous way. An approach by which the interpolated controller stabilizes the linearized plant for all $\alpha \in \Omega$. Another approach is to compute $K^{\mathsf{T}}_{i,k}(\alpha(t))$ by polynomial approximation as a function of $\alpha$.
\end{rmk}
%%%%%%%%%%%%%%%%%%%%%%%%%%%%%%%%%%%%%%%%%%%%%%%%%%%%%%%%%%%%%%%%%%%%%%%%%%%%%%%%%%%%%%%%%%%%%%%%%%%%%%%%%%%%%%%
%%%%%%%%%%%%%%%%%%%%%%%%%%%%%%%%%%%%%%%%%%%%%%%%%%%%%%%%%%%%%%%%%%%%%%%%%%%%%%%%%%%%%%%%%%%%%%%%%%%%%%%%%%%%%%%
\subsection{Decentralized Adaptive Control}
%%%%%%%%%%%%%%%%%%%%%%%%%%%%%%%%%%%%%%%%%%%%%%%%%%%%%%%%%%%%%%%%%%%%%%%%%%%%%%%%%%%%%%%%%%%%%%%%%%%%%%%%%%%%%%%

\subsubsection{Control Design and Stability Analysis}
%%%%%%%%%%%%%%%%%%%%%%%%%%%%%%%%%%%%%%%%%%%%%%%%%%%%%%%%%%%%%%%%%%%%%%%%%%%%%%%%%%%%%%%%%%%%%%%%%%%%%
Consider a system $S$ consists of $N$ subsystems $S_1, S_2,..., S_N$ that are interconnected. Each of the subsystems is modeled as a single input, single output (SISO) linear parameter dependent model. For convenience, we shall assume that each subsystem $S_k$ has a controller $C_k$ which computes the control input $u_k$ to $S_k$. The subsystems $S_k$ are described by the equations
\begin{equation}\label{eqn_dgsmrac6}
\begin{array}{l}
S_k: \delta \dot{x}_k(t) = A_k(\alpha(t)) \delta x_k(t) + b_k v_k(t) + b_{r_k} \delta r_k(t) + \underbrace{\sum \limits_{q=1, q\neq k}^{N} \left [A_{kq}(\alpha(t)) \delta x_q(t)\right]}_{h_k(\delta x_q(t),\alpha(t))}, ~~~\forall \alpha \in \Omega,\\[5pt]
\delta y_k(t)=C_k \delta x_k(t),
\end{array}
\end{equation}
where $\delta x_k(0)=\delta x_{0_k}$, and $\delta x_k(t) \in \Re^{n_k}$ is the $k$th subsystem state vector, $v_k(t) \in \Re$ is the $k$th subsystem control input, and $\delta r_k(t) \in \Re$ is the $k$th subsystem reference signal. $h_k(\delta x_q(t),\alpha(t))$ is the interconnection of all other subsystems on the $k$th subsystem. Note that $\delta x(t)=[\delta x_1^{\mathsf{T}}(t), ..., \delta x_k^{\mathsf{T}}(t)..., \delta x_N^{\mathsf{T}}(t)]^{\mathsf{T}}$. Subscript \textbf{k} represents the $k$th subsystem, where $k \in \{1, ... , N\}$.
\begin{ass} \label{dgsa_ass2}
For the interconnection term $h_k(\delta x_q(t),\alpha(t))$, there exist positive constants $c_{kq} \in \Re$, for each subsystem $q\neq k$, such that, it is satisfying $\left\|h_k(\delta x_q(t),\alpha(t))\right\| \leq \sum \limits_{q=1, q\neq k}^{N} \left[ c_{kq} ||\delta x_q(t)|| \right]$, for all $\alpha \in \Omega$. 
\end{ass}
\begin{rmk}\label{gsa_rmk23}
This assumption is a result of Assumption \ref{gsa_ass1}, which is about the boundedness of $A_m(\alpha(t))$.
\end{rmk}
\begin{rmk}\label{gsa_rmk22}
The feasibility of this assumption has already been verified in \cite{PhDThesis-pakmehr-2013} by numerical simulation studies, for gas turbine engine applications which we consider as the main application of this work.  For other systems, modeling and numerical studies are also needed for such verification.
\end{rmk}
The linear parameter varying reference model for the $k$th subsystem is expressed as
\begin{equation}\label{eqn_dgsmrac8}
\delta \dot{x}_{m,k}(t) = A_{m,k}(\alpha(t)) \delta x_{m,k}(t) + b_{r_k} \delta r_k(t),  ~~~ \forall \alpha \in \Omega,
\end{equation}
where $\delta r_k(t) \in \Re$ is a bounded continuous reference input signal. The parameter matrix $A_{m,k} \in \Re^{n_k \times n_k}$ is chosen with $A_{m,k}$ being Hurwitz. The boundedness of all the reference trajectories is required in a decentralized tracking control problem, which has been showed in the previous section. Note that $\delta r_k(t) \in \Re$ is the command signal such that $||\delta r_k(t)|| \leq r_{\max, k}$.

The decentralized adaptive control of a linear parameter dependent systems can be stated as follows: Given $N$ subsystems described by (\ref{eqn_dgsmrac6}), and $N$ reference models described by (\ref{eqn_dgsmrac8}), and assuming that controller $C_k$ of $S_k$ can generate an input $v_k(t)$ such that all the signals in the system are bounded and $lim_{t \rightarrow \infty} \left\|\delta x_k(t)-\delta x_{m,k}(t) \right\|=0$. Since the effect of the interactions of subsystems on each other is bounded, we can use the following adaptive state feedback controller for each subsystem
\begin{equation}\label{eqn_dgsmrac10}
C_k:~ v_k(t) = \hat{K}_k^{\mathsf{T}}(t) \delta x_k(t),\\
\end{equation}
with $\hat{K}_k(t) \in \Re^{n_k}$ is the time-varying estimate of the nominal controller parameters $K_k^{*}(t)$.
\medskip
\begin{ass} \label{dgsa_ass3}
For each subsystem $S_k$, there exists an ideal gain matrix $K_k^{*\mathsf{T}}(\alpha(t))=[0,~0,~ k_{i,k}^{*\mathsf{T}}(\alpha(t))]$, that results in perfect matching between the reference model (\ref{eqn_dgsmrac8}) and the plant (\ref{eqn_dgsmrac6}) such that
\begin{equation}\label{eqn_dgsmrac101}
\begin{array}{l}
A_{m,k}(\alpha(t))=A_k(\alpha(t))+b_k K_k^{*\mathsf{T}}(\alpha(t)), ~~ \forall \alpha \in \Omega.
\end{array}
\end{equation}
\end{ass}
\medskip
\begin{rmk}\label{dgsa_rmk_a3}
The feasibility of this assumption has already been verified in \cite{PhDThesis-pakmehr-2013}, for gas turbine engine applications which we consider as the main application of this work.  For other systems, modeling and numerical studies are needed for such verification.
\end{rmk}
\begin{ass} \label{dgsa_ass4}
Let $K_k^*(\alpha(t)) \in \theta_k$ for all $\alpha \in \Omega$, where $\theta_k$ is a known convex compact set. We also assume that $K_k^*(\alpha(t))$ is continuously differentiable, and the derivative is uniformly bounded, $||\dot{K}_k^*(\alpha(t))|| \leq \bar{d}_k < \infty$ for all $\alpha \in \Omega$.
\end{ass}
\medskip
\begin{rmk}\label{dgsa_rmk_a2}
$\alpha(t)$ is defined to be $\alpha(t)=||y(t)||=||x^p(t)||$; since it is a function of endogenous variables (i.e., the plant states), its boundedness is guaranteed by boundedness of the plant states. As a result, its derivative ($\dot{\alpha}(t)=\frac{{x^{p}(t)}^\mathsf{T} \dot{x}^p(t)}{||x^p(t)||}$) is also bounded. More details can be found in \cite{gainsched-shamma-1988, research-rugh-2000, PhDThesis-pakmehr-2013, GSstability-pakmehr-2013, overview-shamma-2012}.
\end{rmk}
\begin{rmk}\label{dgsa_rmk_a21}
Compact set $\theta_k$ can be obtained by extensive numerical simulation studies of the system that the controller is being designed for. Smoothness, continuity, and differentiability of $K_k(\alpha(t))$, and also uniform boundedness of $\dot{K}_k(\alpha(t))$, can be guaranteed, by using proper design and computation process for $K_k(\alpha(t))$ (see Remark \ref{rmk_21}).
\end{rmk}
With adaptive controller (\ref{eqn_dgsmrac10}), the closed-loop form of subsystem $S_k$ becomes
\begin{equation}\label{eqn_dgsmrac12}
\begin{array}{l}
\displaystyle \delta \dot{x}_k(t) = A_{m,k}(\alpha(t)) \delta x_k(t) + b_k \tilde{K}_k^{\mathsf{T}}(t) \delta x_k(t) + b_{r_k} \delta r_k(t)+ h_k(\delta x_q(t),\alpha(t)),
\end{array}
\end{equation}
where $\tilde{K}_k(t)=K_k(t)-K_k^{*}(t)$.
The error equation in terms of state tracking error $e_k(t)=\delta x_k(t)- \delta x_{m,k}(t)$ and controller parameters is
\begin{equation}\label{eqn_dgsmrac13}
\begin{array}{l}
\dot{e}_k(t) = A_{m,k}(\alpha(t)) e_k(t)+b_k \tilde{K}_k^{\mathsf{T}}(t)\delta x_k(t) + h_k(\delta x_q(t),\alpha(t)).
\end{array}
\end{equation}
Based on the error model (\ref{eqn_dgsmrac13}), adaptive laws are presented using the Lyapunov design method. Here we consider the case that for each subsystem a single quadratic Lyapunov function exists for the error model (\ref{eqn_dgsmrac13}).
If for the Hurwitz matrices $A_{m,k}(\alpha(t))$, for each subsystem, for all $\alpha \in \Omega$, there exist a single Lyapunov matrix $P_k = P_k^{\mathsf{T}} > 0$, and a positive definite matrix $Q_k$ such that
\begin{equation}\label{eqn_dgsmrac14}
\displaystyle P_k A_{m,k}(\alpha(t)) + A_{m,k}^{\mathsf{T}}(\alpha(t)) P_k \leq -Q_k,  ~~~ \forall \alpha \in \Omega
\end{equation}
we use the following adaptive law:
\begin{equation}\label{eqn_dgsmrac15}
\displaystyle \dot{\hat{K}}_k(t) = \text{Proj}_{\Gamma} \left( \hat{K}_k(t), - \delta x_k(t) e_k^{\mathsf{T}}(t) P_k b_k\right),
\end{equation}
where $\Gamma_k=\Gamma_k^{\mathsf{T}}$. A visualization of the decentralized gain scheduled model reference adaptive control architecture is given in Figure \ref{schem_cont_subsystem}.
\begin{figure}[!ht]
\centering
\resizebox{4.5in}{!}{\includegraphics{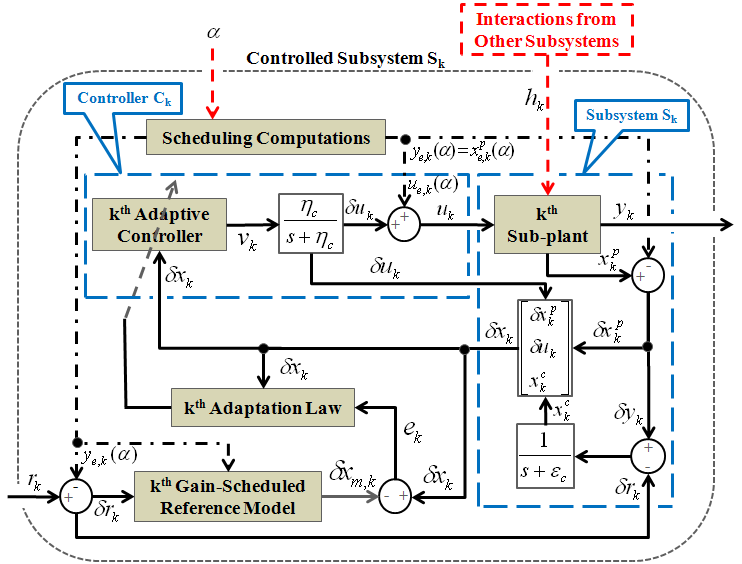}}
\caption{Architecture of decentralized adaptive control illustrated for a subsystem $S_k$}\label{schem_cont_subsystem}
\end{figure}

\begin{thm} \label{dgsa_thm5}
Consider the system $S$ consisting of $N$ interconnected subsystems $S_k$ described by (\ref{eqn_dgsmrac6}) subject to Assumption \ref{dgsa_ass2}. Consider, in addition for subsystems $S_k$, the adaptive control laws $C_k$ defined in (\ref{eqn_dgsmrac10}), with adaptive laws defined in (\ref{eqn_dgsmrac15}) subject to Assumptions \ref{dgsa_ass3} and \ref{dgsa_ass4}. Then the error signals $e_k(t)$ are uniformly ultimately bounded (UUB) for all $k=1,2,...,N$.
\end{thm}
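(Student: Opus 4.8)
The plan is to run the single-subsystem argument behind Theorem~\ref{gsa-thm5} simultaneously on all $N$ subsystems through a \emph{composite} Lyapunov function, and to absorb the interconnection terms $h_k(\delta x_q(t),\alpha(t))$ via Assumption~\ref{dgsa_ass2}. First I would take
\begin{equation*}
V(t) = \sum_{k=1}^{N} V_k(t), \qquad V_k(t) = e_k^\mathsf{T}(t) P_k e_k(t) + \mathrm{trace}\!\left( \tilde{K}_k^\mathsf{T}(t) \Gamma_k^{-1} \tilde{K}_k(t) \right),
\end{equation*}
with $P_k = P_k^\mathsf{T} > 0$ solving the LMI~(\ref{eqn_dgsmrac14}) for some $Q_k = Q_k^\mathsf{T} > 0$. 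Because the adaptive law~(\ref{eqn_dgsmrac15}) is a $\Gamma$-projection keeping $\hat K_k(t)$ in the compact set $\theta_k$ (Lemma~\ref{gsa-lem3}) and $K_k^*(\alpha(t)) \in \theta_k$ by Assumption~\ref{dgsa_ass4}, the parameter error satisfies $\|\tilde K_k(t)\| \le \bar\Theta_k$ for a known constant $\bar\Theta_k$, for all $t \ge 0$; this a priori bound on the $\tilde K_k$ part of $V$ is what lets the argument reduce to controlling the $e_k$.

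Next I would differentiate $V$ along~(\ref{eqn_dgsmrac13}). Using the LMI~(\ref{eqn_dgsmrac14}) on the $A_{m,k}$ quadratic terms, the trace identity $a^\mathsf{T} b = \mathrm{trace}(b a^\mathsf{T})$ on the cross terms $e_k^\mathsf{T} P_k b_k \tilde K_k^\mathsf{T}\delta x_k$, and Lemma~\ref{gsa-lem11} to cancel those against the adaptive-law contribution (exactly as in the passage from~(\ref{eqn_ags62}) to~(\ref{eqn_ags64})), one obtains for each $k$
\begin{equation*}
\dot V_k(t) \le -e_k^\mathsf{T}(t) Q_k e_k(t) - 2\,\mathrm{trace}\!\left( \tilde K_k^\mathsf{T}(t) \Gamma_k^{-1} \dot K_k^*(t) \right) + 2 e_k^\mathsf{T}(t) P_k\, h_k(\delta x_q(t),\alpha(t)).
\end{equation*}
The middle term is bounded in magnitude by the constant $c_k^0 := 2\bar\Theta_k \|\Gamma_k^{-1}\|\,\bar d_k$ by Assumption~\ref{dgsa_ass4}. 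For the last term, Assumption~\ref{dgsa_ass2} gives $\|h_k\| \le \sum_{q\neq k} c_{kq}\|\delta x_q(t)\|$; writing $\delta x_q(t) = e_q(t) + \delta x_{m,q}(t)$ with $\|\delta x_{m,q}(t)\| \le b_{m,q}$ (the reference models~(\ref{eqn_dgsmrac8}) being stable with bounded input, as established in the previous section) yields $2 e_k^\mathsf{T} P_k h_k \le 2\|P_k\|\,\|e_k\| \sum_{q\neq k} c_{kq}\big( \|e_q\| + b_{m,q} \big)$.

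Summing over $k$ and collecting, one arrives at $\dot V(t) \le -\zeta^\mathsf{T}(t) M \zeta(t) + d^\mathsf{T}\zeta(t) + c^0$, where $\zeta = (\|e_1\|,\dots,\|e_N\|)^\mathsf{T}$, $c^0 = \sum_k c_k^0$, $d$ is a constant vector assembled from the $\|P_k\|$, $c_{kq}$, $b_{m,q}$, and $M$ is the $N\times N$ symmetric matrix with $M_{kk} = \lambda_{\min}(Q_k)$ and $M_{kq} = -\big(\|P_k\| c_{kq} + \|P_q\| c_{qk}\big)$ for $k\neq q$. The crux of the proof — and the step I expect to be the main obstacle — is that this test matrix $M$ be positive definite, $M \ge \mu I$ with $\mu > 0$: this is the standard weak-coupling ($M$-matrix) condition of decentralized stability, and it holds when the interconnection gains $c_{kq}$ are small relative to $\lambda_{\min}(Q_k)/\|P_k\|$, so I would invoke it as a standing hypothesis on the $c_{kq}$ consistent with Remark~\ref{gsa_rmk23} (or fold it into Assumption~\ref{dgsa_ass2}). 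Granting $M \ge \mu I$, $\dot V(t) < 0$ whenever $\|\zeta(t)\|$ exceeds $\big(\|d\| + \sqrt{\|d\|^2 + 4\mu c^0}\big)/(2\mu)$, i.e. outside a compact set. Since $V$ is bounded at $t=0$ and its $\tilde K_k$ part is already bounded by projection, the usual Lyapunov boundedness/ultimate-boundedness argument gives a uniform bound on $V(t)$ for all $t\ge 0$ and convergence of $V$ into a fixed sublevel set; passing through $\lambda_{\min}(P_k)\|e_k(t)\|^2 \le V_k(t) \le V(t)$ then yields a uniform ultimate bound on each $e_k(t)$, which is the assertion. Everything outside the $M > 0$ verification is the projection/LMI bookkeeping already carried out in Lemmas~\ref{gsa_lem1} and~\ref{gsa-lem11} and in Theorem~\ref{gsa-thm5}.
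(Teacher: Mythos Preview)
Your proposal is correct and follows essentially the same route as the paper: a composite Lyapunov function $V=\sum_k V_k$, the LMI plus projection cancellation to reach $\dot V_k \le -e_k^{\mathsf T}Q_k e_k - 2\tilde K_k^{\mathsf T}\Gamma_k^{-1}\dot K_k^* + 2e_k^{\mathsf T}P_k h_k$, the decomposition $\delta x_q = e_q + \delta x_{m,q}$ to bound $h_k$, summation into a quadratic form in $\zeta=(\|e_1\|,\dots,\|e_N\|)$, and a weak-coupling positivity condition on the resulting test matrix (the paper phrases it as ``by proper selection of $Q_k$ we can make sure $\lambda_{\min}(\Pi)>0$''). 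The only cosmetic differences are that here each $\tilde K_k$ is a vector so the trace and Lemma~\ref{gsa-lem11} reduce to a scalar identity and Lemma~\ref{gsa-lem10}, and the paper works with the nonsymmetric matrix $\Pi=\Lambda-\Phi$ rather than your symmetrized $M$, which amounts to the same quadratic-form bound.
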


\begin{proof}
For subsystem $S_k$ consider the Lyapunov function candidate as
\begin{equation}\label{eqn_dgsmrac16}
\displaystyle V_k(e_k(t),\tilde{K}_k(t)) = e_k^{\mathsf{T}}(t) P_k e_k(t)+ \tilde{K}_k^{\mathsf{T}}(t) \Gamma_k^{-1} \tilde{K}_{k}(t),
\end{equation}
whose time derivative along (\ref{eqn_dgsmrac13}) and (\ref{eqn_dgsmrac15}) is
\begin{equation}\label{eqn_dgsmrac17}
\begin{array}{l}
\displaystyle \dot{V}_k(.) = e_k^{\mathsf{T}}(t) \left( P_k A_{m,k}(\alpha(t))+A_{m,k}^{\mathsf{T}}(\alpha(t))P_k \right) e_k(t)\\[5pt]
~~~~~~~~~ +2e_k^{\mathsf{T}}(t) P_k b_k \tilde{K}_k^{\mathsf{T}}(t) \delta x_k(t) + 2 \left(\tilde{K}_k^{\mathsf{T}}(t) \Gamma_k^{-1} \dot{\tilde{K}}^{k}(t) \right)\\[5pt]
~~~~~~~~~ +2e_k^{\mathsf{T}}(t) P_k h_k(\delta x_q(t),\alpha(t)).
\end{array}
\end{equation}
Using Lemma \ref{gsa_lem1}, and knowing that for scalars $a^\mathsf{T} b=b a^\mathsf{T}$, and letting $Y_{K,k}(t)=-\delta x_k(t) e_k^\mathsf{T}(t) P_k b_k$, and knowing $\dot{\tilde{K}}_k(t)=\dot{\hat{K}}_k(t)-\dot{K}_k^*(t)$ leads to
\begin{equation}\label{eqn_dgsmrac18}
\begin{array}{l}
\displaystyle \dot{V}_k(.) \leq  -e_k^\mathsf{T}(t) Q_k e_k(t) \\[5pt]
~~~~~~~~~~ + 2 \left( \tilde{K}_k^\mathsf{T}(t) \left[ \Gamma_k^{-1} \text{Proj}_{\Gamma}(\hat{K}_k(t), Y_{K,k}(t))- Y_{K,k}(t) \right] \right) \\[5pt]
~~~~~~~~~~ - 2 \left( \tilde{K}_k(t)^\mathsf{T} \Gamma_k^{-1} \dot{K}_k^*(t) \right) + 2e_k^{\mathsf{T}}(t) P_k h_k(\delta x_q(t),\alpha(t)).
\end{array}
\end{equation}
Using Lemma \ref{gsa-lem10}
\begin{equation}\label{eqn_dgsmrac19}
\begin{array}{l}
\displaystyle \dot{V}_k(.) \leq -e_k^\mathsf{T}(t) Q_k e(t) - 2 \left( \tilde{K}_k^\mathsf{T}(t) \Gamma^{-1} \dot{K}_k^*(t) \right) +2e_k^{\mathsf{T}}(t) P_k h_k(\delta x_q(t),\alpha(t)).
\end{array}
\end{equation}
From Assumption \ref{dgsa_ass2}, knowing $\delta x_q(t)=e_q(t)+ \delta x_{m,q}(t)$, and letting $\bar{x}_{m,k}=sup_t(\sum \limits^{N}_{q=1, q\neq k} c_{kq} ||\delta x_{m,q}||)$, we have
\begin{equation}\label{eqn_dgsmrac20}
\displaystyle ||h_k(\delta x_q(t),\alpha(t))|| \leq \sum \limits_{q=1, q\neq k}^{N} \left [ c_{kq} ||e_q(t)|| \right]+\bar{x}_{m,k}.
\end{equation}
Furthermore, using (\ref{eqn_dgsmrac20}) in the last term of (\ref{eqn_dgsmrac19}) results in
\begin{equation}\label{eqn_dgsmrac21}
\begin{array}{l}
\displaystyle |2e_k^{\mathsf{T}}(t) P_k h_k(\delta x_q(t),\alpha(t))| \leq 2\lambda_{\max}(P_k) ||e_k(t)|| ||h_k(\delta x_q(t),\alpha(t))|| \\[5pt]
~~~~~~~~~~~~~~~~~~~~~~~~~~~~~~~~~~ \leq 2\lambda_{\max}(P_k) ||e_k(t)|| \left(\bar{x}_{m,k}+ \sum \limits_{q=1, q\neq k}^{N} \left [ c_{kq} ||e_q(t)|| \right] \right) .
\end{array}
\end{equation}
Notice that, using Assumption \ref{dgsa_ass4}, we obtain
\begin{equation}\label{eqn_dgsmrac22}
\begin{array}{l}
\displaystyle ||\tilde{K}_k^\mathsf{T}(t) \Gamma_k^{-1} \dot{K}_k^*(t)|| \leq ||\Gamma_k^{-1}|| \max \limits_{K^*_k \in \theta_k} ||K^*_k|| \bar{d}_{k}, ~~ \forall t \geq 0.
\end{array}
\end{equation}
The following upper bound on Lyapunov function derivative for $k$th subsystem, can be found
\begin{equation}\label{eqn_dgsmrac23}
\begin{array}{l}
\displaystyle \dot{V}_k(.) \leq - \bar{\lambda}_k ||e_k(t)||^2 + \bar{\rho}_k ||e_k(t)|| \sum \limits_{q=1, q\neq k}^{N} c_{kq} ||e_q(t)||  + \bar{\xi}_k ||e_k(t)||+ \bar{\psi}_k,
\end{array}
\end{equation}
where $\bar{\lambda}_k:=(\lambda_{min}(Q_k))$, $\bar{\rho}_k:= 2\lambda_{\max}(P_k)$, $\bar{\xi}_k:= 2\lambda_{\max}(P_k) \bar{x}_{m,k}$ and $\bar{\psi}_k:= 2 ||\Gamma_k^{-1}|| \max \limits_{K^*_k \in \theta_k} ||K^*_k|| \bar{d}_{k}$ are all positive constants. Now choosing $V(.)=\sum \limits^{N}_{k=1}V_k(.)$, to show the stability of the whole system $S$, the Lyapunov function derivative for the whole system $S$ is
\begin{equation}\label{eqn_dgsmrac24}
\begin{array}{l}
\displaystyle \dot{V}(.)=\sum \limits^{N}_{k=1}\dot{V}_k(.) \\ [3pt]
~~~~~~~ \leq  \sum \limits^{N}_{k=1} ( - \bar{\lambda}_k ||e_k(t)||^2 + \bar{\rho}_k ||e_k(t)|| \sum \limits_{q=1, q\neq k}^{N} c_{kq} ||e_q(t)|| + \bar{\xi}_k ||e_k(t)||+ \bar{\psi}_k ).
\end{array}
\end{equation}
Letting $\psi:= \sum \limits^{N}_{k=1} \bar{\psi}_k$, and defining the following vectors and matrices
\begin{equation}\label{eqn_dgsmrac25}
\begin{array}{l}
\bar{e}(t):=[||e_1(t)||,...,||e_N(t)||]^{\mathsf{T}}, ~~ \xi:=[\bar{\xi}_1, ..., \bar{\xi}_N]^{\mathsf{T}},\\ [3pt]
\Phi:= \left[
       \begin{array}{ccccc}
           0                   &  \bar{\rho}_1 c_{12}   &  .  & \bar{\rho}_1 c_{1q} &  \bar{\rho}_1 c_{1N}  \\
           \bar{\rho}_2 c_{21} &  0     &  . &   . &   \bar{\rho}_2 c_{2N}   \\
           .                   &  .     & 0    & .  &  .   \\
           \bar{\rho}_k c_{k1} &  .     &  \bar{\rho}_k c_{kq} &   . &   \bar{\rho}_k c_{kN}   \\
           \bar{\rho}_N c_{N1} &  .     & \bar{\rho}_N c_{Nq}    & .      & 0   \\
       \end{array}
    \right], \\ [5pt]
\Lambda:=diag([\bar{\lambda}_1,...,\bar{\lambda}_N]), ~~ \Pi:= \Lambda - \Phi,
\end{array}
\end{equation}
the upper bound on Lyapunov function derivative becomes
\begin{equation}\label{eqn_dgsmrac26}
\begin{array}{l}
\displaystyle \dot{V}(.) \leq - \bar{e}(t)^{\mathsf{T}} \Pi \bar{e}(t) + \xi^{\mathsf{T}} \bar{e}(t) + \psi \\ [3pt]
~~~~~~~ \leq -\lambda_{\min}(\Pi) ||\bar{e}(t)||^2 + ||\xi|| ||\bar{e}(t)|| + \psi.
\end{array}
\end{equation}
By proper selection of $Q_k$ for all $k=1,...,N$, we can make sure that $\lambda_{\min}(\Pi) >0$. Having
\begin{equation}\label{eqn_dgsmrac27}
\begin{array}{l}
\displaystyle ||\bar{e}(t)|| >  \frac{||\xi||+\sqrt{||\xi||^2+4\lambda_{\min}(\Pi)\psi}}{2 \lambda_{\min}(\Pi)},
\end{array}
\end{equation}
renders $\dot{V}(.) < 0$. Hence $e_k(t)$ is UUB for all $k=1,...,N$.
\end{proof}

%%%%%%%%%%%%%%%%%%%%%%%%%%%%%%%%%%%%%%%%%%%%%%%%%%%%%%%%%%%%%%%%%%%%%%%%%%%%%%%%%%%%%%%%%%%%%%%%%%%%%%%%%%%%%%%%
%%%%%%%%%%%%%%%%%%%%%%%%%%%%%%%%%%%%%%%%%%%%%%%%%%%%%%%%%%%%%%%%%%%%%%%%%%%%%%%%%%%%%%%%%%%%%%%%%%%%%%%%%%%%%%%%
\subsection{Turboshaft Engine Example}
%%%%%%%%%%%%%%%%%%%%%%%%%%%%%%%%%%%%%%%%%%%%%%%%%%%%%%%%%%%%%%%%%%%%%%%%%%%%%%%%%%%%%%%%%%%%%%%%%%%%%%%%%%%%%%%%
We apply the developed decentralized controller to a high fidelity physics-based model of JetCat SPT5 turboshaft engine driving a variable pitch propeller developed in \cite{fitzgerald-model-2013, pakmehr-decentmodel-2011}. To show the stability of the closed-loop reference model for each subsystem, 40 different (30 equilibrium, and 10 non-equilibrium) linearizations are used, to solve inequality (\ref{eqn_gs115}); the inequality is solved in Matlab with using YALMIP \cite{YALMIP-lofberg-2004} and SeDuMi \cite{sedumi-Sturm-2001} packages. The numerical values for $Q_{Co}$, $Q_{Pr}$, and the matrices $P_{Co}$ and $P_{Pr}$ for the subsystems are
\begin{eqnarray} \label{eqn_dgsmrac_sim6}
P_{Co} = \left[
    \begin{array}{ccc}
            4.9034  &  0.9895 &  -0.6234 \\
            0.9895  &  1.7716 &  -0.1078 \\
           -0.6234  & -0.1078 &   3.4583
    \end{array}
    \right], \\[3pt]
    P_{Pr} = \left[
    \begin{array}{ccc}
         1.9015  &  0.0513  &  0.1912 \\
         0.0513  &  0.3882  & -0.0553 \\
         0.1912  & -0.0553  &  1.0811
    \end{array}
    \right],
\end{eqnarray}
where the condition numbers are $\kappa(P_{Co})=3.6384$ and $\kappa(P_{Pr})=5.1066$. $Q_{Co}=0.1 \times I_3$ and $Q_{Pr}=0.1 \times I_3$. These simulations include the control of the nominal model (NomEng), and also control of the engine with a new core (NewCore). These decentralized adaptive control case studies, simulate the engine acceleration from the idle thrust to the cruise condition and then its deceleration back to the idle condition in a stable manner, with proper tracking performance. The initial conditions for each subsystems, and the numerical values for the corresponding adaptive controllers are $x_{Co}(0)=x_{m,Co}(0) = \left[0.295 , ~0.145 , ~0 \right]^\mathsf{T}$, $x_{Pr}(0)=x_{m,Pr}(0) = \left[0.161 , ~16 , ~0 \right]^\mathsf{T}$, $\hat{K}_{Co}(0)= \left[ 0 , ~0 , ~-0.49 \right]^\mathsf{T}$, $\hat{K}_{Pr}(0)= \left[ 0 , ~0 , ~-0.49 \right]^\mathsf{T}$, $   \Gamma_{Co} =\text{diag}([40,~40,~40])$, $\Gamma_{Pr} =\text{diag}([30,~30,~30])$, $K^*_{Co} \in \theta_{k_{Co}}= \left[ [-2, ~ 0] , [-2, ~ 0] , [-2, ~ 0] \right]^\mathsf{T}$, and $K^*_{Pr} \in \theta_{k_{Pr}}= \left[ [-2, ~ 0] , [-2, ~ 0] , [-2, ~ 0] \right]^\mathsf{T}$. To simulate a new engine core, we assumed the high pressure spool inertia is $I_{hps, new}=0.8I_{hps, nom}$, where $I_{hps, nom}=4 \times 10^{-5}~(kg.m^2)$. Simulation results for this scenario are shown in figures \ref{fig_dgsmrac_02} to \ref{fig_dgsmrac_31}.
\begin{figure}[!ht]
\centering
\begin{minipage}[b]{0.49\textwidth}
\centering
\includegraphics[width=0.99\textwidth]{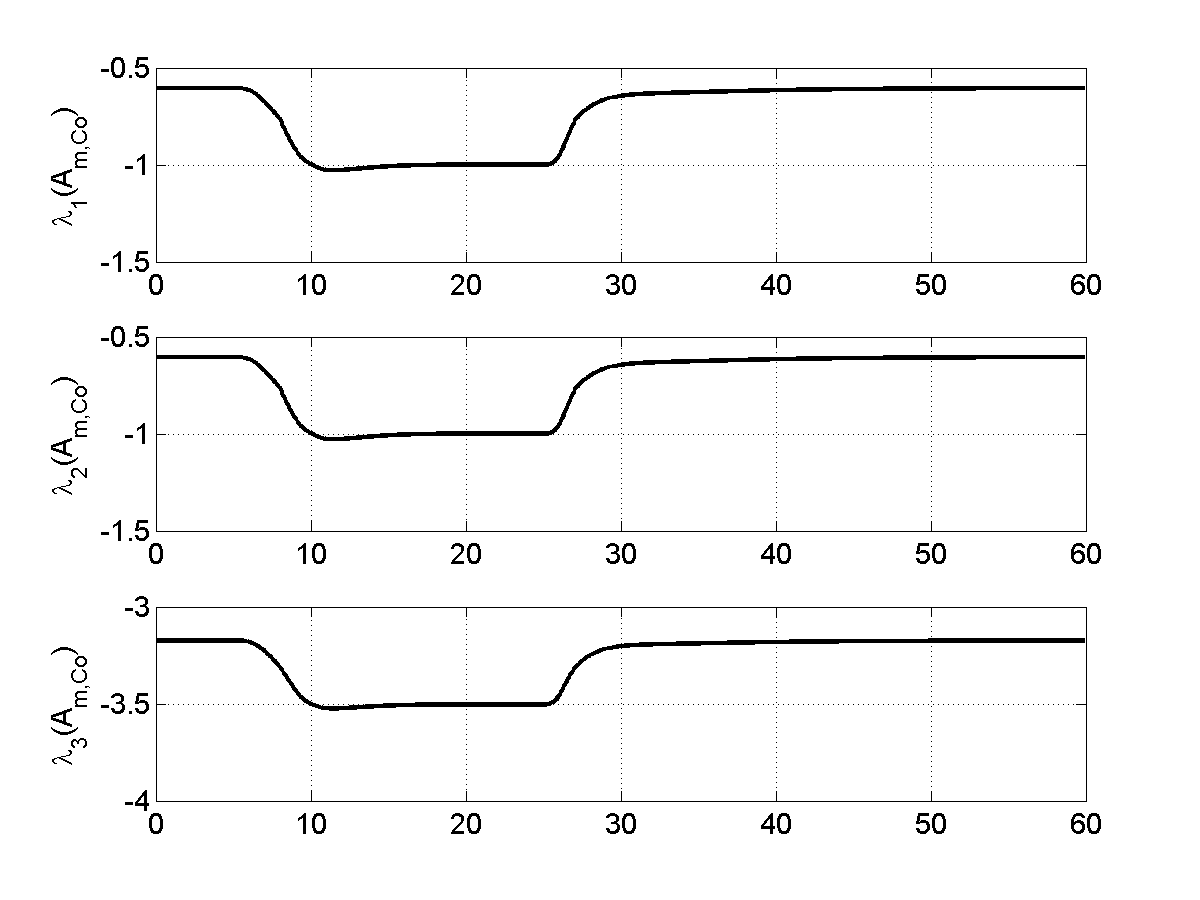}
\caption{Engine core subsys ref. model eigenvalues ($\lambda[A_{m,Co}(\alpha(t))]$)} \label{fig_dgsmrac_02}
\end{minipage}
\hfill
\begin{minipage}[b]{0.49\textwidth}
\centering
\includegraphics[width=0.99\textwidth]{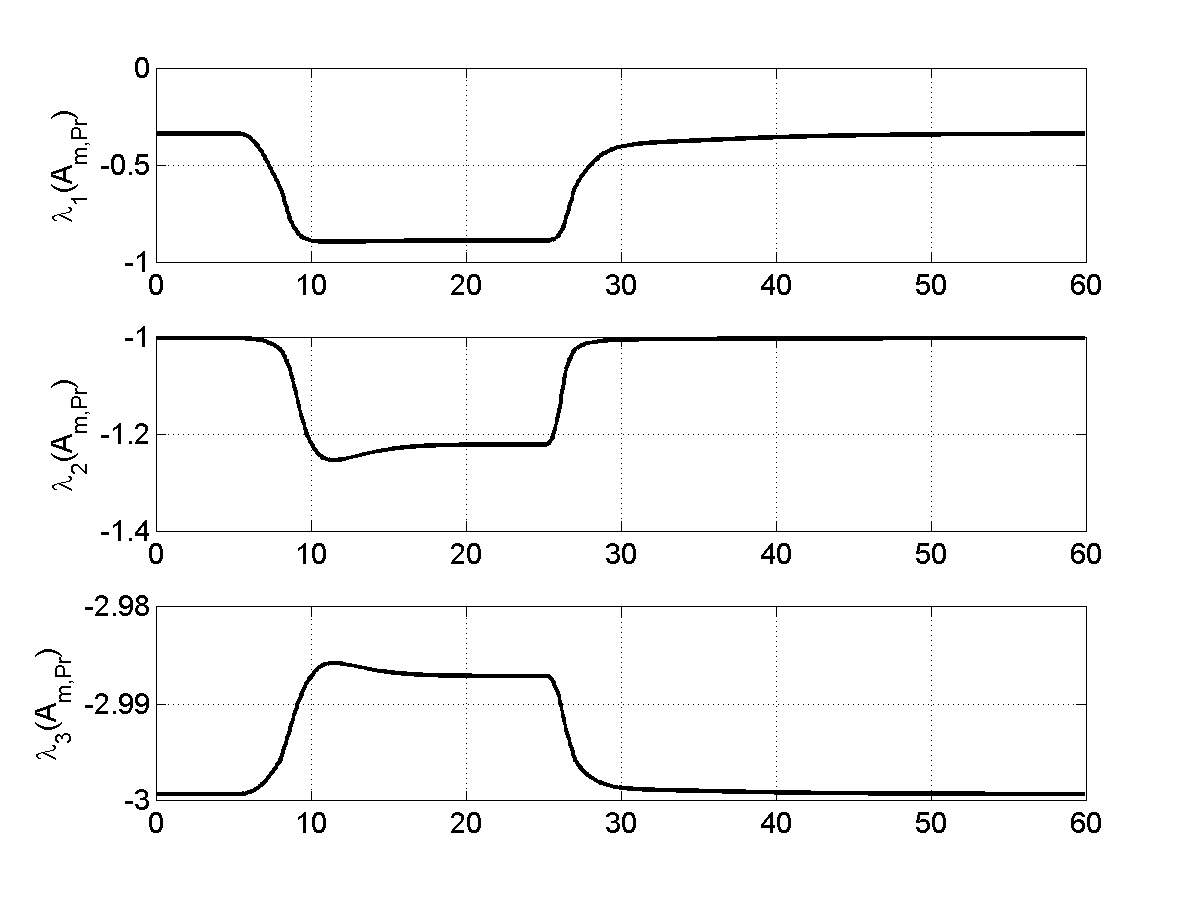}
\caption{Engine prop subsys ref. model eigenvalues ($\lambda[A_{m,Pr}(\alpha(t))]$)} \label{fig_dgsmrac_04}
\end{minipage}
\end{figure}

Figures \ref{fig_dgsmrac_02} and \ref{fig_dgsmrac_04}, show the history of the desired reference system matrix eigenvalues for the core $\lambda[A_{m,Co}(\alpha(t))]$, and prop $\lambda[A_{m,Pr}(\alpha(t))]$ subsystems. As it is apparent, all the eigenvalues remain negative with the time change of the scheduling parameter $\alpha$.

\begin{figure}[!ht]
\centering
\begin{minipage}[b]{0.49\textwidth}
\centering
\includegraphics[width=0.99\textwidth]{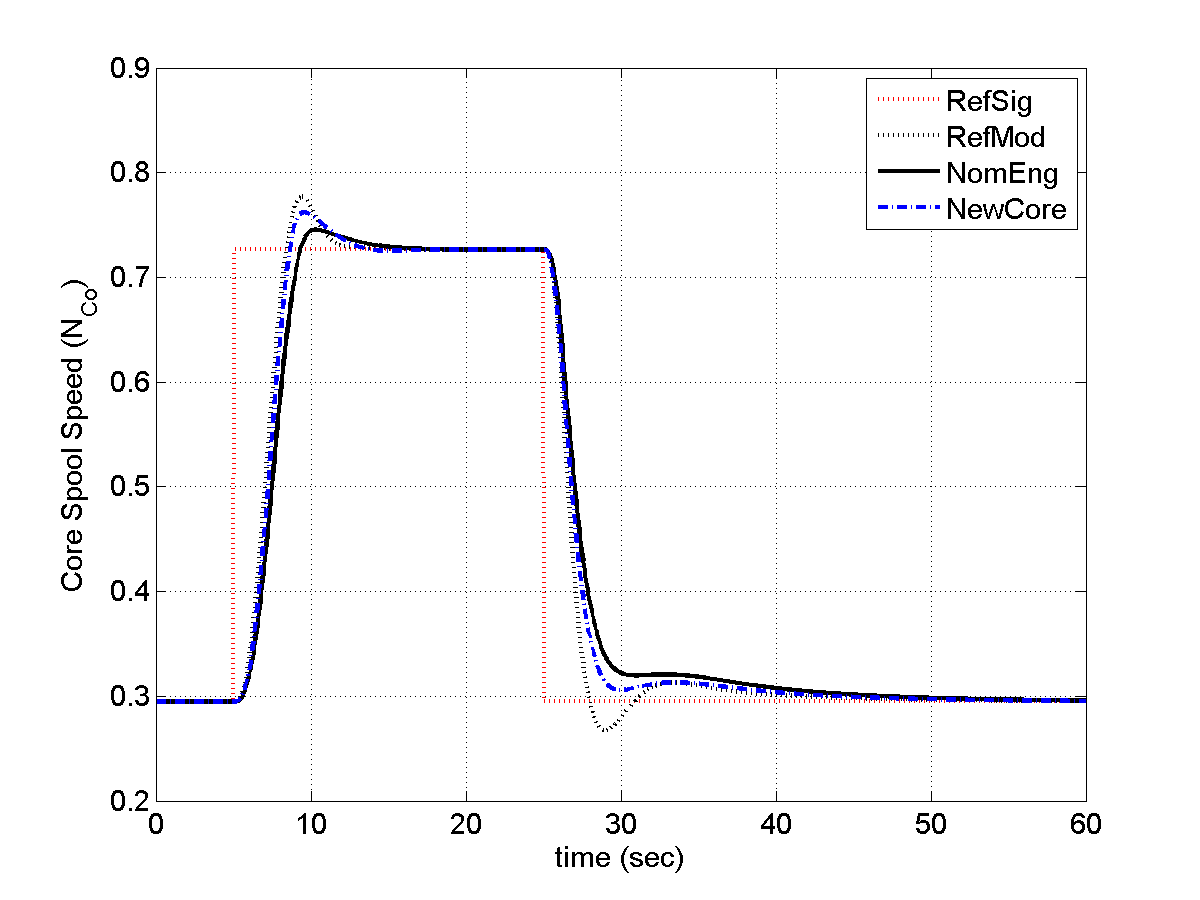}
\caption{Core spool speed and its ref. signal}\label{fig_dgsmrac_21}
\end{minipage}
\hfill
\begin{minipage}[b]{0.49\textwidth}
\centering
\includegraphics[width=0.99\textwidth]{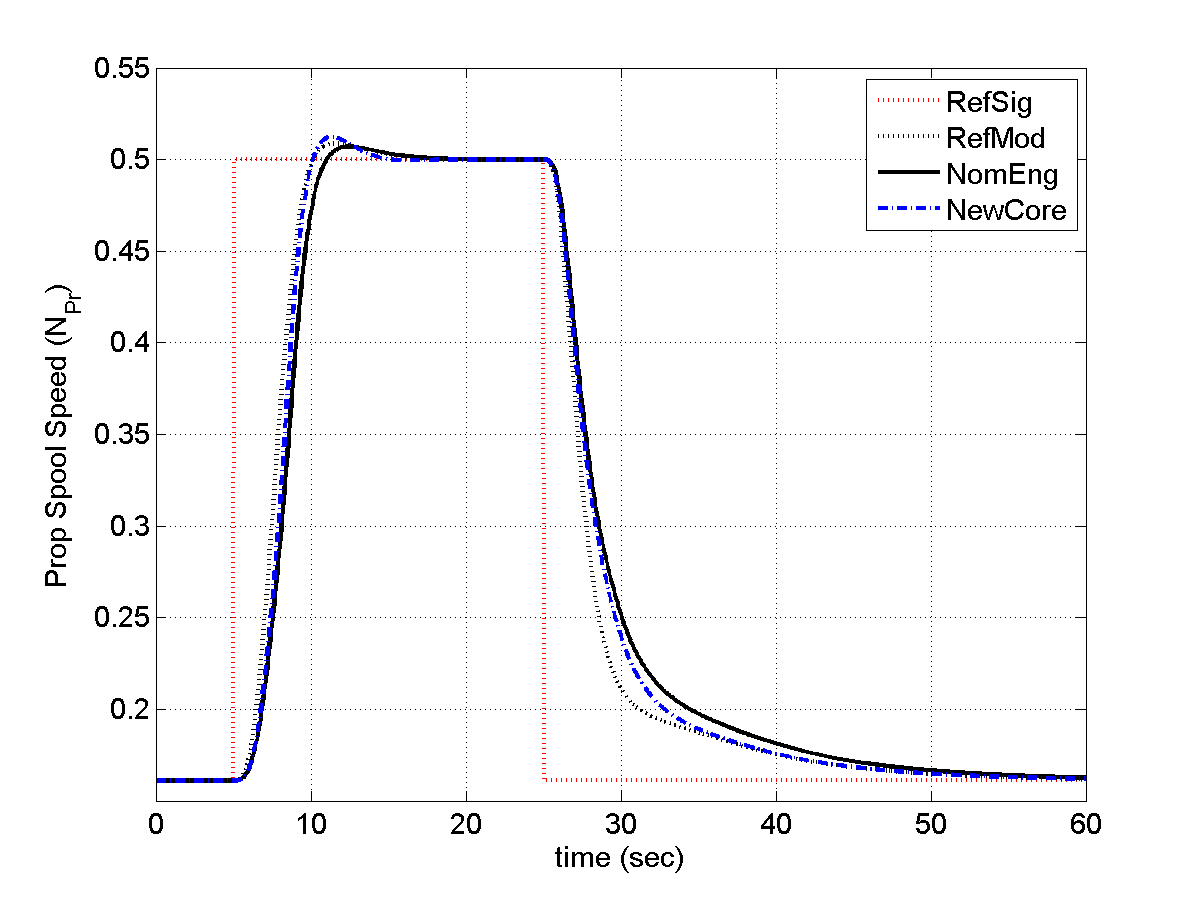}
\caption{Prop spool speed and its ref. signal}\label{fig_dgsmrac_22}
\end{minipage}
\end{figure}

Figures \ref{fig_dgsmrac_21} and \ref{fig_dgsmrac_22}, show the output of the core subsystem ($x^p_{Co}(t)$) and prop subsystem ($x^p_{Pr}(t)$) tracking their reference signals. Figure \ref{fig_dgsmrac_26}, shows the evolution of the control inputs to the augmented engine core ($v_{Co}(t)$), and prop ($v_{Pr}(t)$) subsystems, each element is corresponding to one of the control inputs to the original subsystem.

\begin{figure}[!ht]
\centering
\begin{minipage}[b]{0.49\textwidth}
\centering
\includegraphics[width=0.99\textwidth]{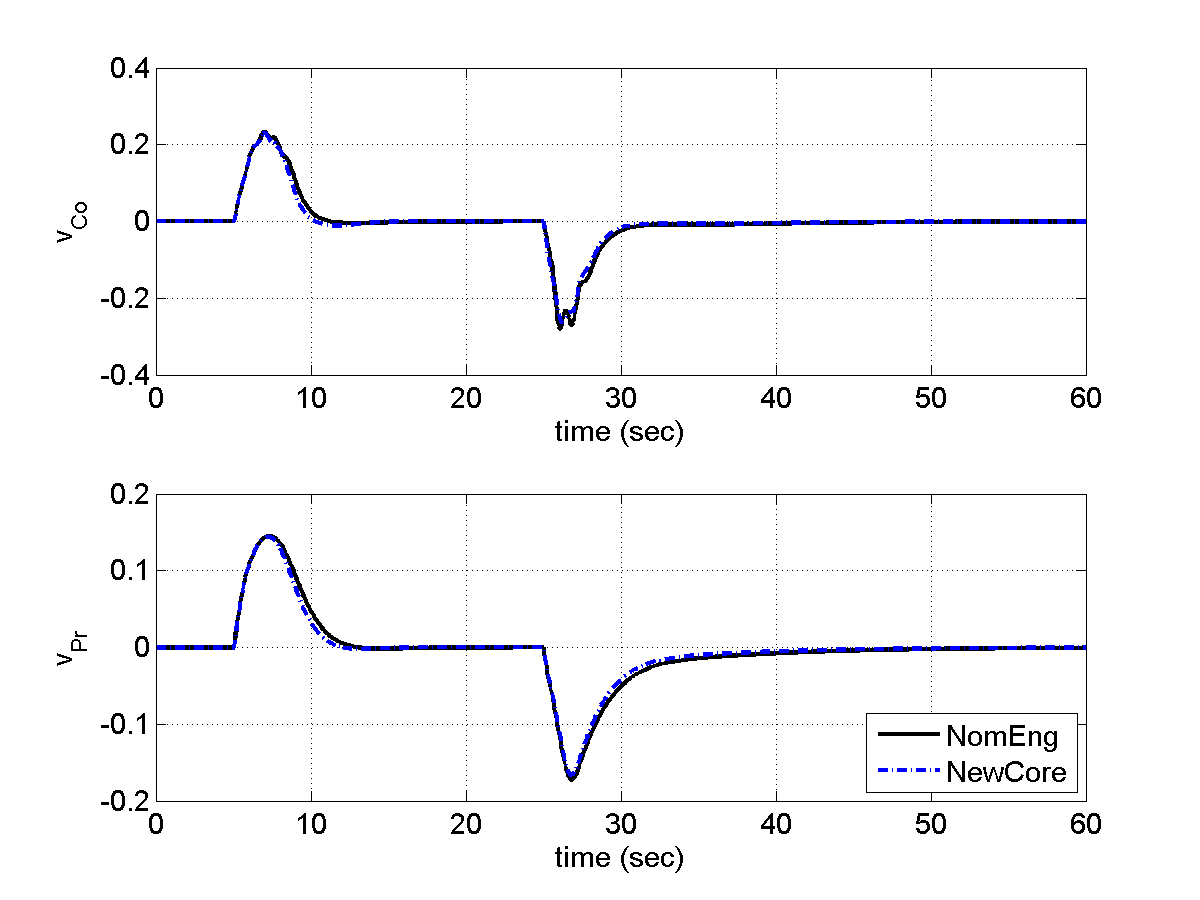}
\caption{Control inputs to the augmented engine core ($v_{Co}(t)$), and prop ($v_{Pr}(t)$) subsystems}\label{fig_dgsmrac_26}
\end{minipage}
\hfill
\begin{minipage}[b]{0.49\textwidth}
\centering
\includegraphics[width=0.99\textwidth]{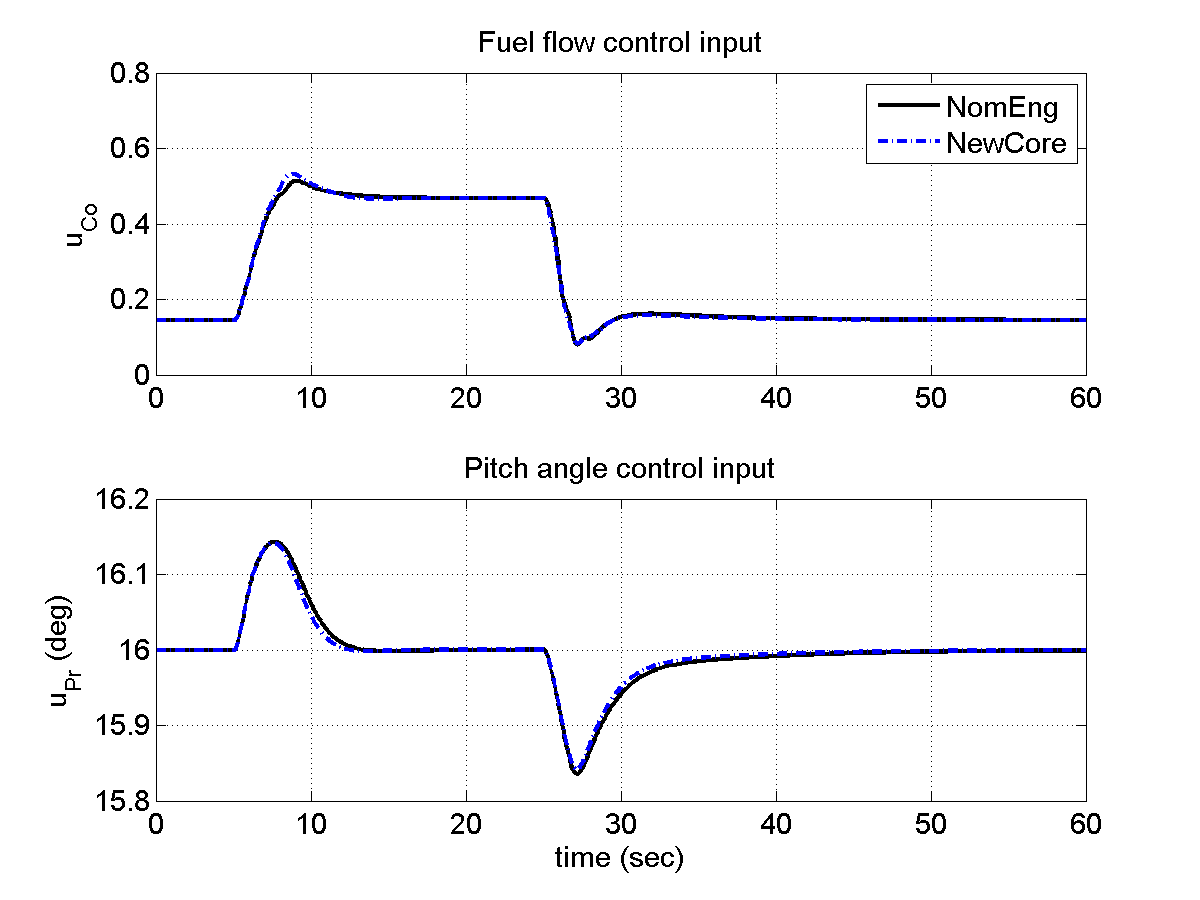}
\caption{Fuel ($u_{Co}(t)$) and prop pitch angle ($u_{Pr}(t)$) control inputs }\label{fig_dgsmrac_28}
\end{minipage}
\end{figure}

\begin{figure}[!ht]
\centering
\includegraphics[width=0.55\textwidth]{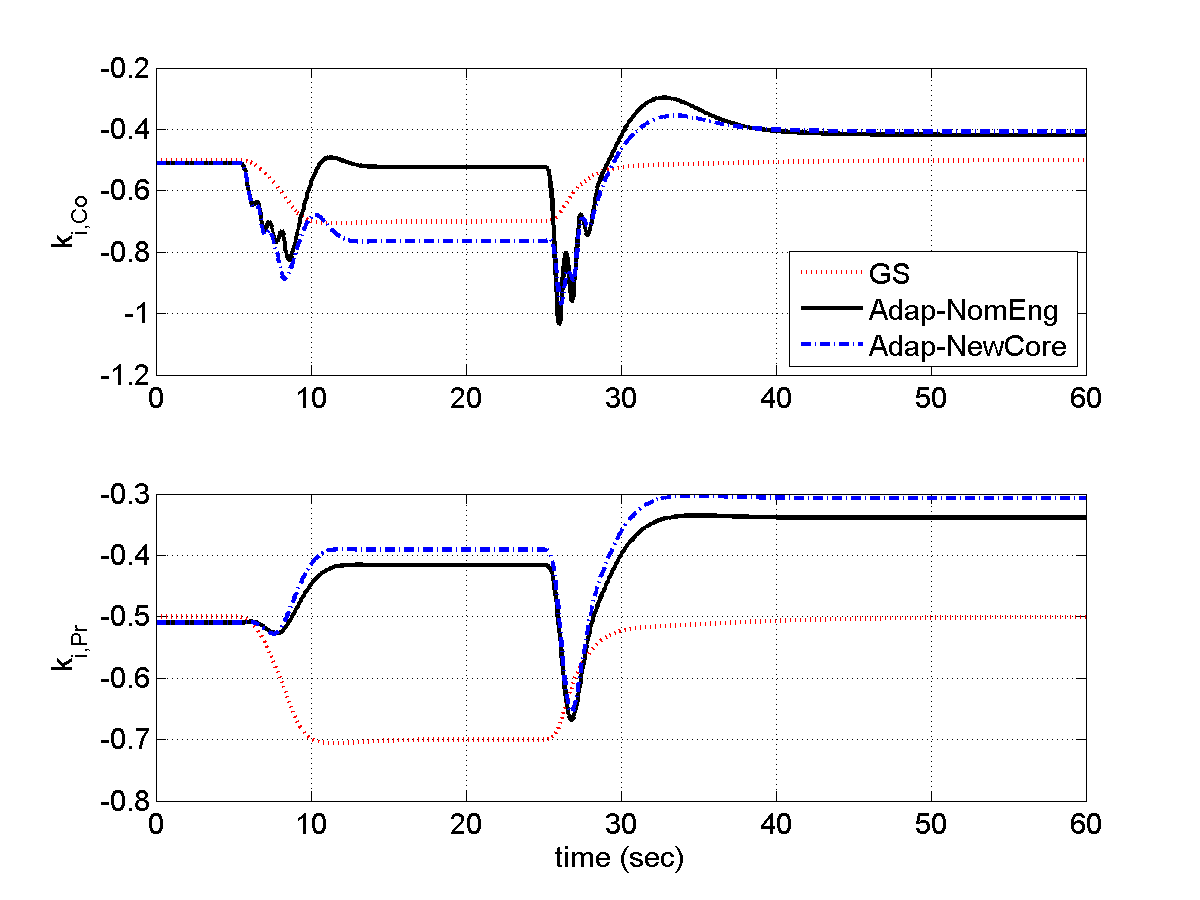}
\caption{Gain scheduled and adaptive integral gain for the engine core ($k_{i,Co}(\alpha(t))$, $\hat{k}_{i,Co}(t)$), and prop ($k_{i,Pr}(\alpha(t))$, $\hat{k}_{i,Pr}(t)$) subsystems}\label{fig_dgsmrac_29}
\end{figure}

Figure \ref{fig_dgsmrac_28}, shows the histories of fuel flow ($u_{Co}(t)$) and propeller pitch angle ($u_{Pr}(t)$) as the control inputs to each subsystem. Figures \ref{fig_dgsmrac_29}, shows gain scheduled and adaptive integral gains for the engine core ($k_{i,Co}(\alpha(t))$, $\hat{k}_{i,Co}(t)$), and prop ($k_{i,Pr}(\alpha(t))$, $\hat{k}_{i,Pr}(t)$) subsystems. The gain scheduled control gains have been obtained by interpolation using the predesigned indexed family of fixed-gain controllers, and each controller corresponds to one equilibrium point of the engine. $\hat{k}_{i,Co}(t)$ and $\hat{k}_{i,Pr}(t)$ are generated using adaptive laws designed for each subsystem.
\begin{figure}[!ht]
\centering
\begin{minipage}[b]{0.49\textwidth}
\centering
\includegraphics[width=0.99\textwidth]{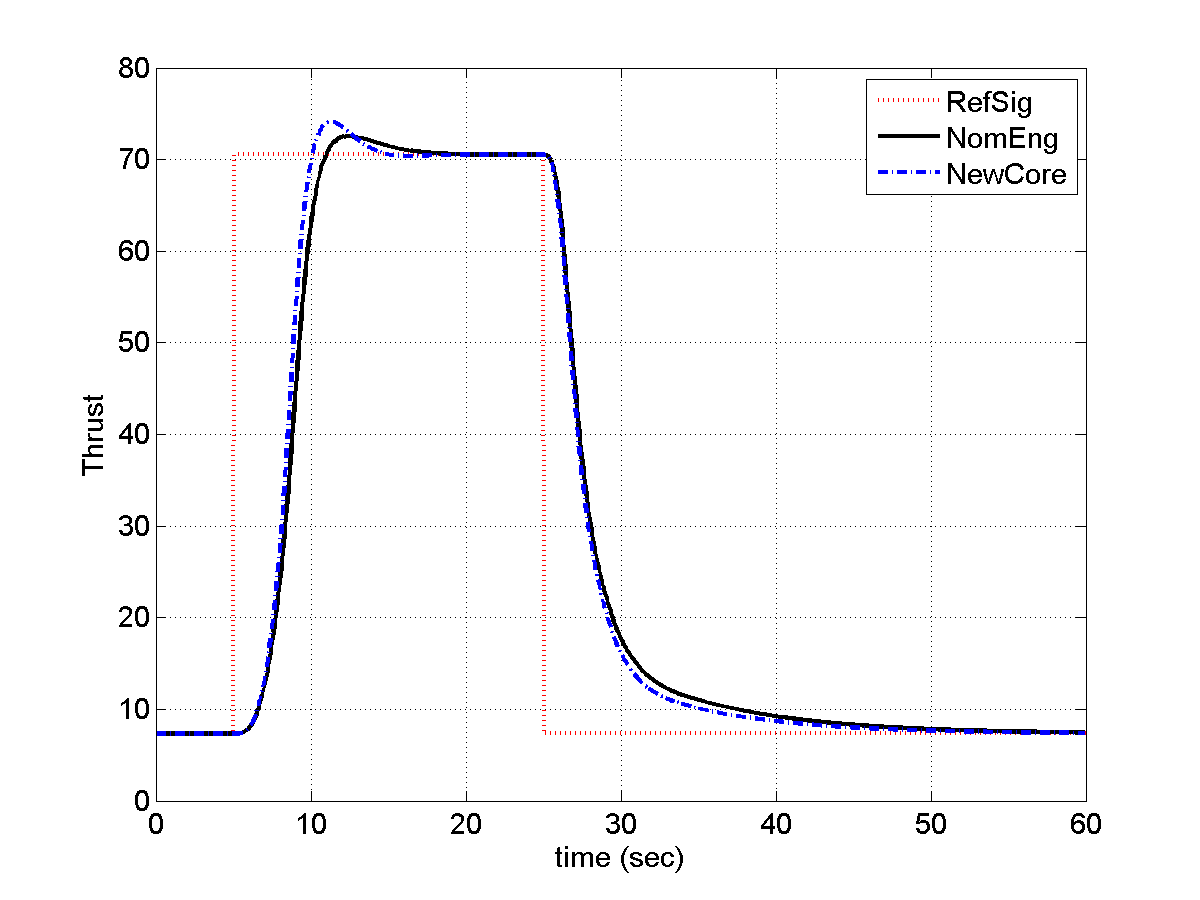}
\caption{Thrust and its ref. signal}\label{fig_dgsmrac_30}
\end{minipage}
\hfill
\begin{minipage}[b]{0.49\textwidth}
\centering
\includegraphics[width=0.99\textwidth]{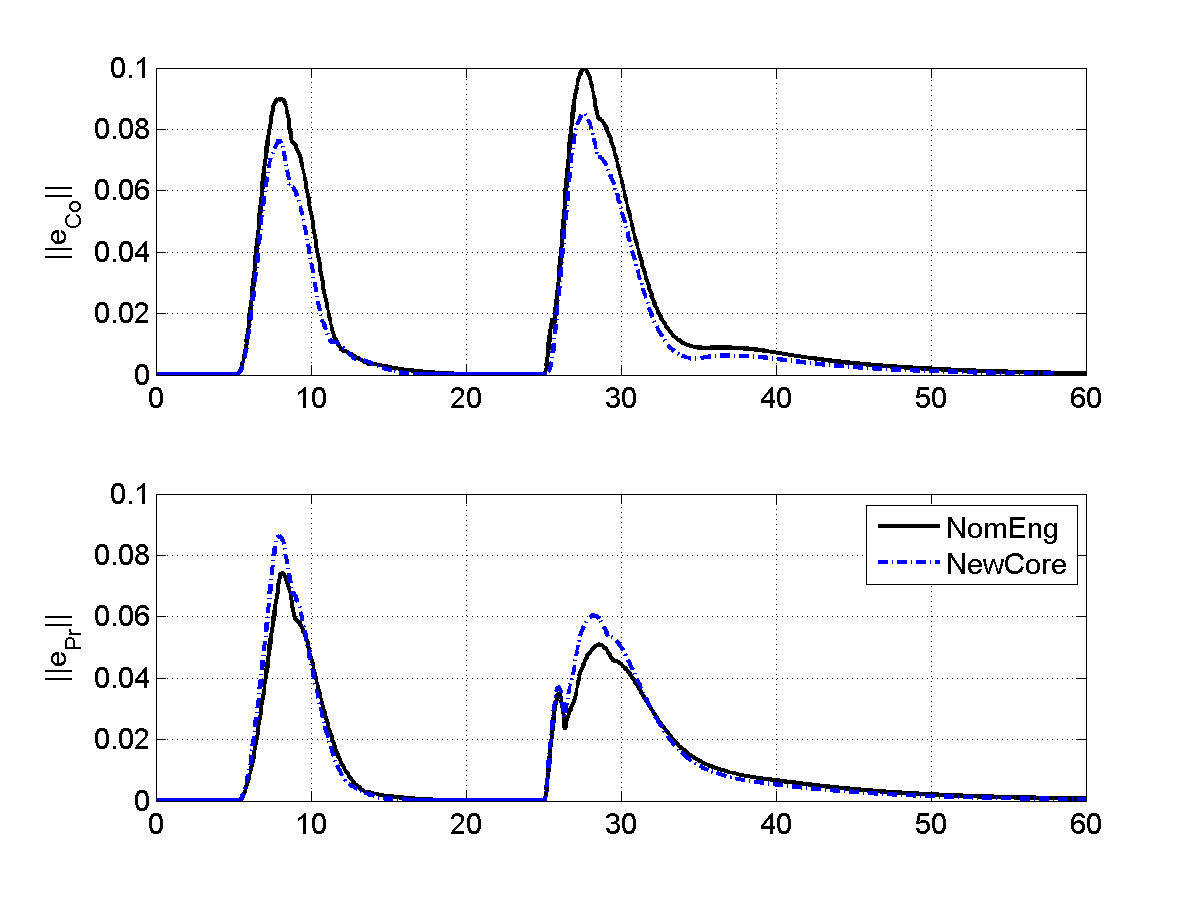}
\caption{Norm of the error signals for the engine core $||e_{Co}(t)||$, and prop $||e_{Pr}(t)||$ subsystems}\label{fig_dgsmrac_31}
\end{minipage}
\end{figure}

Figure \ref{fig_dgsmrac_30}, shows the history of thrust and it is following its reference command from idle to cruise condition and then back to the idle for standard day, sea level condition. Figure \ref{fig_dgsmrac_31}, shows the evolution of the infinity norm of the errors $||e_{Co}(t)||$ and $||e_{Pr}(t)||$. The smallness of the errors suggest that the  subsystems closely track the desired reference trajectories. It also verifies the Assumption \ref{dgsa_ass2}, which is on the boundedness of the coupling effects of the subsystems on each other, for the gas turbine engine control example.

%%%%%%%%%%%%%%%%%%%%%%%%%%%%%%%%%%%%%%%%%%%%%%%%%%%%%%%%%%%%%%%%%%%%%%%%%%%%%%%%%%%%%%%%%%%%%%%%%%%%%%%%%%%%%%%%
\newpage
\subsection{Conclusions}
%%%%%%%%%%%%%%%%%%%%%%%%%%%%%%%%%%%%%%%%%%%%%%%%%%%%%%%%%%%%%%%%%%%%%%%%%%%%%%%%%%%%%%%%%%%%%%%%%%%%%%%%%%%%%%%%
Gain scheduled reference models were developed for each subsystem of the decentralized architecture. Using convex optimization tools, a single quadratic Lyapunov function was computed for each subsystem, which guaranteed the stability of the gain scheduled gas turbine engine core and prop reference models. Rigorous stability analysis was done by proving the uniform ultimate boundedness of the error signals for all the subsystems. Sufficient conditions for uniform ultimate boundedness of the entire system were derived. Through the simulation based on a physics-based nonlinear model of a JetCat SPT5 turboshaft engine with a new core, it was demonstrated the proposed decentralized adaptive controllers track their reference models in each subsystem for the entire flight envelope of the engine.

%%%%%%%%%%%%%%%%%%%%%%%%%%%%%%%%%%%%%%%%%%%%%%%%%%%%%%%%%%%%%%%%%%%%%%%%%%%%%%%%%%%%%%%%%%%%%%%%%%%%%%%%%%%%%%%%%%%%%%%%%%%%%%%%%%%%%%%%%%%%%%%
%%%%%%%%%%%%%%%%%%%%%%%%%%%%%%%%%%%%%%%%%%%%%%%%%%%%%%%%%%%%%%%%%%%%%%%%%%%%%%%%%%%%%%%%%%%%%%%%%%%%%%%%%%%%%%%%%%%%%%%%%%%%%%%%%%%%%%%%%%%%%%%
\section*{Acknowledgments}
%%%%%%%%%%%%%%%%%%%%%%%%%%%%%%%%%%%%%%%%%%%%%%%%%%%%%%%%%%%%%%%%%%%%%%%%%%%%%%%%%%%%%%%%%%%%%%%%%%%%%%%%%%%%%%%%
This material is based upon the work supported by the Air Force Research Laboratory (AFRL) and also the National Science Foundation (NSF).

%%%%%%%%%%%%%%%%%%%%%%%%%%%%%%%%%%%%%%%%%%%%%%%%%%%%%%%%%%%%%%%%%%%%%%%%%%%%%%%%%%%%%%%%%%%%%%%%%%%%%%%%%%%%%%%%%%%%%%%%%%%%%%%%%%%%%%%%%%%%%%%%%%%%%%%%%%%%%%%%%%%%%%%%%%%%%%%%%%%%%%%%%%%%%%%%%%%%%%
%%%%%%%%%%%%%%%%%%%%%%%%%%%%%%%%%%%%%%%%%%%%%%%%%%%%%%%%%%%%%%%%%%%%%%%%%%%%%%%%%%%%%%%%%%%%%%%%%%%%%%%%%%%%%%%%%%%%%%%%%%%%%%%%%%%%%%%%%%%%%%%%%%%%%%%%%%%%%%%%%%%%%%%%%%%%%%%%%%%%%%%%%%%%%%%%%%%%%%

\bibliographystyle{plain}
%\bibliography{DistGainSchedulTurbineControl}

%%%%%%%%%%%%%%%%%%%%%%%%%%%%%%%%%%%%%%%%%%%%%%%%%%%%%%%%%%%%%%%%%%%%%%%
%\appendix       %%% starting appendix
%\section*{Appendix A: Head of First Appendix}
%Avoid Appendices if possible.

%%%%%%%%%%%%%%%%%%%%%%%%%%%%%%%%%%%%%%%%%%%%%%%%%%%%%%%%%%%%%%%%%%%%%%
%\end{multicols}

\end{document}